\documentclass[a4paper,11pt]{article}

\usepackage{amsthm,amsmath}
\usepackage[%
bookmarks=true,
ocgcolorlinks,%
linkcolor=blue,%
filecolor=blue,%
citecolor=blue,%
urlcolor=blue]{hyperref}
\usepackage{amssymb}
\usepackage{mathtools}
\usepackage{graphicx}
\usepackage{psfrag}
\usepackage{verbatim}
\newcommand{\bbM}{\mathbb{M}}
\newcommand{\bbE}{\mathbb{E}}
\newcommand{\bbV}{\mathbb{V}}

\newcommand{\Var}{\bbV{\rm ar}}
\newcommand{\bbP}{\mathbb{P}}

\newcommand{\bbR}{\mathbb{R}}

\newcommand{\wh}{\widehat}

\newcommand{\argmax}{\text{argmax}}

\newcommand{\cA}{\mathcal A}

\newcommand{\cC}{\mathcal C}

\newcommand{\cW}{\mathcal W}

\newcommand{\cE}{\mathcal E}
\newcommand{\cN}{\mathcal N}
\newcommand{\cF}{\mathcal F}
\newcommand{\cM}{\mathcal M}

\newcommand{\rmc}{{\rm c}}

\newcommand{\rmd}{{\rm d}}
\newcommand{\rme}{{\rm e}}
\newcommand{\rmB}{{\rm B}}

\newcommand{\wt}{\widetilde}

\newtheorem{theorem}{Theorem}[section]
\newtheorem{corollary}[theorem]{Corollary}
\newtheorem{lem}[theorem]{Lemma}
\newtheorem{prop}[theorem]{Proposition}
\newtheorem{thm}[theorem]{Theorem}
\newtheorem{rem}[theorem]{Remark}

\numberwithin{equation}{section}

\title{The Structure of Extreme Level Sets in Branching Brownian Motion}

\author
{Aser Cortines\thanks{aser.cortinespeixoto@math.uzh.ch, lhartung@uni-mainz.de,  oren.louidor@gmail.com.} \\Universit\"at Z\"urich \and Lisa Hartung\footnotemark[1]\\ 
Universit\"at Mainz\and Oren Louidor\footnotemark[1]\\Technion, Israel}
\date{}

\begin{document}

\maketitle

\begin{abstract}
We study the structure of extreme level sets of a standard one dimensional branching Brownian motion, namely the sets of particles whose height is within a fixed distance from the order of the global maximum. It is well known that such particles congregate at large times in clusters of order-one genealogical diameter around local maxima which form a Cox process in the limit. We add to these results by finding the asymptotic size of extreme level sets and the typical height of the local maxima whose clusters carry such level sets. We also find the right tail decay of the distribution of the distance between the two highest particles. These results confirm two conjectures of Brunet and Derrida~\cite{brunet2011branching}. The proofs rely on a careful study of the cluster distribution.
\end{abstract}

\setcounter{tocdepth}{2}
\tableofcontents


\section{Introduction and Results}
\subsection{Introduction}
\label{ss:Introduction}
This work concerns the fine structure of extreme values of branching Brownian motion. The latter describes the motion of a particle which diffuses on the real line according to a standard Brownian motion for a time whose law is exponential with mean one and then splits into two independent child particles which repeat the same procedure starting from the last position of their parent.

One way of formulating this process is as follows. Take a continuous time (binary) Galton-Watson tree $T=(T_t :\: t \geq 0)$ with branching rate $1$ and denote by $L_t$ its set of leaves at time $t$, so that $\bbE |L_t| = \rme^t$. Then conditional on $T$, let $h = (h_t(x) :\: t \geq 0,\, x \in L_t)$ be a mean-zero Gaussian process with covariance function given by
\begin{equation}
\bbE h_t(x) h_{t'}(x') = \sup \{s \geq 0 :\: x, x' \text{ share a common ancestor in } L_s\},
\end{equation}
where $t , t' \geq 0$ and $x \in L_t,\, x' \in L_{t'}$. The connection with the description above is then obtained by interpreting $L_t$ as the set of particles alive at time $t$ and $h_t(x)$ as the position of particle $x \in L_t$.

The study of extreme values of $h$ dates back to works of Ikeda et al.~\cite{Watanabe1,Watanabe2,Watanabe3}, McKean~\cite{McKean}, Bramson \cite{B_C, Bramson1978maximal} and Lalley and Sellke~\cite{LS} who derived asymptotics for the law of the
maximal height $h^*_t = \max_{x \in L_t} h_t(x)$. Introducing the centering function
\begin{equation}
\label{eq_m_t}
m_t := \sqrt{2}t - \frac{3}{2\sqrt{2}} \log^+ t \,,
\quad  \text{where } \qquad
\log^+ t := \log (t \vee 1) \,,
\end{equation}
and writing
$\wh{h}_t$ for the centered process $h_t - m_t$ and $\wh{h}^*_t : = h_t^\ast - m_t$ for its maximum, they show that
$\wh{h}^*_t$ converges in law to $G + \log Z$ as $t \to \infty$,
where $G$ is a Gumbel distributed random variable and $Z$, which is independent of $G$, is the almost sure limit as $t \to \infty$ of (a multiple of) the so-called {\em derivative martingale}:
\begin{equation}
\label{e:303}
\textstyle
Z_t := C_\diamond \sum_{x \in L_t} \big( \sqrt{2} t - h_t(x) \big) \rme^{\sqrt{2} (h_t(x) - \sqrt{2}t)} \,,
\end{equation}
for some $C_\diamond > 0$ properly chosen. Henceforth we use this unconventional normalization, to avoid carrying the constant $C_\diamond$ around in all occurrences of $Z$.

Other extreme values of $h$ can be studied simultaneously by considering the extremal process:
\begin{equation}
\label{e:0.1}
\textstyle 
\cE_t := \sum_{x \in L_t} \delta_{h_t(x) - m_t} \,.
\end{equation}
Asymptotics for this process were treated in the physics literature by, e.g., Brunet and Derrida~\cite{brunet2011branching} and more recently in the mathematical literature simultaneously by A\"id\'ekon et al.~\cite{ABBS} and Arguin et al.~\cite{ABK_E}. These works show that there exists a random point measure $\cE$ such that
\begin{equation}
\label{e:O.2}
\cE_t \Longrightarrow \cE
\quad \text{as } t \to \infty \,,
\end{equation}
in the sense of weak convergence of distributions on the space $\bbM$ of Radon measures on $\bbR$ endowed with the vague topology. The process $\cE$ turns out to be a randomly shifted clustered Poisson point process (PPP) with an exponential intensity. More explicitly, there exists a non-degenerate {\em cluster distribution} $\nu$ on the set of point measures in $\bbM$ with support in $(-\infty, 0]$, such that $\cE$ can be realized as
\begin{equation}
\label{e:5}
\textstyle
\cE: = \sum_{k \geq 1} \cC^k (\cdot - u^k),
\end{equation}
where $\big(\cC^k :\: k \geq 1 \big)$ are independently chosen according to $\nu$ and the ordered sequence $u^1>u^2>\dots$ forms the atoms of the point process $\cE^*$, whose law is determined via
\begin{equation}
\label{e:5.5}
\cE^* | Z \sim \text{PPP}\big(Z \rme^{-\sqrt{2}u} \rmd u\big) \,,
\end{equation}
with $Z$ defined as above.

For what follows in the paper we shall use a slightly stronger version of the convergence in \eqref{e:O.2}. To state it, let us first endow the set $L_t$ with the genealogical distance $\rmd = \rmd_t$ given by
\begin{equation}
\rmd (x,x') := \inf \{s \geq 0 :\: x, x' \text{ share a common ancestor in } L_{t-s}\}\ , \ \ 
\end{equation}
where $t \geq 0$ and $x,x' \in L_t$.
Then, given $x \in L_t$ and $r > 0$, we let $\cC_{t,r}(x)$ denote the (finite time, finite diameter) cluster of relative particle heights, at genealogical distance at most $r$ from $x$, defined formally as
\begin{equation}
\label{e:5B}
\textstyle
\cC_{t,r}(x) := \sum_{y \in \rmB_{r}(x)} \delta_{h_t(y) - h_t(x)},\  \text{ where} \quad
\rmB_{r}(x) := \{y \in L_t :\: \rmd (x,y) < r\} .
\end{equation}
Finally, fixing any positive function $t \mapsto r_t$ such that both $r_t$ and $t/r_t$ tend to $\infty$ as $t \to \infty$ and letting $L_t^* = \big\{x \in L_t :\: h_t(x) \geq h_t(y) \,, \forall y \in \rmB_{r_t} (x)\big\}$, we can define the \emph{generalized extremal process} $\wh{\cE}_t$ as
\begin{equation}
\label{e:N6}
\textstyle
\wh{\cE}_t := \sum_{x \in L_t^*} \delta_{h_t(x) - m_t} \otimes \delta_{\cC_{t,r_t}(x)} \,.
\end{equation}
The process $\wh{\cE_t}$, which is a random point measure on $\bbR \times \bbM$, records both the centered height of $r_t$-local maxima of $h$ and the cluster around them.

Then the proof of Theorem~2.3 in~\cite{ABK_E} readily shows that 
\begin{equation}
\label{e:N7}
\big(\wh{\cE}_t, Z_t \big) \overset{t \to \infty}{\Longrightarrow} \big(\wh{\cE}, Z\big)\,,
\quad \text{with}\qquad 
\wh{\cE}|Z \sim {\rm PPP}(Z\rme^{-\sqrt{2}u} \rmd u \otimes \nu) \,,
\end{equation}
and $Z_t$, $Z$ and $\nu$ as before. In fact, one can realize $\cE$, $\cE^*$ and $\wh{\cE}$ 
on the same probability space such that 
\begin{equation}
\label{e:N8}
\textstyle
\cE^* = \sum_{(u, \cC) \in \wh{\cE}} \, \delta_u 
 \quad \text{and} \qquad
\cE = \sum_{(u, \cC) \in \wh{\cE}} \, \cC(\cdot - u) \,, 
\end{equation}
with the sums running over all points in the support of $\wh{\cE}$. Moreover, letting 
\begin{equation}
\label{e:M1.13}
\textstyle 
\cE_t^* := \sum_{x \in L^*_t} \delta_{h_t(x) - m_t} \,,
\end{equation}
we clearly have $\cE^*_t \Longrightarrow \cE^*$ as $t \to \infty$.

This explains the clustered structure of the limit process $\cE$ as given by~\eqref{e:5}. The ``back-bone'' Poisson point process $\cE^*$ captures the asymptotics of extreme values which are also the local maxima in an $O(1)$-genealogical neighborhoods around them, while the clusters $(\cC^k :\: k\geq 1)$ describe the asymptotic law of the (relative) heights of particles in these neighborhoods. 

The validity of this description, or equivalently of relation~\eqref{e:N8} is a consequence of the following result from~\cite{ABK_G} (Theorem 2.1), which shows that particles achieving extreme height separate in the limit into clusters of diameter $O(1)$ which are $t-O(1)$ apart (in genealogical distance), namely:
\begin{equation}\label{e:108}
\varlimsup_{\substack{t \to \infty \\ r \to \infty}}
\bbP\Big( \exists  x , y \! \in \! L_t :\: h_t(x) \wedge h_t(y) \! > \! m_t \!+ v \text{ and } \rmd(x,y)\in[r,t\!- r]  \Big)\! = \!0 \,,
\end{equation} 
for all $v \in \bbR$, where $r \to \infty$ after $t \to \infty$ in the limit superior.

Naturally, the clustered structure of $\cE$ implies that its structural features will be determined by the properties of the cluster distribution $\nu$. Two different albeit equivalent descriptions of the latter have been given in~\cite{ABBS} and~\cite{ABK_E}. In~\cite{ABK_E} (Theorem 2.1, Proposition 2.9) it is described as the $t \to \infty$ limit of the configuration of heights seen from the maximal particle, when the latter is conditioned to reach the unlikely height of  $\sqrt{2}t$. The existence of this limit was first shown by Chauvin and Rouault~\cite{ChaRou1990} who described it in terms of a distinguished ``spine'' particle (see Subsection~\ref{ss:Spine}) which produces offspring at an increased rate and reaches the unusual height.  Alternative descriptions of $\nu$ are given in~\cite{ABBS} (Theorem 2.3 and Theorem 2.4) in terms of a distinguished particle moving according to a Brownian motion in a potential, from which branching Brownian motions descend and are conditioned to stay above zero.

\subsection{Results}
\label{s:Results}
In this manuscript we provide a more detailed description of the extreme level sets of branching Brownian motion, improving upon the state-of-the-art as outlined above (see also Subsection~\ref{ss:Discussion}). The term {\em extreme (super/upper) level set} will be used in this work to refer to the set of indices or heights of particles in $L_t$ whose value under $h_t$ is above $m_t + v$ for some fixed $v \in \bbR$. In light of convergence statements~\eqref{e:O.2} and~\eqref{e:N7}, such results can be stated, rather equivalently, both in an asymptotic form or directly in terms of the limiting objects. Since each form is of interest by itself, we will use both formulations. 

In what follows, we say that $f(u,v)$ converges to $F$ in the limit when $u \to u_0$ followed by $v \to v_0$, to mean that $\lim_{v \to v_0} \limsup_{u \to u_0} |f(u,v) - F| = 0$. If $f(u,v)=f_w(u,v)$ and $F=F_w$, then this converges is uniform in $w \in \cW$, if the above holds with an additional $\sup_{w \in \cW}$ before the absolute value. We write $f(u) \sim g(u)$ as $u \to u_0$ to mean that $f(u)/g(u) \to 1$ as $u \to u_0$. This should not be confused with the notation for ``is distributed according to'' which will use the same symbol. Finally, arbitrary positive constants are marked by decorated version of the letter $C$ (e.g. $C'$) and unless otherwise specified, they may change their value from one line to another.

\subsubsection{Extreme Level Sets}
\label{ss:Global}
Our first result concerns the asymptotic size of the level set of extreme values at height $m_t - v$. The following theorem confirms a conjecture by Brunet and Derrida (Subsection 4.3 in \cite{brunet2011branching}, see also Subsection~\ref{ss:Discussion} below). 
\begin{thm}\label{thm.asyden}
There exists $C_\star > 0$ such that 
\begin{equation}
\label{thm.asyden_eq1}
\frac{\cE([-v, \infty))}{C_\star Z v \rme^{\sqrt{2} v}} \overset{\bbP}{\longrightarrow} 1
\quad \text{as } v \to \infty \,.
\end{equation}
In particular, for all $\epsilon > 0$,
\begin{equation}
\label{thm.asyden_eq2}
	\lim_{v \to \infty} \limsup_{t \to \infty} 
		\bbP \left( \left | \frac{\cE_t([-v, \infty))}{C_\star Z v \rme^{\sqrt{2} v}} - 1 \right| > \epsilon \right) = 0 \,.
\end{equation}
\end{thm}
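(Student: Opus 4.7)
The plan is to use the Poisson cluster representation in \eqref{e:N7}--\eqref{e:N8}, which lets us write
\[
\cE([-v,\infty)) \;=\; \sum_{(u,\cC) \in \wh\cE}\cC([-v-u, 0]) \;=\; \sum_k C^k(v+u^k),
\qquad C^k(w):=\cC^k([-w,0]),
\]
where, conditionally on $Z$, the marked atoms $(u^k,\cC^k)$ form a PPP with intensity $Z\rme^{-\sqrt{2}u}\rmd u \otimes \nu$. I would prove \eqref{thm.asyden_eq1} as a law-of-large-numbers-type statement for this compound Poisson sum. The finite-$t$ version \eqref{thm.asyden_eq2} then follows from \eqref{thm.asyden_eq1} together with the joint vague convergence $(\wh\cE_t,Z_t)\Rightarrow(\wh\cE,Z)$, after truncating $[-v,\infty)$ to a bounded interval $[-v,K]$ on which vague continuity applies; the tail above $K$ is controlled uniformly in $t$ by tightness of $\wh h^*_t$ and contributes at most $O_\bbP(\rme^{\sqrt{2}v}) = o(v\rme^{\sqrt{2}v})$.

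The core technical input, which is also the main obstacle, is the sharp single-cluster asymptotic
\[
N(w) \;:=\; \int \cC([-w,0])\,\nu(\rmd\cC) \;\sim\; C_\star\,\rme^{\sqrt{2}w}\qquad\text{as }w\to\infty,
\]
together with a matching variance-type bound (possibly after truncating exceptionally large clusters). I would extract these by a careful analysis of $\nu$ using one of its spine representations from the literature: $\cC$ is built from a distinguished spine particle whose path is Girsanov-tilted to reach an unusual height, together with sub-BBMs that branch off it and are conditioned to stay below. The many-to-one lemma reduces $N(w)$ to an explicit integral along the spine trajectory involving Brownian one-sided survival probabilities, whose large-$w$ asymptotics yield both the exponential rate $\sqrt{2}$ and the precise prefactor $C_\star$. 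Pinning down this prefactor---not just the exponential rate---is the delicate step and ultimately fixes the constant in the theorem.

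Granted the cluster estimates, \eqref{thm.asyden_eq1} is obtained by truncation. Fix a large $R$ and decompose $\cE([-v,\infty))=S_v^{\le R}+S_v^{>R}$ with $S_v^{\le R}:=\sum_k C^k(v+u^k)\mathbf{1}\{u^k\le R\}$. Conditioning on $Z$, Campbell's formula and the substitution $w=v+u$ give
\[
\bbE\big[S_v^{\le R}\,\big|\,Z\big] \;=\; Z\rme^{\sqrt{2}v}\int_0^{v+R}\rme^{-\sqrt{2}w}N(w)\,\rmd w \;\sim\; C_\star Z(v+R)\rme^{\sqrt{2}v};
\]
the decisive linear-in-$v$ factor emerges precisely from the exact cancellation $\rme^{-\sqrt{2}u}\cdot\rme^{\sqrt{2}(v+u)}=\rme^{\sqrt{2}v}$ combined with the length of the integration interval, which simultaneously explains why the conjectured extra $v$ factor is present and why the unconditional expectation is infinite. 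An analogous conditional-variance computation together with Chebyshev yields $S_v^{\le R}/(C_\star Zv\rme^{\sqrt{2}v})\to 1$ in probability as $v\to\infty$ for each fixed $R$. For the remainder $S_v^{>R}$, the expected number of atoms with $u^k>R$ is $Z\rme^{-\sqrt{2}R}/\sqrt{2}$, and each such cluster contributes $O_\bbP(\rme^{\sqrt{2}(v+u^1)})$ by the first-moment bound applied to the largest atom $u^1$; taking $R\to\infty$ after $v\to\infty$ makes this contribution $o(v\rme^{\sqrt{2}v})$, completing the proof.
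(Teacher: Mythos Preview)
Your proposal is correct and follows essentially the same route as the paper. The paper isolates the two cluster-moment estimates you identify as the ``core technical input'' as Proposition~\ref{p:12} (first moment $\sim C_\star\rme^{\sqrt{2}w}$ and second moment $\le C(w+1)\rme^{2\sqrt{2}w}$, proved exactly via the spine/many-to-one analysis you describe), and then runs the same Campbell--Chebyshev argument conditional on~$Z$ (Lemma~\ref{l:8.1}). Two cosmetic differences: the paper lets the upper truncation level grow, taking $z=\sqrt{\log v}$ so that $F_v(z,\infty)=0$ eventually almost surely, whereas your fixed-$R$ scheme handles $S_v^{>R}$ by a first-moment bound conditional on the atom locations---both work, and in fact your argument already gives $S_v^{>R}=o_\bbP(v\rme^{\sqrt{2}v})$ for any fixed $R$, so the final ``$R\to\infty$'' is not needed. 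For the finite-$t$ statement, the paper passes through $\wh\cE_t$ and invokes the genealogical separation~\eqref{e:108} to identify $\cE_t([-v,\infty))$ with $\cE_t([-v,\infty);[-v,\infty))$; your alternative of truncating $[-v,\infty)$ to $[-v,K]$ and using vague convergence of $\cE_t$ directly, together with uniform tightness of $\wh h_t^*$ for the tail, is a legitimate and slightly cleaner variant---just note that you still need the \emph{joint} convergence with $Z$ (or $Z_t$), which the paper extracts from~\eqref{e:N7}.
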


The asymptotic growth rate (as $v \to \infty$) of the number of points in $\cE$ should be compared with the growth rate of the number of points in the process $\cE^*$, which records the limit of only those extreme values which are also local maxima. It follows from~\eqref{e:5.5} and 
a simple application of the weak law of large numbers that
\begin{equation}
\label{e:120}
\begin{split}
&\frac{\cE^*([-u, \infty))}{Z \rme^{\sqrt{2} u}/\sqrt{2}} 
	\overset{\bbP}{\underset{u \to \infty} \longrightarrow}  1 \\
& \qquad \text{and}  \qquad
\lim_{u \to \infty}\limsup\limits_{t \to \infty}
		\bbP \left( \left | \frac{\cE^*_t([-u, \infty))}{Z \rme^{\sqrt{2} u}/\sqrt{2}} - 1 \right| > \epsilon \right) = 0 \,.
\end{split}
\end{equation}
The above shows that points coming from the clusters around extreme local maxima account for an additional multiplicative linear prefactor in the overall growth rate of extreme values.

This gives rise to the following natural question: What is the ``typical'' height of those local maxima in $\cE^*_t|_{[-v,\infty)}$ whose cluster points ``carry'' the level set $\cE_t|_{[-v, \infty)}$? As the next theorem shows, the contribution is essentially uniform across all heights in $[-v, \infty)$. For a precise statement, recall~\eqref{e:N8}, then given a Borel set $B \subseteq \bbR$ define,
\begin{equation}
\label{e:421}
\begin{split}
\cE (\cdot \;; B) &:= \sum\limits_{(u,\cC) \in \wh{\cE}} \cC ( \cdot - u) 1_{\{u \in B\}} \\
& \quad \text{ and} \quad 
\cE_t (\cdot \;; B) := \sum\limits_{(u,\cC) \in \wh{\cE}_t} \cC ( \cdot - u) 1_{\{u \in B\}} \,.
\end{split}
\end{equation}
Then,
\begin{thm}
\label{t:14}
Fix any $\alpha \in (0,1]$. Then as $v \to \infty$, 
\begin{equation}
\label{e:37}
\frac{\cE\big([-v,\infty) ;\; [-\alpha v, \infty) \big)}{
\cE \big([-v,\infty)\big)} \overset{\bbP}{\longrightarrow} \alpha \,.
\end{equation}
In particular, 
\begin{equation}
\label{e:624}
\lim_{v \to \infty}\limsup_{t \to \infty}
	\bbP \left( \left| \frac{\cE_t\big([-v,\infty) ;\; [-\alpha v, \infty) \big)}{
	\cE_t \big([-v,\infty)\big)} - \alpha \right| > \epsilon \right) = 0 \,.
\end{equation}
\end{thm}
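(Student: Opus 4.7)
The plan is to establish the limit-process version \eqref{e:37} first, and then derive the finite-time version \eqref{e:624} from it via the joint convergence in \eqref{e:N7} together with a standard continuity argument (truncating on an event $\{Z \in (\delta,\delta^{-1})\}$ and using the vague convergence of $\wh{\cE}_t$ to $\wh{\cE}$ against appropriate test functions).

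Working conditionally on $Z$, the plan is to use the representation
\begin{equation*}
\cE\big([-v,\infty);B\big) \;=\; \sum_{(u,\cC)\in\wh{\cE}} \mathbf{1}_{\{u\in B\}}\, \cC\big([-v-u,0]\big) \,,
\end{equation*}
provided by \eqref{e:N8} and \eqref{e:421}, so that both the numerator and denominator of the ratio in \eqref{e:37} are Poisson sums against $\wh{\cE}|Z \sim {\rm PPP}(Ze^{-\sqrt 2 u}\rmd u\otimes \nu)$ that differ only by the restriction $\{u\geq -\alpha v\}$ on the back-bone. The heuristic I would justify is that the contribution from each small window of back-bone heights $I\subseteq [-v,0]$ is asymptotically of order $C_\star Z\,|I|\,\rme^{\sqrt{2}v}$, i.e.\ uniform in the location of $I$: the typical cluster contribution at back-bone height $u$ is $\cC([-(v+u),0])$, whose conditional mean grows like $\rme^{\sqrt{2}(v+u)}$ and exactly cancels the back-bone intensity factor $\rme^{-\sqrt{2}u}$, producing an $(u$-independent$)$ density $\propto Z\rme^{\sqrt{2}v}\rmd u$. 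Integrating this density over $[-\alpha v, 0]$ (plus a negligible contribution from $u>0$, where only $O(Z)$ back-bone points live, each with cluster level set of expected size $O(\rme^{\sqrt{2}v})$) gives $\alpha C_\star Z v \rme^{\sqrt{2}v}$, which is exactly $\alpha$ times the asymptotics from Theorem~\ref{thm.asyden}.

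To make the heuristic rigorous I would adapt the proof scheme behind Theorem~\ref{thm.asyden} to arbitrary windows $I\subseteq[-v,0]$: conditionally on $Z$, the contributions from disjoint $u$-windows are independent Poisson sums, so a Chebyshev-type concentration argument reduces the problem to first- and second-moment estimates for $\cC([-w,0])$ as $w\to\infty$. Concretely, one needs (i) $\bbE_\nu \cC([-w,0]) \sim c\, \rme^{\sqrt{2}w}$ for some constant $c$ (with $C_\star$ eventually determined by $c$ and the constant from the back-bone), and (ii) a variance bound of the form $\bbE_\nu \cC([-w,0])^2 = O(\rme^{2\sqrt{2}w})$, both uniformly in $w$. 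Together with the PPP structure this will yield, for every interval $I$ with $|I|/v$ bounded away from $0$,
\begin{equation*}
\frac{\cE([-v,\infty); I)}{C_\star Z\,|I|\,\rme^{\sqrt{2}v}} \overset{\bbP}{\longrightarrow} 1 \,,
\end{equation*}
and applying this to $I=[-\alpha v,0]$ and $I=[-v,0]$ separately gives~\eqref{e:37} upon taking the ratio.

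The main obstacle I expect is the quantitative analysis of $\nu$ entering step (i)--(ii) above. The cluster distribution is only defined as an abstract limit (either the Chauvin--Rouault spine construction or the A\"id\'ekon--Berestycki--Brunet--Shi construction), so controlling $\bbE_\nu\cC([-w,0])$ and its variance sharply as $w\to\infty$ requires pulling these representations apart carefully; this is presumably where the technical core of the paper lies, and it is the same estimate that drives Theorem~\ref{thm.asyden} itself. Once this input is available, the remainder of the argument is a routine conditional second-moment computation followed by Chebyshev, and the transfer from \eqref{e:37} to \eqref{e:624} is a soft consequence of the joint convergence \eqref{e:N7}.
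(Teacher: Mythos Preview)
Your proposal is correct and follows essentially the same route as the paper: both reduce \eqref{e:37} to a conditional (on $Z$) first/second moment computation for the Poisson sum $\sum_{(u,\cC)\in\wh{\cE}} \cC([-v-u,0])\,1_{\{u\in I\}}$, using exactly the cluster moment estimates you identify (Proposition~\ref{p:12} in the paper), and then deduce \eqref{e:624} from the joint convergence \eqref{e:N7}. One minor point: the second moment bound the paper actually proves is $\bbE_\nu \cC([-w,0])^2 \le C(w+1)\rme^{2\sqrt{2}w}$, with an extra linear factor compared to your guess, but this is still amply sufficient for the Chebyshev step since the mean squared grows like $(Zv\rme^{\sqrt{2}v})^2$; the paper also trims the window to $[-\alpha v+\sqrt{\log v},\,\sqrt{\log v}]$ before applying Chebyshev, handling the two boundary strips separately by a crude first-moment bound and by the a.s.\ finiteness of $\wh{\cE}|_{[z,\infty)\times\bbM}$, which is the concrete version of your ``negligible contribution from $u>0$'' remark.
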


We can rephrase the statement in \eqref{e:624} in terms of a uniform sampling from all particles whose height is above $m_t -v$ as follows:
\begin{corollary}
\label{c:1}
Given $t, v > 0$, let $X$ be a particle chosen uniformly from all particles $x \in L_t$ satisfying $\wh{h}_t(x) \geq - v$ and set $Y := \argmax \big\{ \wh{h}_t(y) : \: y \in \rmB_{t,r_t}(X) \big\}$. Then as $t \to \infty$ followed by $v \to \infty$,
\begin{equation}
\frac{\wh{h}_t(Y) - (-v)}{v} \Longrightarrow  U([0,1]) \,.
\end{equation}
\end{corollary}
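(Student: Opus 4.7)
The plan is to read off the corollary from Theorem~\ref{t:14} by conditioning on the branching Brownian motion. Let $\cF_t$ denote the $\sigma$-algebra generated by the BBM up to time $t$. The first step is to establish the almost-sure identity
\[
\bbP\big(\wh{h}_t(Y) \geq -s \,\big|\, \cF_t\big) = \frac{\cE_t\big([-v, \infty);\, [-s, \infty)\big)}{\cE_t\big([-v, \infty)\big)}
\]
for any $s \in \bbR$, with the notation of \eqref{e:421}. Given this, I would fix $\alpha \in (0, 1]$, set $s = \alpha v$, and invoke Theorem~\ref{t:14}: the right-hand side converges in probability to $\alpha$ in the iterated limit ($t \to \infty$ followed by $v \to \infty$). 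Since the ratio lies in $[0, 1]$, bounded convergence gives $\bbP\big(\wh{h}_t(Y) \geq -\alpha v\big) \to \alpha$ in the same iterated limit, which is exactly pointwise convergence of the CDF of $(\wh{h}_t(Y) + v)/v$ at $\beta = 1 - \alpha \in [0, 1)$ to the CDF of $U([0, 1])$.

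The remaining boundary values of $\beta$ are handled separately. For $\beta < 0$ the CDF vanishes identically, since $\wh{h}_t(Y) \geq \wh{h}_t(X) \geq -v$ by construction. For $\beta > 1$ it tends to $1$ by tightness of $\wh{h}_t^*$ (which converges to $G + \log Z$): $\bbP\big(\wh{h}_t(Y) \geq (\beta - 1) v\big) \leq \bbP\big(\wh{h}_t^* \geq (\beta - 1) v\big) \to 0$ as $v \to \infty$. The case $\beta = 1$ (which corresponds to the endpoint $\alpha = 0$ not directly covered by Theorem~\ref{t:14}) follows by monotonicity: $\bbP\big(\wh{h}_t(Y) \geq 0\big) \leq \bbP\big(\wh{h}_t(Y) \geq -\alpha v\big)$ for every $\alpha > 0$ and all $v$ large, and sending $\alpha \to 0$ after the iterated limit yields convergence to $0$. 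Together these give convergence of the CDFs at every real point, which is the claimed weak convergence.

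The main obstacle is the initial identity, because $Y$ is defined as the maximizer of $\wh{h}_t$ over $\rmB_{r_t}(X)$ and is not by definition an $r_t$-local maximum, whereas the clusters on the right-hand side are indexed by the $r_t$-local maxima in $L_t^*$. I would handle this using \eqref{e:108} together with the ultrametricity of $\rmd_t$: for any $\eta > 0$, on an event of probability at least $1 - \eta$ (for $t$ large and some fixed $r$), all particles with $\wh{h}_t \geq -v$ partition into clusters of $\rmd_t$-diameter $\leq r$ pairwise separated by distance $\geq t - r$. Choosing $r \ll r_t \ll t$, the ultrametric triangle inequality forces $\rmB_{r_t}(X) = \rmB_{r_t}(X^*)$ whenever $X$ and $X^*$ lie in the same cluster, so that $Y(X)$ coincides with the cluster's local maximum $X^* \in L_t^*$. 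The counting on the two sides of the identity then matches up exactly on this event, and the negligible exceptional contribution is absorbed by the bounded convergence step.
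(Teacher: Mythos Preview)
Your approach is correct and is precisely the derivation the paper intends: it states the corollary as a direct rephrasing of~\eqref{e:624} and gives no separate proof.

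One simplification: the ``main obstacle'' you flag is not an obstacle at all, and you do not need~\eqref{e:108}. The genealogical distance $\rmd_t$ is an ultrametric, so any two open balls of radius $r_t$ are either equal or disjoint. In particular, since $Y\in\rmB_{r_t}(X)$ we have $\rmB_{r_t}(Y)=\rmB_{r_t}(X)$, and hence $Y$ is the maximizer over its own ball, i.e.\ $Y\in L_t^*$ automatically (almost surely, ties having probability zero). Conversely, the balls $\{\rmB_{r_t}(x^*):x^*\in L_t^*\}$ partition $L_t$, and each particle $x$ with $\wh h_t(x)\ge -v$ is counted exactly once in the cluster of its unique local maximum $Y(x)$. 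This gives the identity
\[
\bbP\big(\wh h_t(Y)\ge -\alpha v\,\big|\,\cF_t\big)
=\frac{\cE_t\big([-v,\infty);[-\alpha v,\infty)\big)}{\cE_t\big([-v,\infty)\big)}
\]
exactly, for every $t$, with no exceptional event to control. The rest of your argument (Theorem~\ref{t:14}, bounded convergence, and monotonicity for the endpoints) then goes through verbatim; in fact convergence of the distribution function at all $\beta\in(0,1)$ already forces convergence at the remaining points by monotonicity, so the separate boundary analysis is optional.
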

Roughly speaking, for each $u \in [O(1), v]$ the total contribution to the level set $\cE_t\big([-v, \infty)\big)$ from clusters around local maxima at height $m_t - u$ is uniformly $\sim C_\star Z \rme^{\sqrt{2}v}$, making the total size of the level set $\sim C_\star Z v\rme^{\sqrt{2}v}$ in agreement with Theorem~\ref{thm.asyden}. 

Lastly, we find the rate of decay of the right tail probabilities of the distance between the maximum and the second maximum particles in $h_t$, thereby confirming another conjecture of Brunet and Derrida (Subsection~4.2 in~\cite{brunet2011branching}). Setting $h_t^{*(2)} := \max \big\{h_t(x) :\: x \in L_t ,\, h_t(x) < h^*_t \big\}$, we have
\begin{thm}
\label{t:2.9}
Let $v^1 > v^2 > \dots$ be the ordered atoms of $\cE$. Then
\begin{equation}\label{equation.right.tail.statistics.BBM}
	\lim_{w \to \infty} w^{-1} \log \bbP \big( v^1 - v^2 > w) = -(2+\sqrt{2}) \,.
\end{equation}
In particular, 
\begin{equation}
\label{e:33}
\lim_{w \to \infty} \limsup_{t \to \infty} \Big| w^{-1} \log \bbP \big(  h^*_t - h_t^{*(2)} > w \big) + (2+\sqrt{2}) \Big| = 0 \,.
\end{equation}
\end{thm}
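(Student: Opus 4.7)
My plan is to exploit the clustered Poisson structure of $\cE$ from \eqref{e:5}--\eqref{e:5.5} to decompose $\{v^1-v^2>w\}$ into two independent events, compute the rate of each separately, and then transfer the conclusion to the finite-$t$ statement \eqref{e:33}.

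First I would observe that, since every cluster $\cC^k$ is supported on $(-\infty,0]$ with a unit atom at $0$, the top atom of $\cE$ is $v^1=u^1$. Writing $D\in(0,\infty]$ for the distance from $0$ to the second largest atom of $\cC^1$ (with $D=+\infty$ if $\cC^1=\delta_0$), one has $v^2=\max\{u^1-D,\,u^2\}$ and hence
$$v^1-v^2=\min\{D,\,u^1-u^2\}.$$
Because the marking of local maxima by clusters is independent of the Cox PPP, $D$ is independent of $(u^1,u^2)$, and so $\bbP(v^1-v^2>w)=\bbP(D>w)\,\bbP(u^1-u^2>w)$. Conditionally on $Z$, the map $u\mapsto Ze^{-\sqrt{2}u}/\sqrt{2}$ sends $\cE^*$ to a standard PPP on $(0,\infty)$; the event $\{u^1-u^2>w\}$ then reads as the two smallest atoms having ratio exceeding $e^{\sqrt{2}w}$, and a short computation with two independent $\text{Exp}(1)$ variables gives
$$\bbP(u^1-u^2>w\mid Z)=e^{-\sqrt{2}w},$$
independently of $Z$. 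The theorem therefore reduces to showing $\lim_{w\to\infty}w^{-1}\log\bbP(D>w)=-2$.

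The cluster-gap rate is the crux of the argument and would build on the fine analysis of $\nu$ carried out elsewhere in the paper. For the upper bound I would use the Chauvin--Rouault / Girsanov spinal description of $\nu$: conditionally on reaching a very high level, the distinguished particle follows a drifted trajectory from which independent decoration-BBMs branch off at rate $2$. The event $\{D>w\}$ forces every decoration to place all of its particles strictly below $-w$, and a union bound over the branching times on the spine combined with Bramson's sharp upper tail for the BBM maximum $\bbP(\wh{h}^*_s>y)\lesssim y\,e^{-\sqrt{2}y}$ should give $\bbP(D>w)\lesssim w^{C} e^{-2w}$. For the matching lower bound I would construct an explicit scenario of probability at least $e^{-(2+o(1))w}$ in which the spine follows its typical trajectory and every decoration fails to reach height $-w$, via a second-moment / Paley--Zygmund argument applied to the decorations along the spine.

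Finally, the finite-$t$ statement \eqref{e:33} would follow from the limit \eqref{equation.right.tail.statistics.BBM} together with the convergence $\wh{\cE}_t\Rightarrow\wh{\cE}$ in \eqref{e:N7}: this yields $h^*_t-h^{*(2)}_t\Rightarrow v^1-v^2$, so $\lim_{t\to\infty}\bbP(h^*_t-h^{*(2)}_t>w)=\bbP(v^1-v^2>w)$ for each fixed $w$, and sending $w\to\infty$ gives \eqref{e:33}. The \textbf{main obstacle} throughout is the cluster-gap analysis: qualitative information such as \eqref{e:108} is not sharp enough to pin down the exponent $-2$, so one needs quantitative control of $\nu$ --- both to bound the total contribution of infinitely many decorations on the spine (upper bound) and to make the lower-bound construction robust enough to survive the conditioning on the spine's trajectory.
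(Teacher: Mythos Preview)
Your reduction of $\{v^1-v^2>w\}$ to the product $\bbP(u^1-u^2>w)\,\bbP(D>w)$ via independence of clusters and backbone is exactly the paper's approach, and your computation of $\bbP(u^1-u^2>w)=e^{-\sqrt{2}w}$ (by mapping $\cE^*$ to a standard PPP) is a clean variant of the paper's direct integral. The transfer to \eqref{e:33} via $\wh{\cE}_t\Rightarrow\wh{\cE}$ is also what the paper does, modulo the minor point that weak convergence only gives $\bbP(h^*_t-h^{*(2)}_t>w)\to\bbP(v^1-v^2>w)$ at continuity points of the limiting law; the paper closes this gap by monotonicity in $w$.

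Where your sketch departs from the paper, and runs into trouble, is the cluster-gap rate $\lim w^{-1}\log\bbP(D>w)=-2$ (Proposition~\ref{prop:right.tail.cluster} in the paper). Your proposed \emph{lower bound} scenario --- ``the spine follows its typical trajectory and every decoration fails to reach height $-w$'' --- does not produce the exponent $-2$. Under the conditioning, the spine is at height $-O(\sqrt{s})$ at backward time $s$, so decorations branching at early times $s=O(1)$ must each have their maximum below roughly $-w$; by the uniform left tail in Lemma~\ref{l:TailBBMMax} this costs at least $e^{-cw}$ per decoration, and there are order $w^2$ of them before the spine reaches depth $w$. The paper's lower bound instead forces the spine to \emph{not branch at all} until time $w/2$ and to be at height $-w$ by then: the no-branching cost is $e^{-2\cdot w/2}=e^{-w}$ and the Gaussian cost for the spine is $e^{-w^2/(2\cdot w/2)}=e^{-w}$, giving $e^{-2w}$. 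Your \emph{upper bound} sketch is also off: $\{D>w\}$ is an intersection over branching times, so a union bound is not the right tool, and the relevant input is the \emph{lower} tail of the decoration maximum, not the upper tail $\bbP(\wh{h}^*_s>y)$. The paper instead stops the spine when it first reaches $-(1-\epsilon)w$; until that time every branching event violates $\{D>w\}$ with probability $\geq 1-\epsilon$, so the conditional probability given the stopping time $\tau$ is at most $e^{-2(1-\epsilon)\tau}$, and optimizing $e^{-2(1-\epsilon)\tau-w^2/(2\tau)}$ over $\tau$ again gives $e^{-2w}$.
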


\subsubsection{Cluster Level Sets}
As evident by~\eqref{e:5}, the key to obtaining the theorems above lies in obtaining corresponding structural results concerning the cluster distribution $\nu$. Thanks to a good control over the convergences in~\eqref{e:O.2},~\eqref{e:N7} and the explicit description of $\cE$ and $\wh{\cE}$, one can turn local asymptotic properties of clusters into global statements concerning these limit processes, and then to asymptotic results for the extreme level sets of $h_t$ itself. In this subsection we therefore state the cluster law properties, which are used to derive the main theorems in this paper. These properties should be of independent interest.

\label{ss:ClusterProperties}
The first proposition concerns the asymptotic mean number of cluster particles at height $-v$ or above, as well as an upper bound on its second moment. Recall that by definition and~\eqref{e:N7}, if $\cC \sim \nu$
then $\cC([0,\infty)) = \cC(\{0\}) = 1$ almost-surely.
\begin{prop}
\label{p:12}
Let $\cC \sim \nu$. Then with $C_\star > 0$ as in Theorem~\ref{thm.asyden},
\begin{equation}
\label{e:29}
\bbE \cC([-v, 0]) \sim C_\star \rme^{\sqrt{2} v} 
\ \text{as } v \to \infty \,.
\end{equation}
Moreover, there exists $C > 0$ such that for all $v \geq 0$,
\begin{equation}
\label{e:28}
\bbE \big[ \cC([-v, 0])^2 \big] \leq C (v+1) \rme^{2\sqrt{2} v} \,.
\end{equation} 
\end{prop}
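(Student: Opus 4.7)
The plan is to use the characterization of $\nu$ from~\cite{ABK_E} (Theorem~2.1, Proposition~2.9): $\cC$ is the weak limit as $t \to \infty$ of the configuration of heights at time $t$ of a BBM seen from its maximal particle, conditioned on the maximum reaching the unusual value $\sqrt{2}t$. The Chauvin--Rouault spine decomposition describes this conditioned BBM via a distinguished spine particle performing a Brownian motion pinned to end at $\sqrt{2}t$, off of which independent standard BBMs branch at a suitable rate. Re-centering by the tip $\sqrt{2}t$ and time-reversing, the spine becomes, in the $t \to \infty$ limit, a Brownian motion with drift $-\sqrt{2}$ conditioned to stay below $0$; I will call this limiting backbone $R = (R_s)_{s \ge 0}$ and attach to it a Poisson process of decorations, each an independent standard BBM rooted at the corresponding spine position.

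For the first-moment asymptotic~\eqref{e:29}, I would apply the classical many-to-one lemma to each decoration: a decoration grafted at reverse-time $s$ at spine height $R_s$ contributes, in conditional expectation given $R$, a factor $\rme^s$ times the probability that a Brownian motion of variance $s$ starting at $R_s$ lies in $[-v,0]$ at its terminal time, while the whole BBM it generates stays below $0$ throughout (the implicit constraint that the decoration does not exceed the tip). Integrating against the decoration intensity and taking expectations over $R$, standard Bramson-type ballot estimates together with a Laplace analysis extract the exponential factor $\rme^{\sqrt{2}v}$ and leave a convergent integral whose value I take as the \emph{definition} of $C_\star$. The identification of this $C_\star$ with the one in Theorem~\ref{thm.asyden} is then an output of the proof: it follows by combining~\eqref{e:29} with the Poissonian reconstruction~\eqref{e:N8}.

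For the second-moment bound~\eqref{e:28}, I would use the corresponding many-to-two (spine-pair) decomposition, classifying a pair of distinct cluster particles $(x,y)$ by the location of their most recent common ancestor (MRCA). If the MRCA lies on the backbone at some reverse-time $s$, then $x$ and $y$ are endpoints of two conditionally independent decoration BBMs; summing over $s$ using the first-moment estimate yields a contribution of order $\big(\bbE \cC([-v,0])\big)^2 = O(\rme^{2\sqrt{2}v})$. If the MRCA lies inside a single decoration at branching point $(s',z)$, one has two further subtrees each of which must end in $[-v,0]$ without ever exceeding $0$; integrating $(s',z)$ against the Gaussian tail and ballot-type controls produces an additional linear factor in $v$, yielding the bound $O((v+1) \rme^{2\sqrt{2}v})$.

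The main obstacle I anticipate is the uniform control of the ``stay below $0$'' conditioning on the decoration BBMs, which couples the ballot/Brownian-meander estimates with the backbone height $R_s$ and must be handled carefully so that the $s$-integrals converge uniformly in $v$. Tracking multiplicative constants through the spine change of measure is delicate, but as noted it is an internal matter: $C_\star$ is \emph{fixed} by~\eqref{e:29} and then transported to Theorem~\ref{thm.asyden} by the downstream reconstruction argument.
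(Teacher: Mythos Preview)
Your high-level strategy (spine decomposition, many-to-one for the first moment, many-to-two for the second) matches the paper's. But there is a real gap in your description of the backbone. You claim $R$ is ``Brownian motion with drift $-\sqrt{2}$ conditioned to stay below $0$''; in fact the conditioning event is that at every branching time $\sigma_k$ the \emph{decoration} BBM grafted there stays below the tip, i.e.\ $\max_k\big(R_{\sigma_k} + \wh{h}^{\sigma_k*}_{\sigma_k}\big) \le 0$, with $\wh{h}^{\sigma_k*}_{\sigma_k}$ the centered decoration maximum. The spine $R$ itself is not directly constrained; what is constrained is $R$ plus a tight but two-sided random offset, sampled only at Poissonian times. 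This is exactly the ``decorated random-walk-like process'' of Section~\ref{s:Reduction}, and the ballot inputs required are the tailored estimates of Propositions~\ref{p:A2}--\ref{p:A4}, not the classical Brownian ones. Because the conditioning couples $R$ to all the decorations simultaneously, the law of $R$ under the conditioned measure is not the simple object you describe, and your step ``take expectations over $R$'' under that law does not go through as written.

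A second point: the paper does not work with the limiting object directly. It computes the conditional moments at finite $t$ (Lemmas~\ref{l:7.1} and~\ref{l:7.2}), obtains a uniform-in-$t$ second-moment bound, and uses the resulting uniform integrability to pass the expectation through the $t\to\infty$ limit in the proof of Proposition~\ref{p:12}; you would need an analogous justification for your limit interchange. Finally, the ``Laplace analysis'' you allude to hides the key scaling: the dominant contribution to $\bbE\,\cC([-v,0])$ comes from decorations grafted at reverse-time $s$ of order $v^2$, and after the substitution $s=v^2 y$ one gets $\rme^{\sqrt{2}v}$ times a convergent $y$-integral which defines $C_\star$ (see~\eqref{e:M1.42} and the proof of Lemma~\ref{l:7.1}). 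For the second moment, your MRCA dichotomy corresponds in the paper to the split between the double integral over $s\neq s'$ and the diagonal $s=s'$ in Lemma~\ref{l:7.3}; it is the diagonal term, handled via Lemma~\ref{l:6.4}, that produces the extra factor $(v+1)$.
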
  
As surmised by the above upper bound, the number of points in $\cC$ lying in $[-v,0]$ does not concentrate around its mean for large $v$. 

In the next proposition we find the rate of decay in the right tail of the distribution of the distance between the top two cluster particles.
\begin{prop}\label{prop:right.tail.cluster}
Let $\cC \sim \nu$. Then,
\begin{equation}\label{equation.thm:right.tail.cluster.BBM} 
\lim_{v \to \infty} v^{-1} \log \bbP \big( \cC([-v, 0)) = 0  \big) 
= -2.
\end{equation}
\end{prop}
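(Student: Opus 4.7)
My plan is to establish matching upper and lower bounds for $v^{-1}\log \bbP(\cC([-v,0))=0)$, both converging to $-2$ as $v\to\infty$. I work with a concrete realization of $\nu$ via the spine decomposition of BBM, as recalled in the introduction. Namely, $\cC$ is the weak limit, as $t\to\infty$, of $\sum_{y\in L_t}\delta_{h_t(y)-h_t^*}$ under the conditioning that $h_t^*$ reach the unusually high level $\sqrt 2\, t$; in the Chauvin--Rouault formulation this law has an explicit spine structure, with a distinguished particle performing a Brownian motion with drift $\sqrt 2$ (conditioned to hit $\sqrt 2\, t$ at time $t$), along which branching occurs at the doubled rate $2$, each branching launching an independent ordinary BBM. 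The event $\{\cC([-v,0))=0\}$ then translates, in the $t\to\infty$ limit, into the requirement that no particle of any descendant BBM reach within $v$ of the spine's terminal height.

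For the upper bound $\bbP(\cC([-v,0))=0)\leq \rme^{-(2-o(1))v}$, I first note that the moment estimates of Proposition~\ref{p:12} together with a Paley--Zygmund inequality yield only the weak bound $1-O(1/v)$ on this probability, which is insufficient. Instead, I exploit the spine decomposition directly: conditionally on the spine trajectory $X$ and on the Poisson set of branching times on $[0,t]$ (intensity $2$), the descendants are independent BBMs, so the void event factorises as a product over branching times of conditional left-tail probabilities $\bbP(M_{t-s}\leq X_t-X_s-v)$. Taking expectations over the branching times turns the product into $\exp\!\bigl(-2\int_0^t \bbP(M_{t-s}>X_t-X_s-v\mid X)\,\rmd s\bigr)$, and integrating against the Brownian-bridge law of $X$ with the help of Bramson's sharp tail $\bbP(M_T\geq m_T+x)\asymp x\,\rme^{-\sqrt 2\, x}$ should yield, after careful asymptotic analysis, the required rate. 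The exponent $2=\sqrt 2\cdot\sqrt 2$ arises here as the product of the Bramson tail exponent with the spine drift.

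For the matching lower bound I exhibit an event of probability at least $\rme^{-(2+o(1))v}$ on which $\{\cC([-v,0))=0\}$ holds. A natural candidate, formulated in the spine picture, is to force the spine, over an appropriately chosen time window, into an unlikely upward deviation elevating its terminal position; conditionally on such a deviation, a standard union bound over independent descendants --- using the right-tail asymptotic above --- shows that with probability bounded away from $0$ every descendant falls more than $v$ below the spine's terminal height. The exponential cost of the spine deviation is extracted from a Brownian-bridge/Girsanov computation, and, through an optimization over the window length and the shape of the deviation, produces the matching constant~$2$.

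The main obstacle is to match the exponent at exactly $-2$, rather than at $-\sqrt 2$ (a naive one-particle Bramson tail) or $-(2+\sqrt 2)$ (the decay of $\bbP(v^1-v^2>w)$ from Theorem~\ref{t:2.9}). An a posteriori sanity check is provided by the identity $\bbP(v^1-v^2>w)=\bbP(\cC([-w,0))=0)\cdot \bbP(u^1-u^2>w)$, following from the independence of the top cluster and the Poisson backbone in the representation~\eqref{e:5}, combined with the explicit computation $\bbP(u^1-u^2>w)=\rme^{-\sqrt 2\, w}$ for the PPP $\cE^*$; however, in the logical order of the paper, Proposition~\ref{prop:right.tail.cluster} precedes and feeds into Theorem~\ref{t:2.9}, so a self-contained spine-based analysis as above is required.
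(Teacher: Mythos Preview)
Your general framework---spine decomposition, matching upper and lower bounds---is the same as the paper's, and your exponential-over-Poisson formula for the conditional void probability is correct in principle. However, your account of \emph{why} the exponent equals $-2$ is off, and this matters because both of your bound sketches, as written, would not produce the right constant.

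The mechanism is not ``$\sqrt 2\cdot\sqrt 2$ (Bramson tail $\times$ spine drift)''. In the centered, time-reversed walk picture $\wh W_{t,s}$ (as in~\eqref{e:M1.35} and Lemma~\ref{l:5.2}), the event $\{\cC([-v,0))=0\}$ forces, for every branching time $\sigma_k\le r_t$, that $\wh W_{t,\sigma_k}+\wh h^{\sigma_k *}_{\sigma_k}\le -v$. For $\sigma_k$ small, both $\wh W_{t,\sigma_k}$ and $\wh h^{\sigma_k*}_{\sigma_k}$ are $O(1)$, so a branching at a small time violates the event with probability close to $1$. The dominant scenario is therefore: \emph{no branching} on $[0,\tau]$ (cost $\rme^{-2\tau}$, since the spine branches at rate $2$) \emph{and} $\wh W_{t,\tau}\le -v$ (Gaussian cost $\approx\rme^{-v^2/(2\tau)}$); optimizing $2\tau+v^2/(2\tau)$ at $\tau=v/2$ yields $2v$. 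After the walk has dropped below $-v$, the remaining barrier event has only polynomial cost by the ballot-type estimates of Lemma~\ref{lem:15}. This is exactly the Brunet--Derrida heuristic executed in the paper's proof, and also underlies the matching upper bound via a first-passage time to level $-v+M$: conditional on $\tau=s$, each of the (Poisson, rate $2$) branchings on $[0,s]$ independently kills the void event with probability $\ge 1-\epsilon$, giving $\bbE\,\epsilon^{\cN([0,s])}=\rme^{-2(1-\epsilon)s}$, and one integrates against the hitting-time density $\lesssim s^{-1}\rme^{-(1-\epsilon)v^2/(2s)}$.

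Your lower-bound plan---force an atypical spine deviation and then control all descendants by a union bound with Bramson's right tail---has a genuine gap: it does not handle the descendants branching at small (reversed) times $\sigma_k$, for which $\wh W_{t,\sigma_k}\approx 0$ and the offspring BBM has run for too short a time to be pushed below $-v$ with any positive probability. No amount of deviation later on rescues these early branchings; you must \emph{suppress} them, and that is precisely where the branching-rate factor $\rme^{-2\tau}$ enters. Likewise, in your upper bound, integrating $\exp(-2\int_0^t \bbP(\cdot)\,\rmd s)$ against the bridge law will only recover $-2v$ once you localize on the event that the walk stays above $-v(1-\epsilon)$ up to some time $s$ (so that the inner probability is $\ge 1-\epsilon$), which is the same stopping-time decomposition the paper uses.
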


\subsection{Proof Outline}
\label{ss:ProofOutline}
Let us give a brief outline of the proof of the main results in this paper. As mentioned before, the key ingredient in deriving results pertaining to the extremal landscape of the process is the study of the cluster distribution $\nu$. Aside from the limit of the derivative martingale $Z$, whose effect is merely a global shift, all remaining ingredients in the definition of $\cE$ and $\wh{\cE}$ are explicit. Properties of the cluster law can therefore be translated via~\eqref{e:5} or~\eqref{e:N7} and~\eqref{e:N8}, to properties of $\cE$ and $\wt{\cE}$ and through convergences~\eqref{e:O.2} and~\eqref{e:N7} into asymptotic properties of the statistics of extreme values of $h$.

\subsubsection{Cluster Level Sets}
The study of cluster law properties, which constitutes the core of the paper, begins by observing that the product structure of the intensity measure in~\eqref{e:N7} and indistinguishably of particles, imply that we could focus on the limiting law of the cluster around 
a uniformly chosen particle $X_t$ in $L_t$, conditioned to be the global maximum at time $t$ and having height, say, $m_t$. Tracing the trajectory of this distinguished particle backwards in time and accounting, via the spinal decomposition (Many-to-one Lemma, see Subsection~\ref{ss:Spine}), for the random genealogical structure, one sees a particle performing 
a standard Brownian motion $W=(W_s)_{s \geq 0}$ from $m_t$ at time $0$ to $0$ at time $t$. This, so-called, {\em spine particle} gives birth at random Poissonian times (at an accelerated rate $2$, see Subsection~\ref{ss:Spine}) to independent standard branching Brownian motions, which then evolve back to time $0$ and are conditioned to have their particles stay below $m_t$ at this time. The cluster distribution at genealogical distance $r$ around $X_t$ is therefore determined by the relative heights of particles of those branching Brownian motions which branched off before time $r$ (see Figure~\ref{f:Spinal}).

\begin{figure}[ht!]
\centering
    \includegraphics[width=0.65\textwidth]{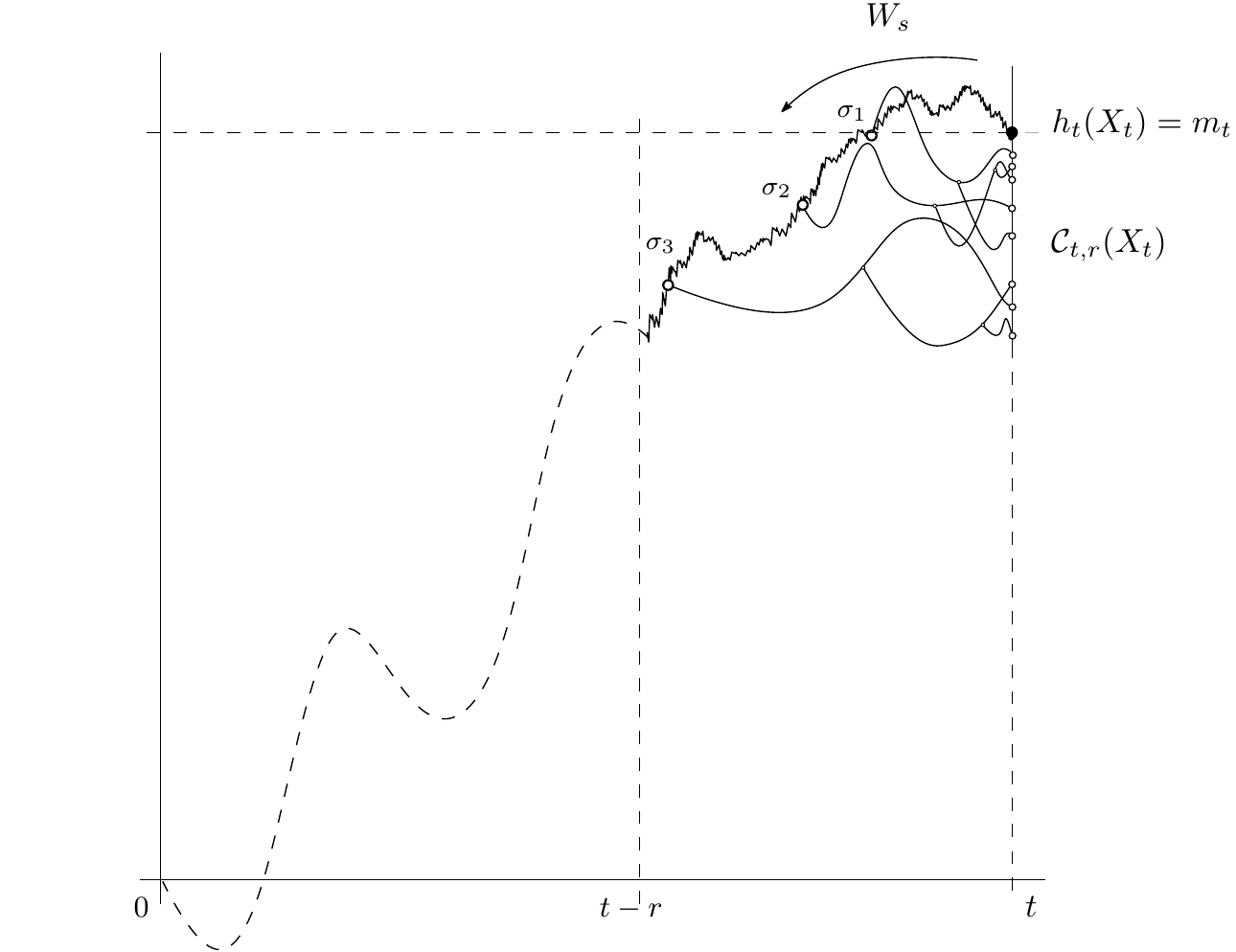} 
    \caption{The cluster $\cC_{t,r}(X_t)$ around the spine $X_t$, conditioned to be the maximum and at height $m_t$. The process $W_s$ is a Brownian bridge from $(0,m_t)$ to $(t,0)$ and $\sigma_1, \sigma_2, \dots$ are the branching times.
    } \label{f:Spinal}
\end{figure}

Formally, denoting by $0 \leq \sigma_1 < \sigma_2 < \dots$ the points of a Poisson point process $\cN$ on $\bbR_+$ with rate $2$ and letting $H = (h^{s}_t(x) :\: t \geq 0\,,\,\, x \in L^s_t)_{s \geq 0}$ be a collection of independent branching Brownian motions (with $W, \cN$ and $H$ independent),  the limiting distribution $\nu(\cdot)$ may be written (Lemma~\ref{l:7.0}) as the $t\to \infty$ limit of $\bbP \big(\cC_{t,r_t} (X_t) \in \cdot \big|\, h^*_t = h_t(X_t) = m_t \big)$, where $r_t$ is as in~\eqref{e:N6} and $h^*_t = \max_{x \in L_t} h_t(x)$. Writing further $\bbP_{0,x}^{t,y}$ for the conditional probability measure $\bbP(\cdot \,|\, W_0 = x \,,\,\, W_t = y)$, this probability reads as 
\begin{equation}
\label{e:M1.33}
\textstyle
\bbP_{0,m_t}^{t,0} \! \Big( \! \sum\limits_{\sigma_k \leq r_t} \sum\limits_{x \in L^{\sigma_k}_{\sigma_k}} \! \delta_{h^{\sigma_k}_{\sigma_k}(x)-m_t} \big(\cdot - W_{\sigma_k}\big) \! \in \! \cdot
	 \ \Big| \max\limits_{k: \sigma_k \in [0,t]}  \big(W_{\sigma_k} \! +  h^{\sigma_k *}_{\sigma_k}\big) \! \leq  \! m_t \Big).
\end{equation}

Since the law of $W_s$ under $\bbP_{0,m_t}^{t,0}$ is the same as that of $W_s + m_t (1-\frac{s}{t})$ under $\bbP_{0,0}^{t,0}$, introducing $\wh{W}_{t,s} := W_s - \gamma_{t,s}$ with $\gamma_{t,s} := 3/(2\sqrt{2})(\log^+ s - \tfrac{s}{t}\log^+ t)$, we may rewrite the above as (Lemma~\ref{l:5.2}):
\begin{equation}
\label{e:M1.35}
\textstyle
\nu(\cdot) = \lim\limits_{t \to \infty}
\bbP_{0,0}^{t,0}
\Big( \! \sum\limits_{\sigma_k \leq r_t} \, \cE_{\sigma_k}^{\sigma_k} \big(\cdot - \wh{W}_{t,\sigma_k}\big)
	\!  \in \! \cdot 
	 \ \Big| \max\limits_{k : \! \sigma_k \in [0,t]} \big(\wh{W}_{t,{\sigma_k}}  \! +   \wh{h}^{\sigma_k*}_{\sigma_k}\big) \! \leq \!  0 \Big)\,,
\end{equation}
where $\cE^{s}_t$ is the extremal process associated with $h^{s}_t$ and $\wh{h}^s_{t} = h^s_t - m_t$. The triplet $(\wh{W}, \cN, H)$ will be referred to as a {\em decorated random-walk-like} process (see Section~\ref{s:Reduction}). We remark that this characterization bares strong resemblance to the description of the cluster distribution in~\cite{ABBS}. 

The above representation can now be used to study the distribution of the size of cluster level sets as well as the law of the distance to the second highest particle in the cluster. To estimate the first moment of the size of the cluster level set, one can use~\eqref{e:M1.35}, uniform integrability and Palm calculus to express $\bbE \cC([-v, 0])$ for $\cC \sim \nu$ and any $v \geq 0$ as 
the limit when $t \to \infty$ of
\begin{equation}
\label{e:M1.34.1}
\begin{split}
\int_{0}^{r_t} 2\rmd s \int_{O(1)} & \bbE \Big( \cE^s_s \big([-v, 0] - z\big) ;\; z + \wh{h}^{s*}_s \leq 0 \Big) \\
& \ \times \frac{\bbP_{0,0}^{t,0} \Big(
\max\limits_{k : \, \sigma_k \in [0,t]} \big(\wh{W}_{t,{\sigma_k}} + \wh{h}^{\sigma_k*}_{\sigma_k}\big) \leq 0 \,\,,\, \wh{W}_{t,s} \in \rmd z\Big)}
{\bbP_{0,0}^{t,0} \Big(\max\limits_{k : \, \sigma_k \in [0,t]} \big(\wh{W}_{t,{\sigma_k}} + \wh{h}^{\sigma_k*}_{\sigma_k}\big) \leq 0 \Big)} \rmd z \,.
\end{split}
\end{equation}
Above, we have also conditioned on $\{\wh{W}_{t,s} = z\}$ for $z = O(1)$ and used the total probability formula (see Lemma~\ref{l:7.3} and the proof of Lemma~\ref{l:7.1}).

The left most term in the integrand is the first moment of the size of the (global) extreme level set of $h^s_s$, subject to a truncation event restricting the height of its global maximum. Using once again the spinal decomposition, we can express this expectation in terms of a probability involving (again) a uniformly chosen particle $X_t$ as,
\begin{equation}
\begin{split}
\label{e:M1.37}
\bbE & \Big(\cE_t\big([-v, u]\big) ;\; \wh{h}^*_t \leq u \Big) \\
& = \rme^t \bbP \big(\wh{h}_t(X_t) \in [-v, u] \,,\,\, \wh{h}^*_t \leq u \big) \\
& = \rme^t \int_{w=-v}^u \bbP \big( \wh{h}^*_t \leq u \,\big|\, \wh{h}_t(X_t) = w \big) 
	\bbP \big( \wh{h}_t(X_t) \in \rmd w \big) \,.
\end{split}
\end{equation}
where $v \leq 0$ and $u \geq -v$. As before, tracing the trajectory of the spine particle, the last conditional probability can be further expressed in terms of the decorated random-walk-like process as,
\begin{equation}
\label{e:M1.36}
\bbP \big( \wh{h}^*_t \leq u \, \big| \, \wh{h}_t(X_t) = w \big)
= \bbP_{0,w-u}^{t,-u} \Big( \max_{k: \sigma_k \in [0,t]} \big(\wh{W}_{t,\sigma_k} + \wh{h}^{\sigma_k*}_{\sigma_k}\big) \leq 0 \Big) \,.
\end{equation}

Examining~\eqref{e:M1.34.1} and~\eqref{e:M1.36}, we see that to complete the derivation we need good estimates on probabilities of the form
$\bbP_{0,x}^{t,y} \big( \max_{k: \sigma_k \in [0,t]} \big(\wh{W}_{t,\sigma_k} + \wh{h}^{\sigma_k*}_{\sigma_k}\big) \leq 0 \big)$,
namely of the event that the random-walk-like process plus its decorations stays below $0$ at random sampling times. For standard Brownian motion, the well known reflection principle gives
\begin{equation}
\bbP_{0,x}^{t,y} \Big(\max_{s \in [0,t]} W_s \leq 0 \Big) \sim \frac{2xy}{t} 
\quad \text{as } t \to \infty\,,
\end{equation}
uniformly in $x, y \leq 0$ satisfying $xy = o(t)$ and with the right hand side holding as an upper bound for all $t \geq 0$ and $x,y \leq 0$. We show (Subsection~\ref{ss:RWEstimates}) that similar estimates hold for the decorated random-walk-like process as well. This is not very surprising, as the drift function $\gamma_{t,s}$ is bounded by $1 + \log^+ (s \wedge (t-s))$ (Lemma~\ref{l:lisa} with $r=0$), the random decorations $(h^{s*}_{s} :\: s \geq 0)$ are (at least) exponentially tight (Lemma~\ref{l:TailBBMMax}) and the random sampling times $(\sigma_k :\: k \geq 1)$ arrive at a Poissonian rate.

Using such estimates in~\eqref{e:M1.36} one obtains
$\bbP \big( \wh{h}^*_t \leq u \, \big| \, \wh{h}_t(X_t) = w \big) 
\approx C (u^++1)(u-w) t^{-1}$ (in this section $\approx$ means ``roughly equals to''). This can then be used in~\eqref{e:M1.37} together with
\begin{equation}
\begin{split}
\bbP(\wh{h}_t(X_t) \in \rmd w) & = \bbP(h_t(x) - m_t \in \rmd w) \\
& = (2\pi t)^{-1/2} \rme^{-(m_t + w)^2/{2t}} \approx C t \rme^{-t} \rme^{-\sqrt{2} w - w^2/(2t)} \rmd w\,,
\end{split}
\end{equation}
to yield (Lemma~\ref{l:18})
\begin{equation}
\label{e:M1.41}
\bbE \big(\cE_t\big([-v, u]\big) ;\; \wh{h}^*_t \leq u \big) \approx (u^++1) (u+v) C \rme^{\sqrt{2} v - v^2/(2t)}.
\end{equation}

Plugging this back into the integral in~\eqref{e:M1.34.1} and estimating the probability in the denominator by $C t^{-1}$ and the probability in the numerator by $C z^2 (s(t-s))^{-1} \bbP_{0,0}^{t,0}(\wh{W}_{t,s} \in \rmd z) \approx C t^{-1} s^{-3/2} z^2$, one obtains (after integration over $z$),
\begin{equation}
\label{e:M1.42}
\begin{split}
\bbE \cC([-v, 0]) &\approx C v \rme^{\sqrt{2}v}
 \int_{s=0}^{\infty} s^{-3/2} \rme^{-v^2/(2s)} \rmd s \\
&= C \rme^{\sqrt2{v}} \int_{r=0}^{\infty} r^{-3/2} \rme^{-1/(2r)} \rmd r 
= C' \rme^{\sqrt2{v}}\,,
\end{split}
\end{equation}
which is the first part of Proposition~\ref{p:12} with $C_\star = C'$.
Similar computations, albeit more involved, can be used to obtain an upper
bound on the second moment of $\cC([-v,0])$ as in the second part of Proposition~\ref{p:12}.

\subsubsection{Extreme Level Sets}
As suggested before, we can take advantage of convergences~\eqref{e:O.2} and~\eqref{e:N7} to  prove all results for the limit processes $\cE$ and $\wh{\cE}$ first and then convert these to asymptotic statements for $h_t$, using standard weak convergence arguments for random measures. Working directly with the limiting objects has the advantage that, equipped with the needed cluster properties, their law has an explicit and rather simple form (see \eqref{e:5},~\eqref{e:N7},~\eqref{e:N8}).

Let us demonstrate this by deriving asymptotics for the size of extreme level sets (Theorem~\ref{thm.asyden}). To this end, we show that $\cE([-v, \infty))v^{-1} \rme^{-\sqrt{2}v}$ tends to $C_\star Z$  as $v \to \infty$ in probability (Lemma~\ref{l:8.1}). Using~\eqref{e:5}, we can begin by writing $\cE([-v, \infty])$ as the sum $\sum_{k \geq 1} \cC^k([-v-u^k, 0])$, with $\cC^k$, $u^k$ as in \eqref{e:5}. Ignoring terms with $u^k \notin [-v+\sqrt{\log v}, \sqrt{\log v}]$, which are negligible in the scale we consider (see proof of Lemma~\ref{l:8.1}) and denoting by $\wt{\cE}([-v, \infty))$ the sum of the remaining terms, we can condition on $Z$ and use~\eqref{e:M1.42} together with the Poisson law of $\cE^*$ to estimate $\bbE\big(\wt{\cE}([-v, \infty) \mid Z \big)$~by 
\begin{equation}
\label{e:M1.45}
\begin{multlined}
	\int_{-v+\sqrt{\log} v}^{\sqrt{\log v}}  \bbE \cC\big([-v-u,0]\big) Z \rme^{-\sqrt{2} u} \rmd \\	
	\approx \int_{-v+\sqrt{\log v}}^{\sqrt{\log v}}  C_\star \rme^{\sqrt{2}(v+u)} Z \rme^{-\sqrt{2} u} \rmd u	
	\approx C_\star Z v \rme^{\sqrt{2}v} \,.
\end{multlined}
\end{equation}
A similar computation using the second moment bound on $\bbE \cC\big([-v-u,0]\big)$ in place of the first, shows that the conditional (on $Z$) variance of $\wt{\cE}([-v, \infty))$ is at most $Cv^{-1}$ times its conditional mean. Then Chebyshev's inequality shows that $\wt{\cE}([-v, \infty))$ is concentrated around its conditional mean, which in light of~\eqref{e:M1.45} and 
$\wt{\cE}([-v, \infty)) \approx \cE([-v, \infty))$ yields the desired result.

\subsubsection{Distance to the Second Maximum}
\label{ss:Distance}
Lastly, let us discuss the upper tail decay of the law governing the distance between the first and second maxima of $h$, namely Theorem~\ref{t:2.9} and Proposition~\ref{prop:right.tail.cluster} on which the theorem relies. Again, thanks to the convergence of the extremal process, we can look at the distance between the two highest points $v^1 > v^2$ in $\cE$. Then, the clustered structure of the limit~\eqref{e:5} readily shows that these points are at least $w > 0$ apart, if and only if the distance between the two highest local maxima $u^1 > u^2$ in $\cE^*$ and the distance to the second highest particle in cluster $\cC^1$ of $u^1$ are both at least $w$. Thanks to independence, we therefore~get
\begin{equation}
\label{e:M185}
\bbP\big(v^1 - v^2 > w \big) = \bbP \big(u^1 - u^2 > w\big) \bbP\big(\cC([-w, 0)) = 0 \big) \,.
\end{equation}
The first probability on the right hand side evaluates to $C \rme^{-\sqrt{2}w}$ (see proof of Theorem~\ref{t:2.9}). This is an easy exercise in Poisson point processes, after noticing that the random shift governing the law of $\cE^*$ can be just ignored.
\begin{figure}[ht!]
\centering
    \includegraphics[width=0.5\textwidth]{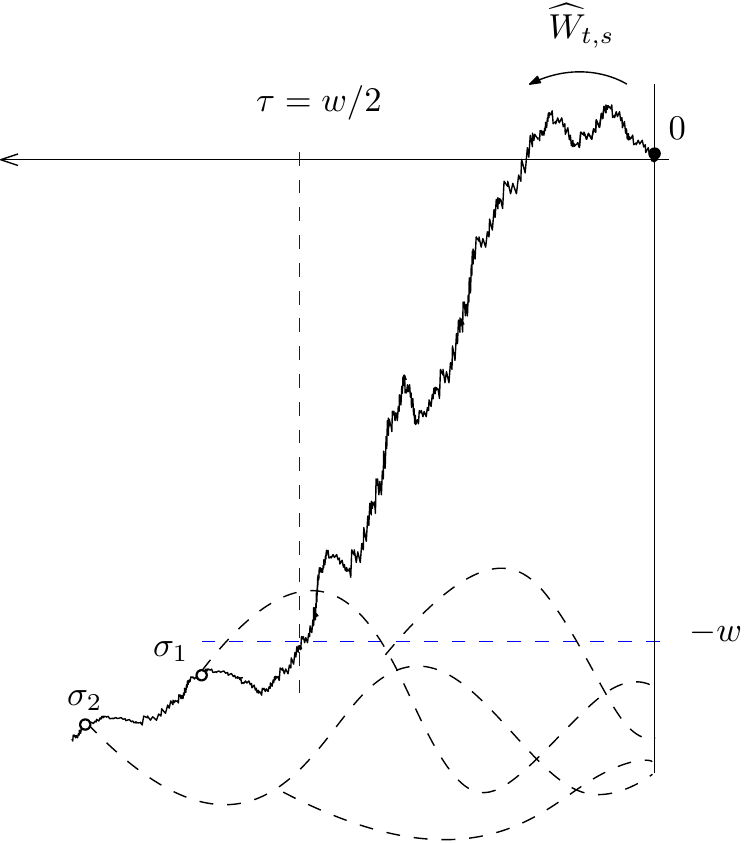} 
    \caption{A typical realization for $\{\cC([-w,0)) = 0\}$:  
    The process $\wh{W}_{t,s}$ reaches height $-w$ at time $\tau = w/2$ without branching and then, along with its decorations, stays below $-w$ until time $r_t$.}
    \label{f:distance}
\end{figure} 

For the second probability (Proposition~\ref{prop:right.tail.cluster}), we again use the random-walk representation of the cluster distribution, per~\eqref{e:M1.33} and~\eqref{e:M1.35} and estimate instead the probability that $\cC_{t,r_t}\big([-w, 0)\big) = 0$ as $t \to \infty$ under the conditional measure, where $\cC_{t,r_t} = \cC_{t,r_t}(X_t)$. For a lower bound, we
follow the heuristics of Brunet and Derrida (Subsection 4.2 in~\cite{brunet2011branching}) and observe 
that having no points in $\cC_{t,r_t}|_{[-w, 0)}$ can be realized by the intersection of the event that $\wh{W}_{t,s}$ reaches height $-w$ or below at some time $s=\tau \in (0, r_t)$ without branching, with the event that $\wh{W}_{t,\sigma_k}+\wh{h}^{\sigma_k*}_{\sigma_k} \leq -w$ for all $\sigma_k \in [\tau, r_t]$. 

Now, the probability of the first event is, up to sub-exponential terms, $\rme^{-w^2/(2\tau)} \times \rme^{-2\tau}$. This is clearly the case without the conditioning, but can be shown to hold also under the conditional measures in~\eqref{e:M1.35}. When $\wh{W}_{t,\tau} \leq -w$, an entropic repulsion effect, which is the result of conditioning the random-walk-like process plus its decorations to stay negative, makes the probability of the second event decay only polynomially in $w$ (uniformly in $t$). Multiplying the two yields $\rme^{-w^2/(2\tau)-2\tau}$ as a lower bound (on an exponential scale) on the conditional probability of $\{\cC_{t,r_t}\big([-w, 0)\big) = 0\}$ for any choice of $\tau$ and all $t$ large enough. The exponent is maximized at $\tau=w/2$, yielding a lower bound of $\rme^{-2w}$ (see Figure~\ref{f:distance}).

A matching upper bound can be obtained by stopping the process $\wh{W}_{t,s}$ at the first time $T$ when it reaches height $-w(1-\epsilon)$ for $\epsilon > 0$. Then up to this time and if $w$ is large, any branching event will result in violation of the condition $\cC\big([-w, 0)) = 0$ with probability $1-\delta$, where $\delta > 0$ can be made arbitrarily small, by choosing $\epsilon$ appropriately. This makes the probability of having no points in $[-w, 0)$ conditional on $T$ at most $\rme^{-2(1-\delta) T}$ and gives an overall upper bound (on an exponential scale) of $\rme^{-w^2/(2\tau)-2(1-\delta)\tau} \rmd \tau$ on the probability that $\cC_{t,r_t}\big([-w, 0)\big) = 0$ and $T \in \rmd \tau$, under the conditional measure in~\eqref{e:M1.35}. Integrating with respect to $\tau$, we are led to the maximization problem from before, and consequently obtain $\rme^{-(2-\delta')w}$ as an upper bound for all $t \geq 0$ and arbitrarily small $\delta' > 0$, as desired.

\subsection{Context, Extensions and Open Problems}
\label{ss:Discussion}

Branching Brownian motion is among the most fundamental random processes in modern probability theory. Aside from an intrinsic mathematical interest, the motivation for considering such a model comes from various disciplines, such as biology, where it is a canonical choice for describing population dynamics (e.g.~\cite{Fis1937}) or physics, where it can be used to model correlated energy levels in spin-glass-type systems~\cite{bovier2006tomography,DeS1988,Der1985}. In mathematics, it has deep connections with analysis, e.g. via the F-KPP equation (used by McKean~\cite{McKean} to derive asymptotics for the centered maximum) as well as other fields in probability such as random matrices~\cite{FyoKea13}, super-processes~\cite{Dawson1993}, multiplicative chaos~\cite{rhodes2013gaussian} and more. We invite the reader to consult~\cite{bovier2016gaussian,shiZhan} for recent sources on this and related models.

From the point of view of extreme value theory, results of the past few years have shown that branching Brownian motion belongs to the same universality class as other models, where correlations are  ``scale-free'' (either logarithmic or tree-like). These include the branching random walk~\cite{Aid13,Mad2015}, the two-dimensional Gaussian free field~\cite{biskup2015full,BDingZ} (and logarithmically correlated Gaussian fields in general~\cite{ding2015convergence}), characteristic polynomials of GUE ensembles~\cite{FyoSim16} and more. In all of these models the asymptotic form of the extremal process (or at least the derived law of the centered maximum) is that of a randomly shifted clustered Poisson point process with an exponential intensity, as in~\eqref{e:5}, albeit with different laws for the shift and cluster decorations. 

Statistics of extreme values of such systems are interesting for multiple reasons. From a pure-mathematical perspective, logarithmic or tree-like correlations can be thought of as the next natural step after the i.i.d. case, where the theory of extreme values is fully developed. More applicatively, the very large (or very small) values in a system often correspond to quantities of interest in the reality which the model describes. For instance, interpreting the heights as energy levels in a spin-glass system, (negative) extreme values capture the lowest energy states. The latter carry the corresponding Gibbs distribution at low temperature (glassy-phase)~\cite{carpentier2001glass,hartung2015,Madaule2015}.

Getting back to our results, the extension to branching Brownian motion with a general offspring distribution requires only minor changes in the proofs. For simplicity, we treated the binary splitting case only. All theorems and propositions will therefore still hold, albeit with different constants. Moreover, we conjecture that Theorem~\ref{thm.asyden} (with a different rate in the exponential), Theorem~\ref{t:14} and Corollary~\ref{c:1} also hold in other models, where correlations are scale-free. 

In particular, we believe that our method of proof could be applied in the case of the branching random walk and the two-dimensional Gaussian free field. This is because the three main ingredients in the proofs (see Subsection~\ref{ss:ProofOutline}): convergence of the extremal process, random walk representation of the cluster distribution and uniform tails for the centered maximum, are available in these two models as well. Nevertheless, carrying out this program requires overcoming non-trivial technical challenges and would result in a welcomed contribution to the field.

On the other hand, the statement of Proposition~\ref{prop:right.tail.cluster} depends crucially on the distribution of the difference between the heights of two nearby particles (in genealogical distance) or vertices (in lattice distance). Unlike for branching Brownian motion, where this difference can be made large by a delayed branching event, costing only an exponentially decaying probability (see Sub-Subsection~\ref{ss:Distance}), the tail of this difference is Gaussian for both the branching random walk and the Gaussian free field. We conjecture that this will result in a Gaussian decay for the probability in the statement of Proposition~\ref{prop:right.tail.cluster} and consequently also for the probability in Theorem~\ref{t:2.9}. We pose this as an open problem.

\subsection*{Organization of the Paper} 
The remainder of the paper is organized as follows. Section~\ref{s:Tools} includes the necessary technical tools to be used in the proofs thereafter. These include mainly the random walk estimates discussed above as well as the spinal decomposition and uniform bounds on the tail of the centered maximum. In Section~\ref{s:Reduction} we present the reduction statements, in which events concerning a spine particle are converted to events involving the decorated random-walk-like process. This section includes also some estimates for probabilities of such events, the proof of which uses the random-walk results from Section~\ref{s:Tools}. Next comes Section~\ref{s:Moments}, in which we use the reduction statements and the random-walk estimates to compute moments of $\cE([-v, \infty))$ subject to a truncation event restricting the height of the global maximum. These in turn are used in Section~\ref{s:ClusterProperties} to derive all results concerning cluster level sets, i.e. all propositions in Subsection~\ref{ss:ClusterProperties}. Section~\ref{s:GlobalProperties} contains the proofs of all the theorems in Subsection~\ref{ss:Global}, namely all extreme level set statements. Lastly, proofs of the random walk estimates from Section~\ref{s:Tools} can be found in the supplement material~\cite{supp}.

\section{Technical Tools}
\label{s:Tools}

In this section we introduce several technical tools which will be used throughout in the proofs to follow. Subsection~\ref{ss:RWEstimates} includes estimates on the probability that a random-walk-like process, with random time steps and decorations, stays below a curve. As explained in the proof outline (Subsection~\ref{ss:ProofOutline}), such a process arises after various reduction steps, by tracing, backwards in time, a uniformly chosen particle reaching an extreme height. Because of the randomness of the underlying branching structure, the genealogy as seen from the point of view of this distinguished (spine) particle has a biased distribution. Spinal decomposition theory can then be used to account for this bias and to convert statements involving the spine particle to ones which pertain to all particles. This is the subject of Subsection~\ref{ss:Spine}. Finally Subsection~\ref{ss:Tails} includes uniform bounds on the tail probabilities of the centered maximum. 

Although the ``random-walk'' statements in Subsection~\ref{ss:RWEstimates} are standard in flavor, the particularity of the random-walk-like process to which they apply, implies that one cannot find them ``on-the-shelf'' and new proofs have to be provided. Since these are quite lengthy and technical they have been placed in the supplemental material~\cite{supp}.

\subsection{Random Walk Estimates}
\label{ss:RWEstimates}
Let $W=(W_u :\: u \geq 0)$ be a standard one dimensional Brownian motion. Given $x,y \in \bbR$  and $0 \leq s < t$, we shall denote by $\bbP_{s,x}^{t,y}$ and
$\bbP_{s,x}$ the conditional distribution $\bbP(\cdot \,|\, W_s = x \,,\,\, W_t = y)$
and $\bbP(\cdot \,|\, W_s = x)$ respectively (if $s=0$ we assume that $W_0$ was $x$ in the first place). On the same probability space, let us suppose also the existence of a collection $Y=(Y_u :\: u \geq 0)$ of independent random variables, which is also independent of $W$. These random variables, which will be referred to as ``decorations'', satisfy 
\begin{equation}
\label{e:A1}
\forall u,z \geq 0 :\ \ 
\bbP \big(|Y_u| \geq z) \leq \delta^{-1} \rme^{-\delta z} 
\end{equation}
for some $\delta > 0$.

The third collection of random variables defined on this space, comes in the form of a Poisson point process on $\bbR$:
\begin{equation}
\label{e:A2}
\cN \sim \text{PPP}(\lambda \rmd x) \,,
\end{equation}
for some $\lambda > 0$. This process is assumed to be independent of $W$ and $Y$ and we denote by $\sigma = (\sigma_k :\: k \geq 1)$ the collections of all atoms of $\cN$, enumerated in increasing order. 

We will be interested in controlling the probability that the process $W-Y$ evaluated at all points $\sigma \cap [0,t]$ stays below a curve $\gamma_t = (\gamma_{t,u} :\: u \geq 0)$, satisfying for all $0 \leq u \leq t$,
\begin{equation}
\label{e:A3}
-\delta^{-1} \leq \gamma_{t,u} \leq \delta^{-1} \big(1+ (\wedge^t(u))^{1/2-\delta}\big) 
\quad, \qquad
\wedge^t(u) := u \wedge (t-u) \,,
\end{equation}
where $\delta \in (0,1/2)$ (to avoid using too many parameters we will use one $\delta$ in multiple conditions). The first statement is an upper bound. In this case, we might as well use the bounding function as the barrier curve itself.
\begin{prop}
\label{p:A2}
Suppose that $W, Y, \cN$ are defined as above with respect to some $\lambda > 0$ and $\delta \in (0,1/2)$. Then there exists $C = C(\lambda, \delta)$ such that for all $t \geq 0$, $x,y \in \bbR$,
\begin{equation}
\label{e:2.11}
\begin{multlined}
\bbP_{0,x}^{t,y} \Big( \max_{k :\:\sigma_k \in [0,t]} \big(W_{\sigma_k} - \delta^{-1} \big(1+(\wedge^t(\sigma_k))^{1/2-\delta}\big) - Y_{\sigma_k}\big) \leq 0 \Big) \\
\leq C \frac{(x^-+1)(y^-+1)}{t} \,,
\end{multlined}
\end{equation}

Moreover, there exists $C'= C'(\lambda, \delta)$ such that
for all $t \geq 0$ and all $x,y \in \bbR$ such that $xy \leq 0$,

\begin{equation}
\label{e:2.12}
\begin{multlined}
\bbP_{0,x}^{t,y} \Big( \max_{k:\: \sigma_k \in [0,t]} \big(W_{\sigma_k} - \delta^{-1} \big(1+(\wedge^t(\sigma_k))^{1/2-\delta}\big) - Y_{\sigma_k}\big) \leq 0 \Big) \\
\leq C' \frac{\big(x^- + \rme^{-\sqrt{2\lambda}(1-\delta) x^+}\big) 
\big(y^- + \rme^{-\sqrt{2\lambda}(1-\delta) y^+}\big)}{t} 
\exp\Big(\tfrac{(y-x)^2}{2t}\Big) \,.
\end{multlined}
\end{equation}
\end{prop}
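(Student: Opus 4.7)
My plan is to split the time interval at the midpoint $t/2$ and reduce the estimate to a pair of one-sided barrier probabilities. Conditioning on $W_{t/2} = z$, the Brownian bridge from $(0,x)$ to $(t,y)$ decomposes into two independent bridges on $[0, t/2]$ and $[t/2, t]$, while the Poisson process $\cN$ and the decorations $Y$, restricted to the two halves, are also independent. The left-hand side of \eqref{e:2.11} can therefore be written as
\begin{equation*}
\int_\bbR p^{(1)}(x,z)\, p^{(2)}(z,y)\, \bbP_{0,x}^{t,y}(W_{t/2} \in \rmd z),
\end{equation*}
where $p^{(1)}, p^{(2)}$ are the analogous barrier probabilities on each half. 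The midpoint density of the bridge is Gaussian with mean $(x+y)/2$ and variance $t/4$; once one factors out the bridge normalization $(2\pi t)^{-1/2}\rme^{-(y-x)^2/(2t)}$, this is what produces the $\rme^{(y-x)^2/(2t)}$ factor appearing in \eqref{e:2.12}.

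The one-sided estimate I aim for has the form $p^{(1)}(x,z) \leq C(x^- + 1)(z^- + 1)/s$ with $s = t/2$. To prove it, first bound $Y_{\sigma_k} \leq |Y_{\sigma_k}|$ so that the effective barrier is non-negative. On the event $\{\max_{k:\,\sigma_k \in [0,s]} |Y_{\sigma_k}| \leq M\}$, the discrete barrier event is implied by the continuous-time event that the bridge stays below $\delta^{-1}(1 + u^{1/2-\delta}) + M$ on $[0,s]$, to which one applies the classical reflection-based estimate for a Brownian bridge staying below a concave curve. The sub-square-root growth of $\gamma_{t,\cdot}$ is essential: near the midpoint the curve is $O(t^{1/2-\delta})$, much smaller than the typical spread $O(\sqrt{t})$ of the bridge, so it produces only a multiplicative correction. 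Averaging over $M$ using the exponential tail of $Y$ in \eqref{e:A1} and over the number of Poisson points (which is Poisson with mean $\lambda s$) absorbs the dependence on $M$ into the constant $C$, provided $M$ is taken of order $\delta^{-1}\log(1+s)$.

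For the refined bound \eqref{e:2.12} when $x > 0$ (the $y > 0$ case is symmetric), the Brownian motion starts strictly above the barrier, so the event forces no Poisson point to occur until $W$ first dips to the effective barrier. Let $T := \inf\{u \geq 0 : W_u \leq \gamma_{t,u} + Y_u\}$. Conditioning on $T = \tau$, the probability of no point of $\cN$ in $[0,\tau]$ is $\rme^{-\lambda\tau}$, and the density of $T$ is, up to polynomial corrections coming from $\gamma$ and $Y$, of order $(x/\tau^{3/2})\rme^{-x^2/(2\tau)}$. Optimizing the combined exponent $-x^2/(2\tau) - \lambda\tau$ over $\tau > 0$ yields the factor $\rme^{-\sqrt{2\lambda}\,x}$; the $(1-\delta)$ accommodates the fact that the effective barrier sits at height $O(\tau^{1/2-\delta}) + O(\log \tau)$ rather than at $0$, which is handled by absorbing a fraction of the optimal exponent. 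After time $T$, the problem reduces to a bridge from barrier level to $y$, to which the first bound applies; integrating over $\tau$ and the starting position then yields \eqref{e:2.12}.

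The main obstacle, in my view, is combining the three sources of randomness—discrete Poisson sampling, decorations with only exponential tails, and an unbounded (albeit slowly growing) barrier—while preserving the clean multiplicative $(x^-+1)(y^-+1)/t$ form. A naive union bound over the $\approx \lambda t$ Poisson points in $[0,t]$ would waste a factor of $t$, so one must instead extract a single random variable $M := \max_{k:\,\sigma_k\in[0,t]} |Y_{\sigma_k}|$ and use it at the level of the continuous-time event; the exponential tail in \eqref{e:A1} is just barely enough for this, with the logarithmic choice of $M$ contributing a negligible $M^2/t$ correction. Tracking all of this carefully through the midpoint integral—and ensuring constants depend only on $\lambda$ and $\delta$—is the bulk of the technical work.
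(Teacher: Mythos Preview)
Your midpoint decomposition and the heuristic for the exponential factor in \eqref{e:2.12} are both reasonable, but the core of your argument for \eqref{e:2.11} has the implication running the wrong way. You write that on $\{\max_k |Y_{\sigma_k}| \leq M\}$ the discrete barrier event is \emph{implied by} the continuous-time event that the bridge stays below $\delta^{-1}(1+u^{1/2-\delta})+M$ on $[0,s]$. That containment yields only $\bbP(\text{continuous}) \leq \bbP(\text{discrete})$, a lower bound on the quantity you want to bound from above. For an upper bound you need the reverse: the Poisson-sampled event must be shown to force a continuous-time event. This is the harder direction, because between consecutive Poisson arrivals the bridge is completely unconstrained and can overshoot the barrier. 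One has to control the Brownian oscillation over each inter-arrival gap (of typical length $\lambda^{-1}$) and argue that the discrete event forces, say, $\max_{u \in [0,s]} \big(W_u - \gamma_{t,u}\big) \leq M + \Delta$ for a random overshoot $\Delta$ with tails good enough to be integrated against the $(x^-+1)(z^-+1)/s$ barrier estimate. This passage from Poisson sampling to continuous time is exactly where the work in such estimates lies, and your outline omits it entirely.

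A secondary issue concerns your treatment of \eqref{e:2.12}. The stopping time $T = \inf\{u \geq 0 : W_u \leq \gamma_{t,u} + Y_u\}$ is ill-posed: $(Y_u)_{u \geq 0}$ is an uncountable family of independent random variables, not a measurable process, and only the values $Y_{\sigma_k}$ at Poisson times enter the event. More substantively, the event does \emph{not} force $\cN([0,T]) = 0$: Poisson points may land while $W$ is still near $x$, provided each corresponding $Y_{\sigma_k}$ happens to be large enough to satisfy the constraint. Your optimization $\rme^{-x^2/(2\tau) - \lambda\tau} \leq \rme^{-\sqrt{2\lambda}\,x}$ correctly captures the dominant scenario, but a rigorous upper bound must also dispose of the alternative in which several early Poisson points are each ``rescued'' by a large decoration; this is where the tail hypothesis \eqref{e:A1} enters and is what ultimately accounts for the slack factor $(1-\delta)$ in the exponent.
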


\medskip
For an asymptotic statement, we naturally need to control the limiting behavior of both the decorations and the family of curves $\gamma = (\gamma_t)_{t \geq 0}$. For the former we assume that
\begin{equation}
\label{e:A2.5}
Y_u \overset{u \to \infty} \Longrightarrow Y_\infty \,,
\end{equation}
for some random variable $Y_\infty$. For the latter, we require that for all $u \geq 0$,
\begin{equation}
\label{e:A4}
\gamma_{t,u} \overset{t \to \infty} \longrightarrow \gamma_{\infty, u}
\ , \ \ 
\gamma_{t,t-u} \overset{t \to \infty}  \longrightarrow \gamma_{\infty, -u} \,,
\end{equation}
where $\gamma_{\infty, u}, \gamma_{\infty, -u} \in \bbR_+$ (with slight abuse, we shall use the notation $\gamma_{\infty, -0}$ for the limit of $\lim_{t \to \infty} \gamma_{t,t}$). 
We then have
\begin{prop}
\label{p:A3}
Suppose that $W, Y, \cN$ and $\gamma$ are defined as above with respect to some $\lambda>0$ and $\delta \in (0,1/2)$. Then there exists non-increasing positive functions $f,g: \bbR \to (0,\infty)$ depending on $\delta, \lambda$, $\gamma$ and $Y$, such that 
\begin{equation}
\label{e:A6}
\bbP_{0,x}^{t,y} \Big( \max_{k :\: \sigma_k \in [0,t]} \big(W_{\sigma_k} - \gamma_{t,\sigma_k} - Y_{\sigma_k} \big) \leq 0 \Big)
\sim 2 \frac{f(x)g(y)}{t} 
\ \ \text{as } t \to \infty \,,
\end{equation}
uniformly in $x,y$ satisfying $x, y \leq 1/\epsilon$ and $(x^- + 1)(y^- +1) \leq t^{1-\epsilon}$, for any fixed $\epsilon > 0$. Moreover, 
\begin{equation}
\label{e:A6.1}
\lim_{x \to \infty} \frac{f(-x)}{x} = \lim_{y \to \infty} \frac{g(-y)}{y} = 1 \,.
\end{equation}
\end{prop}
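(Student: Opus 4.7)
I would prove Proposition~\ref{p:A3} by a Markov decomposition of the bridge into an early segment $[0,T]$, a middle segment $[T,t-T]$, and a late segment $[t-T,t]$, with $T$ large and fixed, sent to infinity after $t$. The factors $f(x)$ and $g(y)$ will capture the probability of the decorated process surviving on the early and late segments respectively; time reversal of the bridge together with the invariance in law of $\cN$ under reversal handles the two contributions symmetrically, with $f$ built from $\gamma_{\infty,\cdot}$ and $g$ from the reversed profile $\gamma_{\infty,-\cdot}$.

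\textbf{Step 1 (half-line ballot asymptotic).} First I would study the half-line survival density
\begin{equation*}
p_T(x;z)\,\rmd z := \bbP_{0,x}\Big( \max_{k:\sigma_k \in [0,T]} \big(W_{\sigma_k} - \gamma_{\infty,\sigma_k} - Y_{\sigma_k}\big) \leq 0,\ W_T \in \rmd z \Big),
\end{equation*}
and show, by adapting Caravenna--Chaumont / Denisov--Wachtel type ballot theorems to this setting, that $\sqrt{T}\, p_T(x;z) \to f(x)\mu(z)$ as $T \to \infty$, for some non-trivial density $\mu$ on $(-\infty,0]$ and some positive function $f$ which decays rapidly as $x \to +\infty$ and satisfies $f(-x)\sim x$ as $x\to\infty$. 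The analogue with the reversed curve $\gamma_{\infty,-\cdot}$ yields $g$.

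\textbf{Step 2 (bridge decomposition and middle estimate).} Conditional on $(W_T, W_{t-T})$, the three bridge segments are independent Brownian bridges, so
\begin{equation*}
\bbP_{0,x}^{t,y}(\text{surv.}) = \int\!\!\int p^{(t)}_T(x;z_1)\, q_{t-2T}(z_1,z_2)\, \tilde p^{(t)}_T(y;z_2)\, \frac{\phi_{t-2T}(z_2-z_1)}{\phi_t(y-x)}\, \rmd z_1\, \rmd z_2,
\end{equation*}
with $\phi_s(u):= (2\pi s)^{-1/2}\rme^{-u^2/(2s)}$, $p^{(t)}_T$ and $\tilde p^{(t)}_T$ the end-segment densities with $\gamma_{t,\cdot}$ in place of $\gamma_{\infty,\cdot}$, and $q_{t-2T}(z_1,z_2)$ the middle-segment bridge survival probability. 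For bounded $z_1,z_2\leq 0$ I would then show that $q_{t-2T}(z_1,z_2) \sim 2z_1^- z_2^-/t$ as $t\to\infty$, by combining the reflection-principle estimate for the bare Brownian bridge with an entropic-repulsion argument showing that the decorations and the slow drift $\gamma_{t,\cdot}$ of growth $O(s^{1/2-\delta})$ contribute only a $1+o(1)$ factor once the process is kept uniformly below the barrier on the long middle interval.

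\textbf{Step 3 (assembly and identification of $f(-x)$).} By assumption~\eqref{e:A4}, $p^{(t)}_T \to p_T$ and $\tilde p^{(t)}_T \to \tilde p_T$ as $t\to\infty$ for fixed $T$, and the Gaussian ratio $\phi_{t-2T}(z_2-z_1)/\phi_t(y-x) \to 1$ uniformly for bounded $z_i, x, y$. Combining this with Steps~1--2 and absorbing the $z_i^-$ factors into the normalization of $\mu$ produces $\bbP_{0,x}^{t,y}(\text{surv.})\sim 2 f(x)g(y)/t$. For the identification $f(-x)\sim x$ I would shift the origin by $-x$: when the starting point is very negative, the drift $\gamma_{\infty,\cdot}$ and the decorations ($O(1)$-tight by~\eqref{e:A1}) become negligible on the scale of $x$, and the probability reduces to the classical reflection-principle statement that a Brownian motion from $-x$ avoids $0$ on $[0,T]$ with probability $\sim \sqrt{2/(\pi T)}\, x$, producing the linear factor.

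\textbf{Main obstacle.} The crux is the ballot-type convergence of Step~1 in the presence of Poissonian sampling times, non-identically distributed decorations that only converge in law via~\eqref{e:A2.5}, and a drift $\gamma_{\infty,\cdot}$ that is not time-homogeneous. Standard ballot theorems assume i.i.d.\ increments under a flat barrier, so the technical work is to adapt the Denisov--Wachtel machinery to absorb the slowly-varying curve against the Brownian fluctuation scale $\sqrt{T}$, and to extract the limiting $f$ via a renewal-type argument. A second delicate point is the uniformity over the full range $(x^- +1)(y^- +1)\leq t^{1-\epsilon}$: when $x$ or $y$ themselves grow with $t$, the ``start-in-bounded-region'' argument from Step~3 no longer applies directly, and one must combine the upper bound~\eqref{e:2.12} of Proposition~\ref{p:A2} as a dominator with Brownian rescaling in order to extend the asymptotic into this regime.
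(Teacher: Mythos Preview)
Your three–segment bridge decomposition is exactly the standard route and almost certainly the one taken in the paper's supplement (the paper itself only says the proofs are ``standard in flavor'' and deferred). Two points, however, need to be tightened before the argument goes through.

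First, the scaling in Step~1 is misstated. For fixed $z<0$ one has $p_T(x;z)\asymp T^{-3/2}$, not $T^{-1/2}$: by reflection $p_T(x;z)=(2\pi T)^{-1/2}\big(\rme^{-(z-x)^2/2T}-\rme^{-(z+x)^2/2T}\big)\sim 2x^-z^-/\sqrt{2\pi}\,T^{-3/2}$. What you presumably want is the weak statement that $\sqrt{T}\,\bbP_{0,x}(\text{surv on }[0,T])\to cf(x)$ together with $W_T/\sqrt{T}\Rightarrow \mu$ conditionally on survival; as written, with $\mu$ a density in the original variable $z$, the limit is identically zero.

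Second, and more seriously, the assertion $q_{t-2T}(z_1,z_2)\sim 2z_1^-z_2^-/t$ in Step~2 is not correct for \emph{fixed} $z_1,z_2$. The middle segment is again a decorated, Poisson–sampled bridge under a curved barrier, so its asymptotic is of the form $2\tilde f_T(z_1)\tilde g_T(z_2)/t$ for some functions $\tilde f_T,\tilde g_T$ that equal $z^-$ only asymptotically as $|z|\to\infty$; the entropic–repulsion comparison with the bare reflection estimate gives you matching upper and lower bounds only in that regime. In other words, Step~2 as stated is a restatement of the very proposition you are proving, so there is a circularity to break. The usual cure is either (i) to first prove the result for a homogenized process (i.i.d.\ decorations $Y_\infty$, flat barrier) via a genuine random–walk ballot theorem, then transfer to the inhomogeneous case by comparison, or (ii) to run the decomposition but keep the unknown $\tilde f_T,\tilde g_T$, observe that after integrating against $p_T(x;\cdot)$ the dominant contribution comes from $|z_1|\asymp\sqrt{T}$ where $\tilde f_T(z_1)\sim z_1^-$, and then show that $\int p_T(x;z)\,z^-\,\rmd z$ converges as $T\to\infty$ (for bare Brownian motion this integral equals $x^-$ exactly, by optional stopping). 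You gesture at this in Step~3 (``absorbing the $z_i^-$ factors''), but the order of limits and the replacement $\tilde f_T\rightsquigarrow z^-$ is precisely where the work lies and cannot be skipped.
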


\begin{rem}[Monotonicity w.r.t. boundary conditions] \label{r:monotonicity} 
Notice that if $x \leq x'$ and $y \leq y'$, then for all $t \geq 0$ we have
\begin{equation}\label{e:montonicity}
\begin{multlined}
\bbP_{0,x}^{t,y} \Big( \max_{k :\: \sigma_k \in [0,t]} \big(W_{\sigma_k} - \gamma_{t,\sigma_k} - Y_{\sigma_k} \big) \leq 0 \Big) \\
\geq \bbP_{0,x'}^{t,y'} \Big( \max_{k :\: \sigma_k \in [0,t]} \big(W_{\sigma_k} - \gamma_{t,\sigma_k} - Y_{\sigma_k} \big) \leq 0 \Big).
\end{multlined}
\end{equation} 
Indeed, one can pass from a Brownian bridge from $x$ to $y$ to a Brownian bridge from $x'$ to $y'$ replacing $W_s$ by
$W_s - \Big( \frac{s}{t} (y'-y) +(x'-x) \big(1-\frac{s}{t} \big) \Big)$
inside the probability brackets. Since the above interpolation function is positive for every $s \in [0,t]$ we can simply lower bound it by zero to obtain \eqref{e:montonicity}. In particular, it is straightforward to show that if the convergence from Proposition~\ref{p:A3} holds, then both $f$ and $g$ are non-increasing.   
\end{rem}

We also need to know that the above asymptotics are continuous (in the sense specified below) in
$Y$ and $\gamma$. To this end for each $r \geq 0$, let $Y^{(r)}$ be a collection of random variables as $Y$ above and $\gamma^{(r)}$ be a function as $\gamma$ above, satisfying~\eqref{e:A1} and~\eqref{e:A3} uniformly for all $r \geq 0$ with some $\delta \in (0,1/2)$. Suppose that~\eqref{e:A2.5} holds for $Y^{(r)}_u$ with the limit denoted by $Y_\infty^{(r)}$ and that~\eqref{e:A4} holds with the limits denoted by $\gamma^{(r)}_{\infty, u}$ and $\gamma^{(r)}_{-\infty, u}$. 
Then
\begin{prop}
\label{p:A4}
Suppose that $W, Y, \cN, \gamma$ and $Y^{(r)}, \gamma^{(r)}$ for $r \geq 0$ are defined as above with respect to some $\lambda>0$, $\delta \in (0,1/2)$. Let $f^{(r)}$, $g^{(r)}$ be the functions $f$, $g$ respectively given in Proposition~\ref{p:A3} applied to $W, Y^{(r)}, \cN$ and $\gamma^{(r)}$. Assume that
\begin{equation}
\label{e:543}
\forall u \in [0,\infty]\!:\, Y^{(r)}_u \overset{r \to \infty}{\Longrightarrow} Y_u
\quad , \qquad
\forall u \in [0,\infty)\!:\,
\gamma^{(r)}_{\infty,\pm u} \overset{r \to \infty}{\longrightarrow} \gamma_{\infty, \pm u} \,.
\end{equation}
Then for all $x \in \bbR$,
\begin{equation}
f^{(r)}(x) \overset{r \to \infty} \longrightarrow f(x) 
\ , \quad
g^{(r)}(x) \overset{r \to \infty} \longrightarrow g(x) \,,
\end{equation}
with $f,g$ given by Proposition~\ref{p:A3} applied to $W, Y, \cN$ and $\gamma$. In particular, if $Y_\infty^{(r)} = Y_\infty$ and $\gamma_{\infty, -u}^{(r)} = \gamma_{\infty, -u}$ for all $r \geq 0$ and $u \geq 0$, then $g^{(r)}(x) = g(x)$ for all $r \geq 0$.	
\end{prop}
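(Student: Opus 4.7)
My plan is to prove $f^{(r)}(x) \to f(x)$; the argument for $g$ is symmetric, and the concluding ``in particular'' statement will then be immediate since my argument separates the contributions of left and right profiles. Fix any $y_0 < 0$, and write
\[
P^{(r)}(t,x,y_0) := \bbP_{0, x}^{t, y_0}\Big(\max_{k:\sigma_k \in [0,t]}\big(W_{\sigma_k} - \gamma^{(r)}_{t,\sigma_k} - Y^{(r)}_{\sigma_k}\big) \leq 0\Big).
\]
By Proposition~\ref{p:A3}, $t \cdot P^{(r)}(t,x,y_0) \to 2 f^{(r)}(x) g^{(r)}(y_0)$ as $t \to \infty$. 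The strategy is to show $\lim_{r \to \infty} \lim_{t \to \infty} t \cdot P^{(r)}(t,x,y_0) = 2 f(x) g(y_0)$ via a bridge decomposition, and then, by the same argument applied to extract $g^{(r)}$, conclude $f^{(r)} \to f$ and $g^{(r)} \to g$.

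The first step is uniform tightness: the upper bound \eqref{e:2.11}, whose constants depend only on $\lambda, \delta$, applies uniformly in $r$ and gives $f^{(r)}(x) g^{(r)}(y_0) \leq C(x^-+1)(y_0^-+1)/2$. A uniform lower bound on $g^{(r)}(y_0)$, obtained by an explicit construction on a short window near time $t$ using \eqref{e:A1}, then yields $\sup_r f^{(r)}(x) < \infty$ for each $x$. A diagonal subsequence argument extracts $r_k \to \infty$ along which $f^{(r_k)} \to f^*$ and $g^{(r_k)} \to g^*$ on a countable dense set. The second step is to identify $(f^*, g^*)$ with $(f, g)$ via a bridge decomposition at times $s$ and $t-s$ for a fixed large $s$: conditionally on $(W_s, W_{t-s})$, the survival event factorizes (up to uniformly vanishing error as $s \to \infty$ and $t-s \to \infty$) into left $[0,s]$, middle $[s, t-s]$, and right $[t-s, t]$ pieces. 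After taking $t \to \infty$, the left piece depends only on the left-profile data $(\gamma^{(r)}_{\infty, u}, Y^{(r)}_u)_{u \in [0,s]}$; by \eqref{e:543} and a Skorokhod coupling of the finitely many Poisson-sampled decorations at times $\sigma_k \leq s$, it converges as $r_k \to \infty$ to its analogue for the limit system. The right piece is symmetric, involving $\gamma^{(r)}_{\infty, -u}$ and $Y^{(r)}_\infty$, which also converge by \eqref{e:543}; the middle piece tends to a universal factor. Combining the three and then sending $s \to \infty$ gives $f^*(x) g^*(y_0) = f(x) g(y_0)$. Finally, the normalization \eqref{e:A6.1}, which is uniform in $r$ thanks to the same uniform $(\lambda, \delta)$-hypotheses, pins down $f^* = f$ and $g^* = g$. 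Since every subsequence admits a further subsequence converging to $(f, g)$, the full sequence converges.

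The main obstacle is ensuring that the error in the bridge decomposition is small uniformly in $r$ for large $s$—in particular, that paths making unusual excursions or decorations taking exceptional values in the middle window $[s, t-s]$ contribute negligibly, independently of $r$. This is handled by applying the uniform upper bound \eqref{e:2.11} to the sub-bridge on $[s, t-s]$ together with the uniform tail estimate \eqref{e:A1}; both yield constants depending only on $\lambda$ and $\delta$, hence uniform in $r$. The ``in particular'' statement then follows immediately: if $Y^{(r)}_\infty$ and $\gamma^{(r)}_{\infty, -u}$ are the same for all $r$, the right-piece functional in the decomposition is identically the same function of $y$ for every $r$, so $g^{(r)} = g$ for all $r$.
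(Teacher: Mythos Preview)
The paper defers the proof of this proposition to the supplementary material, so a direct comparison to the paper's argument is not possible from the text provided. Your compactness-plus-identification strategy is a natural one and the broad architecture is sound: uniform upper bounds from~\eqref{e:2.11} give tightness of $(f^{(r)}(x))_r$ and $(g^{(r)}(y))_r$, a bridge split at times $s$ and $t-s$ separates left/middle/right contributions by the Markov property, and the hypothesis~\eqref{e:543} controls the left and right pieces after $t\to\infty$.

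The weak point is the sentence ``the middle piece tends to a universal factor''. After conditioning on $(W_s,W_{t-s})=(z,z')$ and sending $t\to\infty$, the middle piece is governed by Proposition~\ref{p:A3} applied to the system shifted by $s$, and therefore contributes a factor of the form $2\tilde f^{(r,s)}(z)\,\tilde g^{(r,s)}(z')/t$. These intermediate functions still depend on $r$ through $\gamma^{(r)}$ and $Y^{(r)}$ on the bulk, so nothing is ``universal'' at fixed $s$. What you presumably mean is that, after integrating against the bridge density and sending $s\to\infty$, the dominant $z,z'$ are of order $-\sqrt s$, where~\eqref{e:A6.1} gives $\tilde f^{(r,s)}(z)\sim -z$ and $\tilde g^{(r,s)}(z')\sim -z'$, a genuinely $r$-free asymptotic. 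For this to close the argument you must show that the convergence in~\eqref{e:A6.1} holds \emph{uniformly in $r$} (and in the shift $s$), or equivalently that the upper and lower bounds implicit in the proof of Proposition~\ref{p:A3} depend only on $(\lambda,\delta)$. You assert this, but it is exactly the nontrivial content of the proposition and cannot simply be read off from~\eqref{e:2.11}; a matching uniform lower bound on the middle-piece survival probability is needed. The same uniformity issue recurs when you invoke~\eqref{e:A6.1} at the end to deduce $f^*=f$ from $f^*g^*=fg$: you need $f^{(r_k)}(-x)/x\to 1$ uniformly along the subsequence to pass this through the pointwise limit $f^*$. If you can establish that single uniform estimate, the rest of the scheme goes through, and your derivation of the ``in particular'' clause is then correct as written.
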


\subsection{Spinal Decomposition}
\label{ss:Spine}
A key tool for reducing the computation of moments of the number of particles satisfying a certain condition is the so-call spinal decomposition, in the form of the two lemmas below. We refer the reader to~\cite{HarRob2011} for a more general and thorough treatment of this method, as well as an historical overview.

For integer $k \geq 1$, the $k$-spine branching Brownian motion describes particles which branch and diffuse as in the original process, only that in addition they may carry ``marks'' indexed by the set $\{1, \dots, k\}$, which affect their branching and/or diffusion laws. For our purposes, we can assume that the diffusion law is always that of a standard Brownian motion and splitting is always binary, regardless of the carried marks. What is affected by the marks, is the branching rate, which is $2^m$ if the particle carries $m$ marks. In addition, once a particle branches, each mark is transferred to one of its two children with equal probability and independently of the other marks. 

As before, the set of particles at time $t$ will be denoted by $L_t$, which again we equip with the genealogical metric $\rmd = \rmd_t$. The positions of particles will be given by the random collection $h_t = (h_t(x) :\: x \in L_t)$, again exactly as before. The new information, namely the location of the marks at time $t$, will be denoted by the collection $X_t = (X_t(l) :\: l=1, \dots, k)$, where $X_t(l) \in L_t$ is the particle holding mark $l$ at time $t$. The genealogical line of decent of particle $X_t(l)$, namely the function $t \mapsto X_t(l)$, will be referred to as the $l$-th spine of the process. 

We shall denote by $\wt{\bbP}^{(k)}$ the underlying probability measure and by $\wt{\bbE}^{(k)}$ the corresponding expectation. To simplify the notation in the case $k=1$, we shall write $\wt{\bbP}$, $\wt{\bbE}$ and $X_t$ in place of $\wt{\bbP}^{(1)}$, $\wt{\bbE}^{(1)}$ and $X_t(1)$. Note that in the case $k=0$ the process is reduced to a regular branching Brownian motion, in which case we will keep using the notation $\bbP$, $\bbE$ and use $(\cF_t :\: t \geq 0)$ to denote its natural filtration.

The first lemma shows how to reduce first moment computations for regular branching Brownian motion  to expectations involving the $1$-spine measure. To avoid integrability issues, we state it for a bounded function, although this is entirely not necessary.
\begin{lem}[Many-to-one]
\label{l:4.1}
Let $F = (F(x) :\: x \in L_t)$ be a bounded $\cF_t$-measurable real-valued random function on $L_t$. Then, 
\begin{equation}
\bbE \Big( \sum_{x \in L_t} F(x) \Big)
= \rme^{t} \,\wt{\bbE} F(X_t) \,.
\end{equation}
\end{lem}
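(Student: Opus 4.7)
My plan is to prove the identity via a Radon--Nikodym computation relating $\wt\bbP$ to $\bbP$, restricted to the filtration $\cF_t$ generated by the tree structure and particle positions (but not the spine identity). The target is the pair of statements
\begin{equation*}
\left.\frac{\rmd\wt\bbP}{\rmd\bbP}\right|_{\cF_t} = |L_t|\,\rme^{-t},
\end{equation*}
and, under $\wt\bbP$, $X_t$ is distributed uniformly over $L_t$ given $\cF_t$. Once these are in hand, $\cF_t$-measurability of $F$ yields $\wt\bbE[F(X_t)\mid \cF_t] = |L_t|^{-1}\sum_{x \in L_t} F(x)$, and passing from $\wt\bbE$ back to $\bbE$ via the above density cancels the $|L_t|^{-1}$ and produces the asserted factor $\rme^{-t}$, which rearranges to the claim.

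To establish the two facts, I would compare the densities of a fully tagged configuration---the tree with positions together with a chosen leaf designated as the spine---under the two measures. At every branching event on the spine's ancestral line, the doubled branching rate under $\wt\bbP$ (density $2$ instead of $1$) is exactly cancelled by the fair-coin assignment of the mark to one of the two children (density $1/2$). At branchings off the spine, both laws assign density $1$, and the Brownian motions governing the inter-branching diffusion are identically distributed under both laws. The only remaining discrepancy lies in the survival densities: whenever $m$ particles are alive, the total branching rate is $m+1$ under $\wt\bbP$ but $m$ under $\bbP$, since the single spinal particle contributes rate $2$ rather than $1$. Integrated over $[0,t]$ this produces a uniform factor $\rme^{-t}$ that is independent of which leaf was tagged, so summing the $\rme^{-t}$-weighted configurations over the $|L_t|$ possible spine choices recovers the Radon--Nikodym density. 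The equal weighting across spine choices is also precisely the uniform conditional law.

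The boundedness hypothesis on $F$ serves only to ensure integrability in the manipulations above; the identity extends to arbitrary nonnegative $F$ by monotone convergence. The main obstacle is the bookkeeping in the density comparison, where the cancellation between the doubled spinal rate and the fair-coin inheritance must be combined with the modified inter-branching survival factor. Once this is carried out correctly, the pleasant consequence is that the density of a fully tagged configuration is independent of the spine choice, so both the $|L_t|$ prefactor and the uniform conditional law appear automatically.
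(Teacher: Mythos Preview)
Your argument is correct and is precisely the standard change-of-measure proof of the many-to-one lemma: compute the Radon--Nikodym derivative $\rmd\wt\bbP/\rmd\bbP|_{\cF_t}=|L_t|\rme^{-t}$ by exploiting the exact cancellation between the doubled spinal branching rate and the fair mark-inheritance, and then use the resulting uniform conditional law of $X_t$. The paper itself does not supply a proof but simply states the lemma and refers the reader to~\cite{HarRob2011}; your approach is exactly the one found there (and in the broader spine literature), so nothing is missing.
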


The second lemma is suitable for second moment computations.
\begin{lem}[Many-to-two]
\label{l:4.2}
Let $F = (F(x,y) :\: x,y \in L_t)$ be a bounded $\cF_t$-measurable real-valued random function on $L_t \times L_t$. Then, 
\begin{equation}
\bbE \Big( \sum_{x,y \in L_t} F(x,y) \Big)
= \rme^{3t}\, \wt{\bbE}^{(2)} \Bigl(F\big(X_t(1), X_t(2)\big) \, \rme^{-\rmd(X_t(1), X_t(2))} \Bigr)\,.
\end{equation}
\end{lem}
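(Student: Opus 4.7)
The plan is to reduce to the Many-to-one lemma (Lemma~\ref{l:4.1}), exploiting that the only genuinely new feature of Lemma~\ref{l:4.2} is the joint genealogy of two particles. First I would split
$$\bbE \sum_{x,y \in L_t} F(x,y) = \bbE \sum_{x \in L_t} F(x,x) + \bbE \sum_{x \neq y \in L_t} F(x,y),$$
handle the diagonal immediately by Lemma~\ref{l:4.1} (obtaining $\rme^t \wt{\bbE} F(X_t, X_t)$), and decompose the off-diagonal sum according to the time $s \in (0,t)$ of the most recent common ancestor (MRCA) of $(x,y)$.

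For the off-diagonal part, I would fix a branching particle $z \in L_s$ with children $z_1, z_2$, and observe that the two sub-BBMs rooted at $z_1, z_2$ are independent given $\cF_s$. Applying Lemma~\ref{l:4.1} in both sub-BBMs (Fubini allows freezing the inner variable for each application) converts the double descendant sum into $\rme^{2(t-s)}$ times an expectation over two independent $1$-spines starting from the common height $h_s(z)$. A further application of Lemma~\ref{l:4.1} to the outer sum $\sum_{z \in L_s}$, together with the rate-$1$ branching density in $ds$ and a combinatorial factor $2$ for ordered pairs, yields
$$\bbE \sum_{x \neq y} F(x,y) = \int_0^t 2\rme^{2t-s}\, \wt{\bbE}\Big[\wt{\bbE}^{\otimes 2}_{W_s} F\big(X^{(1)}_{t-s}, X^{(2)}_{t-s}\big)\Big]\, ds,$$
where the outer $\wt{\bbE}$ is over a single $1$-spine up to time $s$ with end position $W_s$, and the inner is over two independent $1$-spines from $W_s$ evolving for time $t-s$.

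Next I would identify this integrand with a conditional expectation under $\wt{\bbP}^{(2)}$. Under the $2$-spine measure the doubly marked particle branches at rate $2^2 = 4$, and at each branching the marks separate independently with probability $1/2$; hence the separation time $T$ is $\mathrm{Exp}(2)$. Conditional on $T=s$, the thinning property of Poisson processes shows that the non-separating branchings on $[0,s]$ form an independent rate-$2$ Poisson process, each producing an off-spine BBM, so the dynamics up to $s$ coincide with those of a $1$-spine; after time $s$, the two marked children evolve as two independent $1$-spines from the common position. So $\wt{\bbE}^{(2)}[F \,|\, T=s]$ agrees with the bracketed expression. Since the density of $T$ on $(0,t)$ is $2\rme^{-2s}$ and $\rmd = t-T$ on $\{T<t\}$, a simple rearrangement gives $\rme^{3t}\,\wt{\bbE}^{(2)}[F\,\rme^{-\rmd}\,\mathbf{1}_{\{T<t\}}]$. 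The complementary event $\{T \geq t\}$ has probability $\rme^{-2t}$, and on it $X_t(1) = X_t(2)$ with $\rmd = 0$ and the common particle has $1$-spine law; so $\rme^{3t}\,\wt{\bbE}^{(2)}[F\,\rme^{-\rmd}\,\mathbf{1}_{\{T \geq t\}}]$ reproduces exactly the diagonal $\rme^t\,\wt{\bbE} F(X_t,X_t)$, completing the identification.

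The main technical obstacle is verifying that the intrinsic $2$-spine dynamics, conditionally on $T=s$, produces the same joint law on $(X_t(1), X_t(2))$ as the ``glued'' construction (a single $1$-spine up to $s$, then two independent $1$-spines from the common position) --- \emph{including} matching the joint Gaussian distribution of heights via the covariance in the definition of $h$. This is a bookkeeping exercise combining the thinning of independent Poisson processes (separating and non-separating branchings are independent rate-$2$ Poisson processes) with the strong Markov property applied at the separation time $T$, and the fact that the Brownian trajectory traced by the mark-carrying particle is standard Brownian motion throughout $[0,t]$.
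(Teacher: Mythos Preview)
The paper does not prove Lemma~\ref{l:4.2}; it is stated as a known result with a reference to~\cite{HarRob2011} for a general treatment. Your proposal is a correct and standard derivation, and in fact your key identification --- that conditionally on the separation time the two-spine process has one-spine dynamics up to separation, via Poisson thinning --- is precisely the content of the paper's Remark~\ref{r:4.5}. The only point worth tightening is the off-diagonal bookkeeping: when you ``apply Lemma~\ref{l:4.1} to the outer sum $\sum_{z\in L_s}$'', you are really using the compensator formula for the point process of branching events (each particle in $L_{s-}$ branches at rate~$1$) followed by Many-to-one, and when you apply Many-to-one inside each subtree you should note that $F$, being $\cF_t$-measurable, may depend on the entire tree, so the argument needs the Markov property at time $s$ to factor the two descendant subtrees and the remaining forest. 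These are routine, and your sketch already flags them.
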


\begin{rem}
\label{r:4.5}
Observe that on the event $\{\rmd(X_t(1), X_t(2)) = r\}$ for some $0 \leq r \leq t$, at all branching events prior to time $t-r$, which occur at rate $4$, both spine particles ``chose'' to follow the same child. Since such events have probability $1/2$ and they are independent of each other, standard Poisson thinning arguments show that conditional on $\{\rmd (X_t(1), X_t(2)) = r\}$ branching along the line of descent of the two spine particles up to time $t-r$ occurs at rate $2$. Since the motion is not effected by the conditioning, we see that under the conditioning, the two-spine process behaves as a one-spine process up to time $t-r$, with the two spine particles identified. The same reasoning also implies that $\big(t-\rmd(X_t(1), X_t(2)) \big) \overset{law} = e \wedge t$, where $e$ is an exponential random variable with rate $2$.
\end{rem}

\subsection{Uniform Tail Estimates for the Centered Maximum}
\label{ss:Tails}
Even though asymptotics for the upper tail are well known, precise asymptotics for the lower tail are harder to find. Recall that we are writing $h_t^*$ for $\max_{x 
\in L_t} h_t(x)$.
\begin{lem}
\label{l:TailBBMMax}
There exists $C, C' > 0$, such that for all $t \geq 0$ and $u \geq 0$,
\begin{equation}
\bbP(h_t^* - m_t > u) \leq C u \rme^{- \sqrt{2} u}
\quad \text{and} \quad 
\bbP(h_t^* - m_t < -u) \leq  C' \rme^{-(2 - \sqrt{2}) u} \,.
\end{equation}
\end{lem}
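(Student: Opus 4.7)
The bound splits naturally into two cases, treated by different methods; note that for $t$ bounded, both tails decay at least Gaussianly (the centered maximum lives on a bounded interval, up to exponential corrections), so the non-trivial regime is $t$ large. Both rates can be anticipated from the FKPP traveling-wave ODE $\tfrac{1}{2}\phi'' + \sqrt{2}\phi' + \phi^2 - \phi = 0$ satisfied by the Bramson wave $\phi(x) = \lim_{t \to \infty} \bbP(\wh{h}_t^* \leq x)$: at $+\infty$ the linearization has double root $\lambda = -\sqrt{2}$, giving $1 - \phi(x) \sim Cx \rme^{-\sqrt{2}x}$ and hence the upper tail with its polynomial prefactor; at $-\infty$ the linearization has positive root $\lambda = 2 - \sqrt{2}$, giving $\phi(x) \sim C \rme^{(2-\sqrt{2})x}$ and hence the lower-tail rate. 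The task is to upgrade these limit identities into uniform-in-$t$ bounds.

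For the upper tail, the plan is to apply Bramson's truncated first-moment method. Let $G_t(u)$ be the event that a given particle's trajectory satisfies $h_s(x_s) \leq \sqrt{2}s - \tfrac{3}{2\sqrt{2}} \log^+(s \wedge (t-s)) + u + 1$ for every $s \in [0,t]$. Then I would split
\[
\bbP(\wh{h}_t^* > u) \leq \bbE\big[\#\{x \in L_t :\, \wh{h}_t(x) > u,\, x \in G_t(u)\}\big] + \bbP\big(\exists x \notin G_t(u)\big).
\]
By Lemma~\ref{l:4.1} (many-to-one), the first expectation equals $\rme^t \bbP(W_t > m_t + u,\, G_t(u))$. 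Conditioning on $W_t \in m_t + \rmd v$ with $v \geq u$, I would bound the conditional probability via Proposition~\ref{p:A2} applied with trivial decorations $Y \equiv 0$ and a dense Poissonian sampling $\cN$ (using $\max_k W_{\sigma_k} \leq \max_s W_s$ so that the sampled-max event contains the continuous one): after subtracting the straight line from $(0,0)$ to $(t, m_t+v)$ \emph{and} a further linear shift by $(u+1)(1 - s/t)$, the resulting bridge goes from $-(u+1)$ to $0$ and must stay below an envelope of at most logarithmic transverse extent, giving an upper bound of order $(u+1)/t$. Combined with $\bbP(W_t \in m_t + \rmd v) \asymp t \rme^{-t - \sqrt{2}v}\, \rmd v$ (which uses $\rme^{-m_t^2/(2t)} \asymp t^{3/2} \rme^{-t}$) and integration over $v \in [u, \infty)$, this produces $C(u+1) \rme^{-\sqrt{2} u}$. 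The complementary term is handled in the same spirit via a dyadic decomposition of $[0,t]$ and the same first-moment bound at each scale, where the log term in the envelope provides the crucial $s^{-3/2}$ discount.

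For the lower tail, I would set $p_t(u) := \bbP(\wh{h}_t^* < -u)$ and iterate using the branching Markov property at time $1$. Since $h_t^* = \max_{x \in L_1}(h_1(x) + h_{t-1}^{*,x})$ with the $h_{t-1}^{*,x}$ i.i.d.\ copies of $h_{t-1}^*$ independent of $h_1$, and $m_t - m_{t-1} = \sqrt{2} + \eps_t$ with $\eps_t = O(1/t)$,
\[
p_t(u) = \bbE\Big[\prod_{x \in L_1} p_{t-1}\big(u - \sqrt{2} - \eps_t + h_1(x)\big)\Big].
\]
Testing the ansatz $p_s(v) \leq A\, \rme^{-\alpha v} \wedge 1$ and keeping the dominant $|L_1| = 1$ contribution (probability $\rme^{-1}$, single Gaussian $W_1 \sim N(0,1)$ with $\bbE[\rme^{-\alpha W_1}] = \rme^{\alpha^2/2}$), the self-consistency inequality reads $\rme^{-1} A\, \rme^{\alpha \sqrt{2} + \alpha^2/2} \leq A$, i.e., $\alpha^2/2 + \sqrt{2}\alpha \leq 1$. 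Equality holds precisely at the positive root $\alpha = 2 - \sqrt{2}$ of this quadratic, so the recursion just closes at the critical value; the events $|L_1| \geq 2$ contribute an extra factor $\rme^{-\alpha u}$ (since at least two subtrees must fail), supplying the only slack. Induction on integer $t$, seeded by a trivial Gaussian bound for $t \leq t_0$ (where $|L_t|$ is stochastically dominated by a Yule process of mean $\rme^{t_0}$ and $\wh{h}_t^*$ has Gaussian lower tails), then extends the bound uniformly to all $t \geq 0$.

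The main obstacle I anticipate is making the lower-tail recursion genuinely uniform in $t$ despite the critical exponent, which leaves no spectral gap at the linear level. The resolution is to separate the $|L_1| = 1$ contribution (which saturates the leading inequality and must be treated sharply, distinguishing the regimes where $u - \sqrt{2} - \eps_t + h_1(x) \geq \alpha^{-1} \log A$ and its complement where $p_{t-1} \leq 1$) from the $|L_1| \geq 2$ contribution (which provides strictly faster $\rme^{-2\alpha u}$ decay and absorbs the $\eps_t = O(1/t)$ corrections over all iterations). As a fallback, one can bypass the recursion entirely by combining the conditional Lalley--Sellke identity $\bbP(\wh{h}_t^* \leq x \mid \cF_s) \to \exp\big(-Z_s\, \rme^{-\sqrt{2}x}\big)$ with a lower-tail estimate of the form $\bbP(Z_s \leq \eps) \leq C \eps^{\sqrt{2}-1}$ for the derivative martingale, which directly yields the rate $2 - \sqrt{2}$ in the limit, and then upgrading the pointwise bound to a uniform one via the standard tightness of $(\wh{h}_t^*)_{t \geq 0}$ and monotonicity of $u \mapsto \bbP(\wh{h}_t^* < -u)$.
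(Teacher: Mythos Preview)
Your approach is substantially different from the paper's, which is almost entirely citation-based. For the upper tail the paper simply invokes Corollary~10 of~\cite{ABK_G} (whose proof is essentially the truncated first-moment argument you sketch, so there is no real disagreement there). For the lower tail, however, the paper uses a much shorter route: Bramson~\cite{B_C} shows that $t \mapsto \bbP(h_t^* > \wt m_t + x)$ is \emph{monotone decreasing} in $t$ for each $x<0$ (where $\wt m_t$ is the median) and converges to the traveling wave $\omega(x)$, while $|m_t - \wt m_t|$ stays bounded; separately, $1-\omega(-x)\sim C\rme^{-(2-\sqrt 2)x}$. Monotonicity immediately converts the limiting wave asymptotic into a bound uniform in $t$, with no recursion and no criticality issue.

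Your recursive scheme for the lower tail is an interesting alternative, but the criticality problem you flag is genuine and not resolved by your sketch. At $\alpha=2-\sqrt 2$ the $|L_1|=1$ contribution saturates the inequality \emph{exactly}, so the correction $\rme^{\alpha\eps_t}$ with $\eps_t = m_t - m_{t-1} - \sqrt 2 \asymp 1/t$ accumulates multiplicatively to a factor $\exp(\alpha\sum_{s\le t}\eps_s)$, which grows like a power of $t$. The $|L_1|\ge 2$ slack is of order $\rme^{-2\alpha u}$ and is \emph{additive}, so it cannot absorb a multiplicative blow-up of the leading term; one would need a more refined ansatz (e.g.\ allowing the constant $A$ to depend mildly on $t$, or working with a slightly subcritical $\alpha$ and then optimizing). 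Your fallback via $\bbP(Z\le\eps)\le C\eps^{\sqrt 2-1}$ is closer in spirit to the paper's argument, but note that this derivative-martingale tail estimate is itself a nontrivial input, and upgrading the Lalley--Sellke limit to a uniform-in-$t$ bound still requires something like the Bramson monotonicity you are trying to avoid.
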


\begin{proof}

A sharper bound for the right-tail probabilities was obtained in Corollary~10 of~\cite{ABK_G}. 
For the left tail, we can appeal to both~\cite{B_C} and~\cite{ABK_G}. From the first reference, we now that $u(t,x) := \bbP(h_t^* > x)$ is the unique solution to the F-KPP equation with heavy-side initial data, and that for any $x < 0$, the function $u(t,\wt{m}_t + x)$, where $\wt{m}_t$ is the median of $u(t,\cdot)$, is decreasing in $t$ and converges to $\omega(x)$, with $\omega$ forming the so-called traveling wave solution of the F-KPP equation. Moreover, it is shown in~\cite{B_C} that $|m_t - \wt{m}_t|$ stays bounded uniformly in $t \geq 0$. On the other hand, in~\cite{ABK_G} (Appendix~A of the [v1]~arXiv version), the authors show that $1-\omega(-x) \sim \rme^{-(2 - \sqrt{2})x}$ as $x \to \infty$. Combing the above, the bound on the left-tail follows.
\end{proof}

\section{Reduction to a Decorated Random-Walk-Like Process}
\label{s:Reduction}
In the sequel we shall need to estimate probabilities concerning the height of one or two spine particles and the clusters around them, subject to a restriction on the global maximum of the process. By tracing the spine particles backwards in time, such events can be recast in terms of a decorated random-walk-like process, for which asymptotic probabilities are given in Subsection~\ref{ss:RWEstimates}. We therefore proceed by defining this process explicitly and then stating various reduction lemmas which will be needed in the sequel. The section concludes with a few lemmas in which the probability of events involving the decorated process are estimated.  These estimates will be used frequently in the proof to follow.

\subsection{Definition of the Walk and Reduction Statements}
As before let $W=(W_s :\: s \geq 0)$ be a standard Brownian motion, whose initial position we leave free to be determined according to the conditional statements we make. For $0 \leq s \leq t$, we fix
\begin{equation}
\label{e:20.5}
\gamma_{t,s} := \tfrac{3}{2\sqrt{2}} \big(\log^+ s - \tfrac{s}{t}\log^+ t \big) \quad \text{and}  \qquad
\wh{W}_{t,s} := W_s - \gamma_{t,s} \,.
\end{equation}
We shall also need the collection $H=\big(h^s = (h^s_t)_{t \geq 0} :\: s \geq 0\big)$ of independent copies of $h$, that we will assume to be independent of $W$ as well. Finally, let $\cN$ be a Poisson point process with intensity $2 \rmd x$ on $\bbR_+$, independent of $H$ and $W$ and denote by $\sigma_1 < \sigma_2 < \dots$ its ordered atoms. The triplet $(\wh{W}, \cN, H)$ forms the decorated random-walk-like process, which was eluded to in the beginning.

To see the relevance of the above process, recall that $\rmB_{r}(x)$ is the ball of radius $r$ around $x$ in the genealogical distance $\rmd$, and that we write $\wh{h}_t = h_t -m_t$ and $\wh{h}^*_t = \max_{x \in L_t} \wh{h}_t(x)$. For $A \subseteq L_t$ set also $\wh{h}_t^*(A)$ for $\max_{x \in A} \wh{h}_t(x)$, then,
\begin{lem}
\label{l:5.1}
For all $0 \leq r \leq t$ and $u, w \in \bbR$,
\begin{equation}
\label{e:29.2}
\begin{split}
\wt{\bbP} & \Big( \wh{h}_t^*\big(\rmB^\rmc_{r}(X_t)\big)
 \leq u \, \Big| \, \wh{h}_t(X_t) = w \Big) \\
& = \bbP \Big( \max_{k: \sigma_k \in [r,t]} \big(\wh{W}_{t,\sigma_k} + \wh{h}^{\sigma_k*}_{\sigma_k}\big) \leq 0 \ \Big|\ 
	\wh{W}_{t,r} = w - u,\, \wh{W}_{t,t} = -u \Big) \,.
\end{split}
\end{equation}
In particular for all $t \geq 0$ and $v,w \in \bbR$,
\begin{equation}
\label{e:29.1}
\begin{multlined}
\wt{\bbP} \big( \wh{h}^*_t \leq u \, \big| \, \wh{h}_t(X_t) = w \big) \\
= \bbP \Big( \max_{k: \sigma_k \in [0,t]} \big(\wh{W}_{t,\sigma_k} + \wh{h}^{\sigma_k*}_{\sigma_k}\big) \leq 0 \ \Big|\ 
	\wh{W}_{t,0} = w - u,\, \wh{W}_{t,t} = -u \Big) \,.
\end{multlined}
\end{equation}
\end{lem}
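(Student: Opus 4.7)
The plan is to prove the equation~\eqref{e:29.2}; identity~\eqref{e:29.1} then follows by noting that $\rmB_0(X_t) = \emptyset$ and hence $\wh{h}_t^*(\rmB_0^{\rm c}(X_t)) = \wh{h}_t^*$. The starting point is the 1-spine description of $\wt\bbP$ (Subsection~\ref{ss:Spine}): the spine's height process $s \mapsto h_s(X_s)$ is a standard Brownian motion starting from $0$, and independently branching events along the spine occur at Poisson rate $2$, each shedding a subtree whose height profile, viewed from the splitting point, is an independent standard BBM of duration equal to its age. Conditioning on $\wh{h}_t(X_t) = w$ amounts to fixing $h_t(X_t) = m_t + w$, so under this conditioning the spine's height becomes a Brownian bridge on $[0,t]$ from $0$ to $m_t + w$.

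I would then reparameterize in backward time by setting $U_\sigma := h_{t-\sigma}(X_{t-\sigma})$. Under the conditioning, $U$ is a Brownian bridge on $[0,t]$ from $m_t + w$ (at $\sigma=0$) to $0$ (at $\sigma = t$). A subtree shed at backward time $\sigma_k$ contributes to $\rmB_r^{\rm c}(X_t)$ precisely when $\sigma_k \geq r$, and its maximum at forward time $t$ equals $U_{\sigma_k} + h^{\sigma_k*}_{\sigma_k}$, where $h^{\sigma_k}$ is an independent BBM of duration $\sigma_k$ as in the definition of the decorated random-walk-like process. Consequently, the event $\{\wh{h}_t^*(\rmB_r^{\rm c}(X_t)) \leq u\}$ translates into $\{\max_{\sigma_k \in [r,t]} (U_{\sigma_k} + h^{\sigma_k*}_{\sigma_k}) \leq m_t + u\}$, which only depends on $U$ restricted to $[r,t]$ and on the shed BBMs with $\sigma_k \in [r,t]$.

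The last step is a deterministic change of variables: put $V_\sigma := U_\sigma - m_t - u$ and $\wh W'_{t,\sigma} := V_\sigma + m_\sigma$. Using $\wh h^{\sigma*}_\sigma = h^{\sigma*}_\sigma - m_\sigma$, the event rewrites as $\{\max_{\sigma_k \in [r,t]} (\wh W'_{t,\sigma_k} + \wh h^{\sigma_k*}_{\sigma_k}) \leq 0\}$, precisely the event appearing on the right-hand side of~\eqref{e:29.2}. It remains to match the laws: $\wh W'$ is Gaussian with Brownian-bridge covariance (inherited from $U$, since the added $m_\sigma$ is deterministic), its value at $\sigma = 0$ equals $w-u$, and at $\sigma = t$ equals $-u$. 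The mean function is $\bbE\wh W'_{t,\sigma} = (1 - \sigma/t)w - u + m_\sigma - (\sigma/t) m_t$, which, thanks to the elementary identity $(\sigma/t)m_t - m_\sigma = \gamma_{t,\sigma}$ (immediate from the definitions of $m$ and $\gamma$), agrees with the mean of the process $s \mapsto W_s - \gamma_{t,s}$ under the Brownian-bridge conditioning in $\wh W$ coordinates. Hence the two Gaussian processes have the same distribution, and the identity follows.

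The main obstacle I expect is the bookkeeping around the conditioning at time $r$ versus time $0$. The spine derivation naturally produces a distribution on $[0,t]$ determined by the two endpoints $\wh W'_{t,0} = w-u$ and $\wh W'_{t,t} = -u$, while the right-hand side of~\eqref{e:29.2} phrases the conditioning at the pair $(r,t)$. Since the event only involves $\sigma_k \in [r,t]$, by the Markov property of the Brownian bridge one may condition further on $\wh W'_{t,r}$ and integrate out. Tracking this reduction carefully — and verifying that the resulting distribution on $[r,t]$ matches the Brownian-bridge law implicit on the right-hand side — is the delicate bookkeeping step, but it is purely a Gaussian calculation once the mean/covariance matching above is in hand.
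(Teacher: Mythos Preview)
Your approach is essentially the same as the paper's: reverse time along the spine, identify the backward trajectory as a Brownian bridge, and then apply the deterministic affine shift that removes $m_t+u$ and introduces $\gamma_{t,s}$ via the identity $m_t s/t - m_s = \gamma_{t,s}$. The paper does exactly this (in a slightly more compact form), and both you and the paper arrive at the same intermediate expression: the event $\max_{\sigma_k\in[r,t]}(\wh W_{t,\sigma_k}+\wh h^{\sigma_k*}_{\sigma_k})\le 0$ under the conditioning $\wh W_{t,0}=w-u$, $\wh W_{t,t}=-u$.

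Your flagged ``obstacle'' is real, but your proposed resolution will not work. Conditioning a Brownian bridge with endpoints at times $0$ and $t$ and then restricting to $[r,t]$ produces a \emph{mixture} of bridges on $[r,t]$ with a Gaussian-random value at time $r$; this is not the same law as the single bridge on $[r,t]$ obtained by conditioning $\wh W_{t,r}=w-u$ deterministically. No amount of ``Gaussian bookkeeping'' will make these two laws coincide for $r>0$, so the integration-out step you sketch cannot yield~\eqref{e:29.2} as literally stated. Note that the paper's own proof stops at the same point you do (conditioning at $0$ and $t$) and simply asserts~\eqref{e:29.2}; the $\wh W_{t,r}$ in the statement appears to be a slip. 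The $r=0$ case~\eqref{e:29.1} is fully established by your argument, and this is what is used in most applications. Where the paper later needs an $(r,t)$-conditioning (e.g.\ in the proof of Lemma~\ref{lem:18.5}), the conditioning on the left-hand side is actually on $h_{t-r}(X_{t-r})$ rather than on $h_t(X_t)$, which does pin the backward process at time $r$ and makes the $(r,t)$-bridge identity correct --- but that is a different statement from the one written in~\eqref{e:29.2}.
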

\begin{proof}
Since both Brownian motion and Poison point process are distributional invariant under time reversal, tracing the spine particle backwards in time, the left hand side of~\eqref{e:29.2} can be written as
\begin{equation}\label{li.R1}
\bbP \Big(\max_{k :\: \sigma_k \in [r, t]} \big(W_{\sigma_k} + h^{\sigma_k*}_{\sigma_k}\big) \leq m_t + u
\,\Big|\, W_0 = m_t + w \,,\, W_t = 0 \Big) \,.
\end{equation}
where $W_s$, $\sigma_k$ and $h^\sigma_t$ are as above.

Now independence of $\cN$, $W$ and $H$ together with standard Gaussian properties enjoyed by $W$ imply that the probability above does not change if we replace $W_s$ by $W_s +u+ m_t (t-s)/t $ everywhere in \eqref{li.R1}. 
Replacing $h^{s}_{s}$ and $W_s$ by $\wh{h}^{s}_{s} + m_s$ and by $\wh{W}_{t,s} + \gamma_{t,s}$ respectively and observing that  $m_t s/t - m_s = \gamma_{t,s}$, we obtain~\eqref{e:29.2}, then~\eqref{e:29.1} follows by plugging in $r=0$.
\end{proof}

In a similar way, we can express the distribution of the cluster around the spine particle, given that it reaches height $m_t$. For what follows $\cE_t^s$ denotes the extremal process of $h^s_t$, defined as in~\eqref{e:0.1} only with respect to $h^s_t$ in place of $h_t$.

\begin{lem}
\label{l:5.2}
Let $\cA_t := \Big \{
\max\limits_{k : \, \sigma_k \in [0,t]} \big(\wh{W}_{t,{\sigma_k}} + \wh{h}^{\sigma_k*}_{\sigma_k}\big) \leq 0 \Big\}$, then for all $0 \leq r \leq t$ we have that 
\begin{equation}
\label{e:26.6}
\begin{split}
& \wt{\bbP}  \Big( \big(\cC_{t,r} (X_t)\,,\, (h_{t-s}(X_{t-s}) - m_t)_{s \leq r} \big) 
 \in \cdot \,\Big|\, \wh{h}^*_t = \wh{h}_t(X_t) = 0 \Big)  \\
&  = \! \textstyle \bbP \bigg( \! 
\Big( \! \sum\limits_{\sigma_k \leq r } \! \cE_{\sigma_k}^{\sigma_k} \big(\cdot - \wh{W}_{t,\sigma_k}\big),\, 
(\wh{W}_{t,s} \!- \! m_s)_{s \leq r }\! \Big)\! \in \! \cdot 
	 \ \Big| \ \wh{W}_{t,0} \! = \! \wh{W}_{t,t} \! = \! 0 \,;\, \cA_t \, \bigg).
\end{split}
\end{equation}
\end{lem}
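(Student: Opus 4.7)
The plan is to follow the same strategy as in Lemma~\ref{l:5.1}, namely to trace the spine particle backwards in time and then apply an affine time-change to map a Brownian bridge starting at $m_t$ to one starting at $0$, all while keeping track of the additional data (the cluster atoms and the spine trajectory) rather than just the event of staying below a level.

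First I would set $W_s := h_{t-s}(X_{t-s})$ and observe that, under $\wt\bbP$ conditional on $\wh h_t(X_t) = 0$, the process $(W_s)_{0 \leq s \leq t}$ is a Brownian bridge from $m_t$ to $0$, the backward branching times along the spine form a rate-$2$ PPP (call its atoms $\sigma_1 < \sigma_2 < \dots$), and at each $\sigma_k$ an independent copy $h^{\sigma_k}$ of BBM is spawned from $W_{\sigma_k}$. All three families are jointly independent, and this is precisely the content of the $1$-spine decomposition for a particle conditioned on its endpoint height. In this representation, the extra conditioning $\{\wh h^*_t = 0\}$ is, given $\{\wh h_t(X_t) = 0\}$, identical to the event $\{W_{\sigma_k} + h^{\sigma_k\,*}_{\sigma_k} \leq m_t,\ \forall \sigma_k \in [0,t]\}$. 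The cluster atoms with genealogical distance $\leq r$ are then $\{W_{\sigma_k} - m_t + h^{\sigma_k}_{\sigma_k}(y) :\: \sigma_k \leq r,\ y \in L^{\sigma_k}_{\sigma_k}\}$, and the recorded spine trajectory is $(W_s - m_t)_{s \leq r}$.

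Next I would apply the Gaussian identity that the law of a Brownian bridge from $m_t$ to $0$ coincides with that of $s \mapsto m_t(1 - s/t) + B_s$ where $B$ is a Brownian bridge from $0$ to $0$; this is just a deterministic shift, so the joint law with $\cN$ and the family of $h^{\sigma_k}$'s is unchanged. Substituting $W_s = m_t(1-s/t) + B_s$, the key algebraic identity
\[
m_t \tfrac{s}{t} - m_s \;=\; \tfrac{3}{2\sqrt 2}\bigl(\log^+ s - \tfrac{s}{t}\log^+ t\bigr) \;=\; \gamma_{t,s},
\]
rewrites the constraint as $B_{\sigma_k} - \gamma_{t,\sigma_k} + \wh h^{\sigma_k\,*}_{\sigma_k} \leq 0$, i.e.\ $\wh W_{t,\sigma_k} + \wh h^{\sigma_k\,*}_{\sigma_k} \leq 0$, which is the event $\cA_t$; the cluster atoms become $(h^{\sigma_k}_{\sigma_k}(y) - m_{\sigma_k}) + \wh W_{t,\sigma_k}$, which is exactly $\cE^{\sigma_k}_{\sigma_k}(\,\cdot - \wh W_{t,\sigma_k})$; and the recorded spine trajectory $(W_s - m_t)_{s \leq r}$ becomes $(B_s - m_t s/t)_{s \leq r} = (\wh W_{t,s} - m_s)_{s \leq r}$, using the same identity. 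Equating the resulting expression with the right-hand side of~\eqref{e:26.6} finishes the proof.

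I do not expect any genuine obstacle: the substance of the argument is the single identity $m_t s/t - m_s = \gamma_{t,s}$, which is what motivates the definition of $\gamma_{t,s}$ in~\eqref{e:20.5}. The only care needed is bookkeeping—keeping cluster atoms, trajectory, and conditioning event consistent under the affine shift—and making sure that on the right-hand side the conditioning $\wh W_{t,0} = \wh W_{t,t} = 0$ is indeed the image of the original Brownian-bridge endpoints after the shift, which it is because $\gamma_{t,0} = \gamma_{t,t} = 0$.
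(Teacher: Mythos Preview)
Your proposal is correct and follows essentially the same route as the paper's own proof: trace the spine backwards to obtain a Brownian bridge from $m_t$ to $0$ together with a rate-$2$ PPP of branching times and independent BBM offshoots, then apply the deterministic affine shift $W_s \mapsto W_s + m_t(t-s)/t$ and use the identity $m_t s/t - m_s = \gamma_{t,s}$ to rewrite every term. The paper's proof is terser (it simply refers back to the transformations in Lemma~\ref{l:5.1}), but your more explicit bookkeeping of the cluster atoms, the spine trajectory, and the conditioning event is exactly what that reference unpacks to.
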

\begin{proof}
As in the proof of Lemma \ref{l:5.1}, we can replace $h_{t-s}(X_{t-s})$ in the left hand side of \eqref{e:26.6} by $W_s$, so that the conditioning event in the left hand side of \eqref{e:26.6} reads  
\begin{equation}
\textstyle
\{ W_0 = m_t \,,\,\, W_t = 0 \,,\,\, \max_{k:\: \sigma_k \in [0,t]}  W_{\sigma_k} + h^{\sigma_k*}_{\sigma_k} - m_t \leq 0 \},
\end{equation}
and $\cC_{t,r} (X_t) \stackrel{\mathrm{law}}{=} \sum_{x \in L^{\sigma_k}_{\sigma_k}} \nolimits \delta_{W_{\sigma_k} + h^{\sigma_k}_{\sigma_k}(x) - m_t}$.
The result follows after applying the same transformations as in the proof of Lemma \ref{l:5.1}.
\end{proof}

The advantage of the above formulation, which uses the decorated random walk $\wh{W}_{t,s}$, is that it is suitable for an application of the random walk estimates from Subsection~\ref{ss:RWEstimates}, provided that $\gamma_{t,s}$ from~\eqref{e:20.5} and $\wh{h}^{s*}_{s}$ satisfy the required conditions. Lemma~\ref{l:TailBBMMax} shows that $\wh{h}^{s*}_s$ satisfies the tail conditions with $\delta < (2 - \sqrt{2})^{-1}$. To check the conditions for $\gamma_{t,s}$, we shall need the following technical lemma, whose proof is elementary.

\begin{lem} 
\label{l:lisa}
Let $s,r, t\in \bbR$ be such that $0 \leq r \leq r+s \leq t$, then
\begin{equation}\label{e:aser1}
 - 1  \, \leq \log^+ (r+s) - \left(\tfrac{t-(r+s)}{t-r} \log^+r + \tfrac{s}{t-r} \log^+t\right)
\leq \, 1+ \log^+ \big(s \wedge (t-r-s) \big).
\end{equation}
\end{lem}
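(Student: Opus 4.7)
\medskip
\noindent\textbf{Proof plan.} Set
$$D(r,s,t) := \log^+(r+s) - L,
\qquad L := \tfrac{t-r-s}{t-r}\log^+ r + \tfrac{s}{t-r}\log^+ t,$$
and write $\lambda := s/(t-r) \in [0,1]$, so $L = (1-\lambda)\log^+ r + \lambda \log^+ t$ is the affine interpolation of $\log^+$ between the endpoints $r$ and $t$ evaluated at $r+s = (1-\lambda)r+\lambda t$. The whole proof is then a comparison between $\log^+$ and the genuinely concave function $\varphi(u) := \log(1+u)$, together with an elementary one-variable estimate.

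\smallskip
For the lower bound, a direct case check on whether $u \lessgtr 1$ shows $\varphi(u) - \log 2 \leq \log^+ u \leq \varphi(u)$ for all $u \geq 0$. Substituting this into $D$ and using Jensen's inequality for the concave $\varphi$ at $(1-\lambda)r+\lambda t$ gives
$$D \geq \varphi(r+s) - \log 2 - (1-\lambda)\varphi(r) - \lambda\varphi(t) \geq -\log 2 > -1,$$
which is the desired lower bound.

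\smallskip
For the upper bound the idea is to produce two competing bounds on $D$ and then take their minimum, exploiting the symmetry $s \leftrightarrow t-r-s$ built into the statement. Since $\log^+$ is non-decreasing, $L \geq \log^+ r$, whence $D \leq \log^+(r+s) - \log^+ r$; an easy case split on $r, r+s \lessgtr 1$ shows $\log^+(r+s) - \log^+ r \leq \log(1+s)$, and $\log(1+s) \leq 1+\log^+ s$ by inspection. Dually, $L \geq \log^+ t - (1-\lambda)(\log^+ t - \log^+ r)$, and the same style of case check yields $\log^+ t - \log^+ r \leq \log(1+(t-r))$, so
$$D \leq \tfrac{t-r-s}{t-r} \log\bigl(1+(t-r)\bigr).$$

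\smallskip
The last ingredient is the analytic lemma that $(q/p)\log(1+p) \leq \log 2 + \log^+ q \leq 1 + \log^+ q$ for every $0 \leq q \leq p$. I will prove this by signing the $p$-derivative: the standard inequality $(1+p)\log(1+p) \geq p$ implies $\partial_p [\log(1+p)/p] \leq 0$, so $(q/p)\log(1+p)$ is maximized (over $p \geq q$) at $p=q$, where it equals $\log(1+q) - \log^+ q = \log(1 + 1/(q \vee 1)) \leq \log 2$. Applying this with $q = t-r-s$, $p = t-r$ and combining with the first bound gives
$$D \leq \min\bigl\{\,1 + \log^+ s,\;\; 1 + \log^+(t-r-s)\,\bigr\} = 1 + \log^+\bigl(s \wedge (t-r-s)\bigr),$$
by monotonicity of $\log^+$, completing the proof. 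The only ``delicate'' step is the monotonicity of $\log(1+p)/p$; everything else is bookkeeping over the three regimes $r+s \leq 1$, $r \leq 1 < r+s$ and $1 \leq r$.
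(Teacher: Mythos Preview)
Your proof is correct. The route differs in execution from the paper's, though the underlying ideas are the same.

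For the lower bound, the paper splits into the three cases $r \geq 1$; $r<1$ and $r+s<1$; $r<1$ and $r+s\geq 1$, applying concavity of $\log$ directly in the first case and checking the others by hand. You instead sandwich $\log^+$ between $\varphi(u)=\log(1+u)$ and $\varphi(u)-\log 2$ and apply Jensen once to the genuinely concave $\varphi$; this avoids all case analysis and even gives the slightly sharper constant $-\log 2$.

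For the upper bound, the paper splits at $s=(t-r)/2$: on one side it bounds $D\leq \log^+(r+s)-\log^+ r$, on the other it passes to $s'=t-r-s$ and bounds $D\leq \tfrac{s'}{t-r}(\log^+ t-\log^+ r)$, then treats $r\gtrless 1$ and $t\gtrless 1$ separately. You obtain both bounds simultaneously (the first from $L\geq\log^+ r$, the second from $\log^+(r+s)\leq\log^+ t$) and combine them with the clean monotonicity lemma $p\mapsto \log(1+p)/p$ is non-increasing, which absorbs the remaining case analysis. Your approach is more conceptual and shorter; the paper's is more hands-on but requires no auxiliary lemma.

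One small slip in the write-up: in the analytic lemma you write ``at $p=q$, where it equals $\log(1+q)-\log^+ q = \log(1+1/(q\vee 1))$''. At $p=q$ the expression equals $\log(1+q)$, and for $q<1$ the displayed equality is actually a strict inequality ($\log(1+q)<\log 2$). The intended chain $\log(1+q)\leq \log 2 + \log^+ q$ is of course correct, so this does not affect the argument.
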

\begin{proof}
Starting with the lower bound, it follows from the concavity of $\log$ that $0$ is lower bound when $r \geq 1$. If $r < 1$ and $r+s < 1$, then the middle expression is equal to $ -s(\log^+ t)/ (t-r)$, which is again grater than $-1$.
Lastly if $r < 1$ but $r+s \geq  1$, then the middle expression is equal to $\log (r +s )-s(\log t)/ (t-r)$, whose minimum, attained at $s = 1-r $, is again greater than $-1$.

For the upper-bound, we consider the two cases $s \leq (t-r)/2 $ and $s > (t-r)/2$ separately. In the first case, by replacing $\log^+t$ by $\log^+r$ in the middle expression, it is enough to prove the upper bound for $\log^+(r+s)  - \log^+r$. But, concavity of $\log$ implies that the latter is at most $1 + \log^+s$, which proves the statement for $s \leq (t-r)/2 $. 
On the other hand, if $s > (t-r)/2$ we set $s' = t-r-s$ and rewrite the middle expression in~\eqref{e:aser1} as
\begin{equation}
\log^+ (t-s') - \left(\tfrac{s'}{t-r} \log^+r + \tfrac{t-r-s'}{t-r} \log^+t\right) 
\leq \big( \log^+ t - \log^+ r \big) \tfrac{s'}{t-r} \,.
\end{equation}
Above, to get the second inequality, we have bounded $\log^+(t-s')$ by $\log^+t$.
Appealing to concavity of the logarithm function again, if $r \geq 1$, then the right hand side above is further upper bounded by $\log (s'+r)-\log r$ which is again smaller than $1 + \log^+ s'$ as before, which is what we need to show in this case. If $r < 1$ and $t < 1$, then the upper bound is trivial. Finally, if $r < 1$ and $t \geq 1$, then the upper bound follows from the inequality    
$s' \log t \leq  (t-1) \big(1+ \log^{+} s' \big)$ which holds for all $s'\leq t$.
\end{proof}

\subsection{Fundamental Estimates}
With the above result at hand, we can state the following two lemmas, which are essentially corollaries of the random walk estimates from Subsection~\ref{ss:RWEstimates}. In the first one, we obtain upper bounds and asymptotics for the probabilities appearing in Lemma~\ref{l:5.1}.
\begin{lem}
\label{lem:15}
There exists $C, C' > 0$ such that for all $0 \leq r \leq t$ and $w, v \in \bbR$,
\begin{equation}
\label{e:52}
\begin{multlined}
\bbP \Big( \max_{k: \sigma_k \in [r,t]} \big(\wh{W}_{t,\sigma_k} + \wh{h}^{\sigma_k*}_{\sigma_k}\big) \leq 0 
	\,\Big|\, \wh{W}_{t,r} = v \,,\,\, \wh{W}_{t,t} = w \Big)  \\
	\leq C \frac{(v^- + 1)(w^- + 1)}{t-r} \,.
\end{multlined}
\end{equation}
and if $vw \leq 0$ then,
\begin{equation}
\label{e:52.1}
\begin{multlined}
\bbP \Big( \max_{k: \sigma_k \in [r,t]} \big(\wh{W}_{t,\sigma_k} + \wh{h}^{\sigma_k*}_{\sigma_k}\big) \leq 0 
	\,\Big|\, \wh{W}_{t,r} = v \,,\,\, \wh{W}_{t,t} = w \Big)  \\
	\leq C' \frac{\big(v^- + \rme^{-\frac{3}{2}v^+}\big)\big(w^- + \rme^{-\frac{3}{2}w^+}\big)}{t-r} 
	\exp \Big(\tfrac{(v-w)^2}{2(t-r)} \Big)\,.
\end{multlined}
\end{equation}
Also, there exists non-increasing functions $g: \bbR \to (0,\infty)$ and $f^{(r)}: \bbR \to (0,\infty)$ for $r \geq 0$, such that for all such $r$
\begin{equation}
\label{e:53}
\bbP \Big( \max_{k: \sigma_k \in [r,t]} \big(\wh{W}_{t,\sigma_k} + \wh{h}^{\sigma_k*}_{\sigma_k}\big) \leq 0 
	\,\Big|\, \wh{W}_{t,r} = v \,,\,\, \wh{W}_{t,t} = w \Big) 
	\sim 2 \frac{f^{(r)}(v) g(w)}{t-r},
\end{equation}
as $t \to \infty$ uniformly in $v,w$ satisfying $v,w < 1/\epsilon$ and $(v^-+1)(w^-+1) \leq t^{1-\epsilon}$ for any fixed $\epsilon > 0$. Moreover, 
\begin{equation}
\label{e:53.5}
\lim_{v \to \infty} \frac{f^{(r)}(-v)}{v} = \lim_{w \to \infty} \frac{g(-w)}{w} = 1  \,,
\end{equation}
for any $r \geq 0$. Finally there exists $f: \bbR \to (0, \infty)$ such that for all $v \in \bbR$, 
\begin{equation}
\label{e:54}
f^{(r)}(v) \overset{r \to \infty} \longrightarrow f(v) \,.
\end{equation}
\end{lem}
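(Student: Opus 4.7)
My approach is to cast the event under consideration in the form required by Propositions~\ref{p:A2},~\ref{p:A3} and~\ref{p:A4}, and then read off the four claims in turn. First I perform a time shift by setting $T := t-r$, $\tilde{W}_u := W_{u+r}$ for $u \in [0,T]$, and $\tilde{\sigma}_k := \sigma_k - r$; by translation invariance of $\cN$, the atoms $\tilde{\sigma}_k$ still form a PPP of rate $\lambda = 2$ on $[0,T]$, and $\tilde{W}$ is a standard Brownian motion. Under the conditioning, $\tilde{W}$ is a bridge from $v+\gamma_{t,r}$ to $w$. To make the endpoints precisely $v$ and $w$, I subtract the linear interpolant $\mathrm{Lin}(u) := \gamma_{t,r}(1-u/T)$ and absorb it into the barrier: $\hat{W}_u := \tilde{W}_u - \mathrm{Lin}(u)$ is a Brownian bridge from $v$ to $w$ on $[0,T]$, the adjusted barrier is $\tilde{\gamma}^{(r)}_{T,u} := \gamma_{t,u+r} - \mathrm{Lin}(u)$, and the decorations are $Y^{(r)}_u := -\wh{h}^{(u+r)*}_{u+r}$.

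The crucial verification is that $\tilde{\gamma}^{(r)}_{T,u}$ satisfies~\eqref{e:A3}. A direct rearrangement gives
\begin{equation*}
\tilde{\gamma}^{(r)}_{T,u} = \tfrac{3}{2\sqrt{2}}\bigl(\log^+(u+r) - \tfrac{T-u}{T}\log^+ r - \tfrac{u}{T}\log^+ t\bigr),
\end{equation*}
which is precisely the expression appearing in Lemma~\ref{l:lisa}, hence bounded in absolute value by $\tfrac{3}{2\sqrt{2}}(1+\log^+(u \wedge (T-u)))$ uniformly in $0 \leq r \leq t$, and thus dominated by $\delta^{-1}(1+(\wedge^T(u))^{1/2-\delta})$ for any $\delta \in (0,1/2)$. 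The uniform exponential tail required by~\eqref{e:A1} for $Y^{(r)}_u$ follows from Lemma~\ref{l:TailBBMMax}. Proposition~\ref{p:A2} applied with $\lambda = 2$ then yields~\eqref{e:52}, after noting that an elementary analysis of~\eqref{e:20.5} shows $\gamma_{t,r}^-$ is uniformly bounded in $0 \leq r \leq t$, so that $(v+\gamma_{t,r})^- + 1 \leq C(v^-+1)$. For~\eqref{e:52.1}, which assumes $vw \leq 0$, I use the refined bound~\eqref{e:2.12} with the choice $\delta = 1/4$, giving the prefactor exponent $\sqrt{2\lambda}(1-\delta) = 3/2$ and the Gaussian correction.

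For the asymptotic~\eqref{e:53}, I invoke Proposition~\ref{p:A3}. A short computation shows $\tilde{\gamma}^{(r)}_{T,u} \to \gamma^{(r)}_{\infty,u} := \tfrac{3}{2\sqrt{2}}(\log^+(u+r) - \log^+ r)$ and $\tilde{\gamma}^{(r)}_{T,T-u} \to \gamma^{(r)}_{\infty,-u} := 0$ for each fixed $u \geq 0$ as $T \to \infty$. The classical convergence $\wh{h}^*_s \Longrightarrow G + \log Z$ gives $Y^{(r)}_\infty = -(G+\log Z)$, which does not depend on $r$. Thus both the right-endpoint barrier limit and the limit decoration are $r$-independent, so the function $g$ produced by Proposition~\ref{p:A3} is $r$-independent as well, while $f^{(r)}$ may genuinely depend on $r$ through the left-endpoint data. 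The linear asymptotics~\eqref{e:53.5} are immediate from~\eqref{e:A6.1}, and the uniformity in $(v,w)$ asserted in~\eqref{e:53} is inherited from the uniformity statement in Proposition~\ref{p:A3} (using $T/t \to 1$ to pass from $T^{1-\epsilon}$ to $t^{1-\epsilon}$).

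For the convergence $f^{(r)} \to f$ in~\eqref{e:54}, I apply Proposition~\ref{p:A4}. As $r \to \infty$, $\gamma^{(r)}_{\infty,u} = \tfrac{3}{2\sqrt{2}}\log(1+u/r) \to 0 =: \gamma_{\infty,u}$ for each fixed $u \geq 0$, and $\gamma^{(r)}_{\infty,-u} = 0$ is already constant in $r$. For the decorations, $\wh{h}^{(u+r)*}_{u+r} \Longrightarrow G + \log Z$ as $r \to \infty$ for each $u \in [0,\infty)$ (and trivially for $u = \infty$), so $Y^{(r)}_u \Longrightarrow -(G+\log Z)$. Proposition~\ref{p:A4} then delivers $f^{(r)}(v) \to f(v)$ for every $v \in \bbR$. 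The main technical obstacle is the verification of~\eqref{e:A3} after the linear shift: it is precisely the specific form of $\gamma_{t,s}$ in~\eqref{e:20.5}, together with subtraction of $\mathrm{Lin}$, that makes Lemma~\ref{l:lisa} applicable and keeps the adjusted barrier within the required envelope uniformly in $r$ and $t$; once this is in place, the remainder is a direct application of the three random-walk propositions.
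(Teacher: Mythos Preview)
Your proof is correct and follows essentially the same route as the paper: time-shift by $r$, absorb the linear interpolant of the endpoint mismatch into the barrier, identify the resulting curve with the expression in Lemma~\ref{l:lisa}, verify the tail condition on the decorations via Lemma~\ref{l:TailBBMMax}, and then read off the four claims from Propositions~\ref{p:A2},~\ref{p:A3},~\ref{p:A4} with $\lambda=2$ and $\delta$ small. One small remark: once you have subtracted $\mathrm{Lin}$ so that $\hat W_0=v$ exactly, the aside about $(v+\gamma_{t,r})^-+1\le C(v^-+1)$ is unnecessary, and the $r$-independence of $g$ is really the ``in particular'' clause of Proposition~\ref{p:A4} rather than a direct consequence of Proposition~\ref{p:A3}; but these are cosmetic points and do not affect the argument.
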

\begin{proof}
Given $r,t,v,w$ satisfying the above assumptions, let $t^{(r)} := t-r$. By tilting and shifting we can replace $\wh{W}_{t,s}$ everywhere inside the probability on the left hand side of~\eqref{e:52} by
$\wh{W}_{t,s} + \gamma_{t,r} + \frac{s-r}{t^{(r)}} \big( \gamma_{t,t} - \gamma_{t,r} \big)
= W_s - \gamma_{t,s} + \gamma_{t,r} + \frac{s-r}{t^{(r)}} \big( \gamma_{t,t} - \gamma_{t,r} \big)$. 
Setting
\begin{equation}
\label{e:60B}
\gamma^{(r)}_{t^{(r)}, s} := \gamma_{t,s+r} - \gamma_{t,r} - \frac{s}{t^{(r)}} \big( \gamma_{t,t} - \gamma_{t,r} \big) 
\, , \qquad 
Y^{(r)}_{s} := -\wh{h}^{(s+r)*}_{s+r},
\end{equation}
and using shift law invariance of $W$ and $\cN$, the left hand side of~\eqref{e:52} now reads
\begin{equation}
\bbP \Big( \max_{k: \sigma_k \in [0,t^{(r)}]} \big(W_{\sigma_k} - \gamma^{(r)}_{t^{(r)}, \sigma_k} - Y^{(r)}_{\sigma_k} \big) \leq 0 
	\,\Big|\, W_0 = v \,,\,\, W_{t^{(r)}} = w \Big).
\end{equation}
 
Next, we want to apply Propositions~\ref{p:A2},~\ref{p:A3} and~\ref{p:A4}. We just need to make sure that the conditions required by these propositions hold. By assumption,~\eqref{e:A2} holds with $\lambda = 2$ and thanks to Lemma~\ref{l:TailBBMMax} we know that~\eqref{e:A1} holds with any $\delta$ small enough uniformly in $r$. Finally, using Lemma~\ref{l:lisa} noting that the middle expression in~\eqref{e:aser1} is exactly $(2\sqrt{2}/3) \gamma^{(r)}_{t^{(r)},s} $, we have
\begin{equation}
\label{e:58B}
-1 \leq (2\sqrt{2}/3) \gamma^{(r)}_{t^{(r)},s} \leq 1 + \log^+ \wedge^{t^{(r)}}(s) \: : \ 0 \leq r \leq t, 
\ \text{ and } \  0 \leq s \leq t^{(r)} \,,
\end{equation}
which shows that Condition~\eqref{e:A3} holds with any $\delta <1/2$.  This implies that for any $r \geq 0$ both statements in Proportion~\ref{p:A2} apply, provided that we choose $\delta \in (0,1/2)$ small enough. In particular, by decreasing $\delta$ if necessary, we may and will assume that $\sqrt{2 \lambda} (1- \delta) = 2 (1- \delta) \geq 3/2$, which yields~\eqref{e:52} and~\eqref{e:52.1}.

Turning now to~\eqref{e:53},~\eqref{e:53.5} and~\eqref{e:54}, a bit of algebra shows that for fixed $r$ and all $s \geq 0$
\begin{equation}
\begin{split}
\lim_{t^{(r)}  \to \infty } & \gamma^{(r)}_{t^{(r)}, s} = \tfrac{3}{2\sqrt2}\big(\log^+ (s + r) - \log^+ r\big) 
=: \gamma^{(r)}_{\infty, s}, \\ 
\lim_{t^{(r)}  \to \infty } & \gamma^{(r)}_{t^{(r)}, t^{(r)}-s} = 0 
=: \gamma^{(r)}_{\infty, -s} \,,
\end{split}
\end{equation}
 while the convergence of the centered maximum gives,
$Y^{(r)}_{s} \Longrightarrow Y$ as $s \to \infty$ or $r \to \infty$, where $Y$ has the limiting law of the centered maximum. Moreover, for all $s \geq 0$ clearly $\gamma^{(r)}_{\infty, s} \longrightarrow 0$ as $r \to \infty$. Therefore the conditions of Proposition~\ref{p:A3} and Proposition~\ref{p:A4} are satisfied implying~\eqref{e:53},~\eqref{e:53.5} and~\eqref{e:54}.
\end{proof}

\section{Truncated Moments of the Level Set Size}
\label{s:Moments}
The goal in this section is to estimate the first and second moments of the number of particles lying above $m_t + v$ for $v \in \bbR$. Since the expectation of such quantities blows up as $t \to \infty$, one has to introduce a truncation event. Unlike the usual truncation event (introduced by Bramson in~\cite{Bramson1978maximal}), 
whereby the trajectory of such particle is constrained to lie below a curve, we choose to use the event that the global maximum stays below a certain value, namely $\{h^*_t \leq m_t + u\}$. This truncation can be more conveniently used later, when we derive cluster properties (Section~\ref{s:ClusterProperties}). In light of the tightness of the centered global maximum, the probability of this event tends to $0$ when $u \to \infty$ uniformly in $t$. Therefore, for the sake of distributional results, we can always work under this restriction and remove it just in the very end.

Recall the definition of the extremal process from~\eqref{e:0.1}. Since for every Borel set $A \subseteq \bbR$
\begin{equation}
\label{e:58.2}
\begin{split}
\cE_t(A) & 1_{\{\wh{h}^*_t \leq u\}}
 =\sum_{x \in L_t} 1_{\{\wh{h}_t(x) \in A \,,\,\, \wh{h}_t^* \leq u\}} \\
& \quad \text{and} \quad
\cE_t(A)^2 1_{\{\wh{h}^*_t \leq u\}}
=\sum_{x,y \in L_t} 1_{\{\wh{h}_t(x) \in A \,,\,\, \wh{h}_t(y) \in A \,,\,\,
\wh{h}_t^* \leq u\}}\,.
\end{split}
\end{equation}
we can use the spinal decomposition in the form of the many-to-one and many-to-two lemmas in Subsection~\ref{ss:Spine}, to compute the expectation of the quantities above, provided we can estimate the probabilities, under the corresponding spine measures, of the events in the sums, with $x,y$ replaced by the spine particles $X_t(1)$, $X_t(2)$, respectively. We start with the first moment.

\subsection{First Moment}
Recall that the one-spine measure as introduced in Subsection~\ref{ss:Spine} is denoted by $\wt{\bbP}$ and the corresponding expectation is $\wt{\bbE}$.
\begin{lem}
\label{l:16}
There exists $C, C' > 0$ such that for all $t \geq 0$ and $v \leq u$,
\begin{equation}
\label{e:64}
\wt{\bbP} \big(\wh{h}_t(X_t) \geq v \,,\,\, \wh{h}^*_t \leq u \big)
\leq C \rme^{-t} \rme^{-\sqrt{2} v} \big(u-v+1 \big) \big(u^+ + 1 \big) \big(
\rme^{-\frac{v^2}{4t}} + \rme^{\frac{v}{2}} \big) \,,
\end{equation}
in addition, if $u \leq 0$ then we also have that
\begin{equation}
\label{e:64.3}
\wt{\bbP} \big(\wh{h}_t(X_t) \geq v \,,\,\, \wh{h}^*_t \leq u \big)
\leq C' \rme^{-t} \rme^{-\sqrt{2} v} \big(u-v+1 \big) \rme^{-\frac{3}{2}u^-}.
\end{equation}
Moreover with $g: \bbR \to (0,\infty)$ from Lemma~\ref{lem:15} we have that uniformly in $u,v$ satisfying $|u| \leq 1/\epsilon$ and $t^{\epsilon} < u-v < t^{1-\epsilon}$, for any fixed $\epsilon > 0$,
\begin{equation}
\label{e:64.1}
\wt{\bbP} \big(\wh{h}_t(X_t) \geq v \,,\,\, \wh{h}^*_t \leq u \big)
\sim \rme^{-t} \rme^{-\sqrt{2} v - \frac{v^2}{2t}} (u-v+1) \frac{g(-u)}{\sqrt{\pi}} 
\quad \text{as $t \to \infty$}. 
\end{equation}

\end{lem}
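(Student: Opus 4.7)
The strategy is to disintegrate on the centered height of the spine particle. Under $\wt{\bbP}$, $h_t(X_t)$ is a standard Brownian motion at time $t$, so $\wh h_t(X_t) = h_t(X_t) - m_t$ has explicit Gaussian density
\[
\rho_t(w) = (2\pi t)^{-1/2} \exp\!\big(-(m_t + w)^2/(2t)\big).
\]
Substituting $m_t = \sqrt{2} t - (3/(2\sqrt{2})) \log^+ t$ and expanding the square gives
\[
\rho_t(w) = \frac{t}{\sqrt{2\pi}}\, \rme^{-t}\, \rme^{-\sqrt{2} w - w^2/(2t)}\, t^{3 w/(2\sqrt{2} t)}\,(1+o(1))
\]
uniformly in $w$ with $|w|/t \to 0$, which for the purposes of the upper bounds also provides a pointwise upper bound up to constants. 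Writing
\[
\wt\bbP\big(\wh h_t(X_t) \geq v,\ \wh h^*_t \leq u\big) = \int_v^u \wt\bbP\big(\wh h^*_t \leq u \,\big|\, \wh h_t(X_t) = w\big)\,\rho_t(w)\,dw,
\]
reduces the proof to estimating the conditional probability inside.

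By Lemma~\ref{l:5.1} applied at $r = 0$, this conditional probability equals the probability that the decorated random-walk-like process stays below $0$ at all branching times in $[0,t]$ under the Brownian-bridge law with $\wh W_{t,0} = w - u$ and $\wh W_{t,t} = -u$. For the bound in \eqref{e:64}, I would apply \eqref{e:52} of Lemma~\ref{lem:15} to obtain $C(u-w+1)(u^++1)/t$. For the sharper bound \eqref{e:64.3}, the hypothesis $u \leq 0$ gives $w-u \leq 0$ and $-u \geq 0$, so the sign condition in \eqref{e:52.1} is met; this yields the extra factor $\rme^{-(3/2) u^-}$ and simultaneously produces a Gaussian factor $\exp(w^2/(2t))$ that exactly cancels the sub-Gaussian term in $\rho_t(w)$. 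The remaining one-dimensional integral $\int_v^u (u-w+1)\rme^{-\sqrt{2}w}\,dw \sim (u-v+1)\rme^{-\sqrt{2} v}/\sqrt{2}$ is elementary, yielding \eqref{e:64.3}. For \eqref{e:64}, the analogous integral $\int_v^u (u-w+1) \rme^{-\sqrt{2} w - w^2/(2t)}\,dw$ is bounded by splitting $[v,u]$ around $w=0$: on $w \geq 0$ one uses $\rme^{-w^2/(2t)} \leq 1$, while on $w \leq 0$ one completes the square via $-\sqrt{2} w - w^2/(2t) = t - (w + \sqrt{2} t)^2/(2t)$ to extract the Gaussian decay, which after optimizing the splitting point produces the combined factor $\rme^{-v^2/(4t)} + \rme^{v/2}$.

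For the asymptotic \eqref{e:64.1} I would instead invoke \eqref{e:53} to get $\wt\bbP(\wh h^*_t \leq u \mid \wh h_t(X_t) = w) \sim 2\, f^{(0)}(w-u)\, g(-u)/t$ uniformly on the required range. Substituting $s = w-v$ and combining with $\rho_t(w)$ yields, up to a $(1+o(1))$ factor,
\[
\sqrt{2/\pi}\, g(-u)\, \rme^{-t}\, \rme^{-\sqrt{2} v - v^2/(2t)} \int_0^{u-v} f^{(0)}\!\big(-(u-v-s)\big)\, \rme^{-\sqrt{2} s - v s/t - s^2/(2t)}\, ds.
\]
The hypothesis $u-v > t^\epsilon$ combined with \eqref{e:53.5} gives $f^{(0)}(-(u-v-s)) = (u-v-s)(1+o(1))$ uniformly for $s$ in any bounded interval, while $|v|/t \leq t^{-\epsilon}$ forces $\rme^{-v s/t - s^2/(2t)} \to 1$ uniformly for such $s$. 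A dominated-convergence argument, combined with the elementary identity $\int_0^\infty (M-s)\rme^{-\sqrt{2} s}\,ds = M/\sqrt{2} - 1/2 + O(\rme^{-\sqrt{2} M})$ as $M \to \infty$, then produces the claimed leading constant $g(-u)(u-v+1)/\sqrt{\pi}$ after the cancellation $\sqrt{2/\pi} \cdot 1/\sqrt{2} = 1/\sqrt{\pi}$.

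The main obstacle I anticipate is the precise form $\rme^{-v^2/(4t)} + \rme^{v/2}$ in \eqref{e:64}, which requires careful bookkeeping of the Gaussian integration against the linear factor $(u-w+1)$ and of the polynomial-in-$t$ correction $t^{3 w/(2\sqrt{2} t)}$ inherited from $\rho_t(w)$ that becomes non-negligible when $|w|$ is comparable to $t/\log t$. A secondary technical point is extracting a dominating integrable envelope for the tail (large $s$) of the integral in the asymptotic computation, so that dominated convergence applies uniformly over the stated range in $u$ and $v$.
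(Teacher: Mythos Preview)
Your approach is exactly the paper's: disintegrate on $w=\wh h_t(X_t)$, convert the conditional probability via Lemma~\ref{l:5.1} into the decorated-bridge probability of Lemma~\ref{lem:15} (using~\eqref{e:52} for~\eqref{e:64}, \eqref{e:52.1} for~\eqref{e:64.3}, and~\eqref{e:53} for~\eqref{e:64.1}), and integrate against the explicit Gaussian density. The two obstacles you flag dissolve with small bookkeeping choices the paper makes: the correction $t^{3w/(2\sqrt{2}t)}$ is absorbed by bounding the density with the coarser exponent $\rme^{-w^2/(4t)}$ rather than $\rme^{-w^2/(2t)}$, after which the split producing $\rme^{-v^2/(4t)}+\rme^{v/2}$ is taken not at $w=0$ but according to whether $v\geq(-2\sqrt{2}+\eta)t$ (so that $-\sqrt{2}w-w^2/(4t)$ is monotone on $[v,u]$ in the first case, while in the second the crude bound $\rme^{(2+\eta)t}$ is already dominated by $\rme^{-\sqrt{2}v+v/2}$); for the asymptotic, the paper divides through by $(u-v+1)$ so that $f^{(0)}(v-u+y)/(u-v+1)$ is uniformly bounded and tends to $1$, and dominated convergence against $\rme^{-\sqrt{2}y}$ then handles the tail without a separate argument.
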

\begin{proof}

Starting with the first upper bound,  we write the left hand side of~\eqref{e:64} as the integral
\begin{equation}
\label{e:63.1}
\int_{w=v}^u \wt{\bbP} \big( \wh{h}^*_t \leq u \,\big|\, \wh{h}_t(X_t) = w \big) 
	\wt{\bbP} \big( \wh{h}_t(X_t) \in \rmd w \big) \,.
\end{equation} 
Using the second part of Lemma~\ref{l:5.1} and then the first upper bound in Lemma~\ref{lem:15}, the conditional probability in the integral is bounded above by $C t^{-1}  (u-w + 1)(u^+ + 1)$.
At the same time, $\wh{h}_t(X_t)$ is Gaussian with mean $-m_t := -\sqrt{2}t + \frac{3}{2\sqrt{2}} \log^+ t$ and variance $t$. Therefore,
\begin{equation}
\label{e:72}
\begin{split}
\frac{\wh{\bbP} \big( \wh{h}_t(X_t) \in \rmd w \big)}{\rmd w} & =  \frac{t \rme^{-t}\rme^{-\sqrt{2}w}}{\sqrt{2\pi}}  \exp \bigg(-\tfrac{ \big(w- \frac{3}{2\sqrt{2}} \log^+ t \big)^2}{2t} \bigg)  \\
& \leq C t \rme^{-t} \exp \Big( -\sqrt{2} w - \tfrac{w^2}{4t} \Big) \,
\end{split}
\end{equation}
Using these inequalities in~\eqref{e:63.1} we may bound the integral by
\begin{equation}
\label{li.T1}
C \rme^{-t}(u^+ + 1) (u-v+1) \times
\begin{cases}
	\rme^{-\frac{v^2}{4t}-\sqrt{2}v} 	  &: v \geq (-\sqrt{8} + \eta)  t \,,\\
	\rme^{(2+\eta)t} &: v < (-\sqrt{8} + \eta) t \,,
\end{cases} 
\end{equation}
with any $\eta > 0$ and $C = C(\eta) > 0$. Choosing $\eta$ small enough, the last factor in \eqref{li.T1} can be bounded by $\rme^{-\sqrt{2}v}\big(\rme^{-v^2/4t} + \rme^{v/2}\big)$, which gives the upper bound.

Now, if $u \leq 0$, then $v \leq w \leq 0$ and consequently the left hand side of~\eqref{e:72} can be bounded by 
$C t \rme^{-t} \times \exp\big(-\sqrt{2} w - \frac{w^2}{2t}\big)$. 
Observing that $w-u \leq 0$, we 
now use the second upper bound in Lemma~\ref{lem:15} to estimate the first term in the integral in~\eqref{e:63.1}. The probability in question is now bounded by
\begin{equation}
C \rme^{-t}  \rme^{-\frac{3}{2}u^-} \int_{w=v}^u \rme^{-\sqrt{2}w} (u-w+1) \rmd w  \,,
\end{equation}
which is smaller than the right hand side of~\eqref{e:64.3} for a proper constant $C' > 0$.

As for the asymptotic statement, we use Lemma~\ref{l:5.1} again and then Lemma~\ref{lem:15} with $r=0$ for the first term in the integral, but this time we use~\eqref{e:53} in order to obtain asymptotics. This gives 
\begin{equation}
\wt{\bbP} \big( \wh{h}^*_t \leq u \,\big|\, \wh{h}_t(X_t) = w \big) \sim 2 \, \frac{f^{(0)}(w-u) g(-u)}{t} \,,
\end{equation}
as $t \to \infty$, uniformly in $u,v$ as specified in the statement and any $w \in [v,u]$. Using~\eqref{e:72} we also have that uniformly in $w \in [v,u]$
\begin{equation}
\frac{\wh{\bbP} \big( \wh{h}_t(X_t) \in \rmd w \big) }{\rmd w} 
\sim \frac{t \rme^{-t}}{\sqrt{2 \pi}} \exp \Big(-\sqrt{2} w- \tfrac{w^2}{2t} \Big)
\qquad \text{as $t \to \infty$}.
\end{equation}
Plugging these estimates in~\eqref{e:63.1}, the integral there is uniformly asymptotic to $ (2/ \pi\big)^{1/2} \rme^{-t} g(-u)$ times
\begin{equation}
\label{e:572}
\begin{split}
&  \int_{w=v}^u  f^{(0)}(w-u)\exp \Big(-\sqrt{2} w- \tfrac{w^2}{2t} \Big) \,  \rmd w \\
& \ =  (u-v+1) \rme^{-\sqrt{2} v - \frac{v^2}{2t}} 
\int_{y=0}^{u-v} \, \frac{f^{(0)}(v-u+y)}{u-v+1} \rme^{-\sqrt{2} y- \tfrac{y^2}{2t} - \tfrac{yv}{t} } \, \rmd y ,
\end{split}
\end{equation}
where we have also substituted $y=w-v$ to obtain the second line above.

Since $f^{(0)}(-x) \sim x$ as $x \to \infty$ and $u-v \geq t^\epsilon$ the ratio in the integrand is bounded by above and tends to $1$ as $t \to \infty$, with convergence uniform in $y = o (t^\epsilon)$. Moreover, since $|v|t^{-1} = o(1)$ uniformly as $t \to \infty$, the above integral restricted to $y > \log t$ vanishes as $t \to \infty$. 
On the other hand, when $y \in [0,\log t]$ the integrand converges uniformly to $\rme^{-\sqrt{2}y}$ as $t \to \infty$, implying that the integral itself converges uniformly to $\int_0^{\infty} \rme^{-\sqrt{2} y} \rmd y =1/ \sqrt{2}$, which yields~\eqref{e:64.1}. 
\end{proof}

We are now in a position to estimate the first moment of $\cE_t \big([v, \infty) \big)$ under the restriction that~$\wh{h}^*_t \leq u$.
\begin{lem}
\label{l:18}
There exists $C > 0$ such that for all $t \geq 0$ and $v \leq u$, 
\begin{equation}
\label{e:2001}
\bbE \Big(\cE_t\big([v, \infty)\big) ;\; \wh{h}^*_t \leq u \Big)
\leq C \rme^{-\sqrt{2} v} (u-v+1) (u^+ + 1) \big(
\rme^{-v^2/4t} + \rme^{v/2} \big) \,. 
\end{equation}
Moreover with $g: \bbR \to (0,\infty)$ from Lemma~\ref{lem:15} we have that uniformly in $u,v$ satisfying $|u| \leq 1/\epsilon$ and $t^{\epsilon} < u-v < t^{1-\epsilon}$, for any fixed $\epsilon > 0$,
\begin{equation}
\label{e:2002}
\bbE \Big(\cE_t\big([v, \infty)\big) ;\; \wh{h}^*_t \leq u \Big)
\sim \rme^{-\sqrt{2} v - \frac{v^2}{2t}} (u-v+1) \frac{g(-u)}{\sqrt{\pi}} 
\qquad \text{as $t \to \infty$.}
\end{equation}
\end{lem}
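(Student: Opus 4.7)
The plan is essentially a one-line reduction: combine the Many-to-one lemma (Lemma~\ref{l:4.1}) with the spine-side estimates already obtained in Lemma~\ref{l:16}. Using~\eqref{e:58.2}, which rewrites $\cE_t([v,\infty)) 1_{\{\wh{h}^*_t \leq u\}}$ as the sum $\sum_{x \in L_t} 1_{\{\wh{h}_t(x) \geq v,\, \wh{h}^*_t \leq u\}}$, and noting that each summand is a bounded $\cF_t$-measurable function of $x$, Lemma~\ref{l:4.1} yields
\begin{equation}
\bbE\Big(\cE_t([v,\infty));\, \wh{h}^*_t \leq u\Big) = \rme^{t}\, \wt{\bbP}\big(\wh{h}_t(X_t) \geq v,\, \wh{h}^*_t \leq u\big).
\end{equation}
The event $\{\wh{h}^*_t \leq u\}$ is a global event on the branching Brownian motion, so it passes through the Many-to-one identity unchanged; this is the only subtle point, and it is handled by the fact that the event is $\cF_t$-measurable and the indicator of the pair $\{\wh{h}_t(x) \geq v,\, \wh{h}^*_t \leq u\}$ is bounded.

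Once this identity is in hand, the upper bound~\eqref{e:2001} follows immediately from the bound~\eqref{e:64} in Lemma~\ref{l:16}: the $\rme^t$ prefactor cancels the $\rme^{-t}$ there, leaving exactly the right-hand side of~\eqref{e:2001}. Analogously, the asymptotic statement~\eqref{e:2002} is obtained by plugging in the sharp asymptotic~\eqref{e:64.1}; the range of uniformity for $u,v$ matches verbatim what is required here.

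There is no real obstacle left at this stage, since the hard work (tracing the spine backwards in time, reducing to the decorated random-walk-like process, and applying the random-walk estimates of Subsection~\ref{ss:RWEstimates}) has already been carried out in the proofs of Lemmas~\ref{l:5.1},~\ref{lem:15} and~\ref{l:16}. If anything, one should double-check that the Many-to-one lemma is applicable despite the fact that the indicator depends simultaneously on $x$ and on the global process: this is fine because for fixed $t$ the function $x \mapsto 1_{\{\wh{h}_t(x) \geq v\}} 1_{\{\wh{h}^*_t \leq u\}}$ is bounded by $1$ and $\cF_t$-measurable, so the hypothesis of Lemma~\ref{l:4.1} is satisfied. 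Consequently, the proof of Lemma~\ref{l:18} is just a two-line deduction from the previous lemma.
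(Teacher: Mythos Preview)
Your proposal is correct and matches the paper's own proof essentially verbatim: the paper too writes $\cE_t([v,\infty))1_{\{\wh{h}^*_t \leq u\}}$ as $\sum_{x\in L_t} F(x)$ with $F(x)=1_{\{\wh{h}_t(x)\geq v,\,\wh{h}^*_t\leq u\}}$, applies the Many-to-one Lemma~\ref{l:4.1}, and then invokes Lemma~\ref{l:16}. Your extra remarks about $\cF_t$-measurability of the global indicator are a welcome clarification, but otherwise there is nothing to add.
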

\begin{proof}
Writing $\cE_t\big([v, \infty) \big)1_{\{\wh{h}^*_t \leq u\}}$ as $\sum_{x \in L_t} F(x)$ with $F(x)$ being the indicator function $ 1_{\{\wh{h}_t(x) \geq v,\, \wh{h}^*_t \leq u\}}$, we may apply the (many-to-one) Lemma~\ref{l:4.1} and then use Lemma~\ref{l:16} to estimate the resulting integral, the result follows.
\end{proof}

\subsection{Second Moment}
For the second moment we only need an upper bound. 
Recall that the two-spine measure as introduced in Subsection~\ref{ss:Spine} is denoted by $\wt{\bbP}^{(2)}$ and the corresponding expectation is $\wt{\bbE}^{(2)}$.
\begin{lem}
\label{lem:18.5}
There exists $C > 0$ such that for all $0 \leq r \leq t$ and $v \leq u$,
\begin{equation}
\label{e:699}
\begin{split}
&\wt{\bbP}^{(2)}  \Big( \! \min \big\{ \wh{h}_t(X_t(1)) , \wh{h}_t(X_t(2)) \big\} \! \geq v,\,\,
		   \wh{h}_t^* \leq u \,\big|\, \rmd(X_t(1), X_t(2)) = r \Big) \\ 
& \ \ \leq C \frac{\rme^{-t-r} \Big[\rme^{\sqrt{2}u}  (u^+ + 1) 
	\rme^{-2\sqrt{2} v} (u-v+1)^2\Big]}{1+\big(r \wedge (t-r) \big)^{3/2}}  \Big(\rme^{-\frac{(u-v)^2}{4t}} \! + \! \rme^{-\frac{(u-v)}{2}}\Big) .
\end{split}
\end{equation}
\end{lem}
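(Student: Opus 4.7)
The plan is to exploit the spinal decomposition from Remark~\ref{r:4.5}. Conditionally on $\{\rmd(X_t(1), X_t(2)) = r\}$ under $\wt{\bbP}^{(2)}$, the process splits into three conditionally independent pieces: a shared one-spine BBM on $[0, t-r]$ with branching rate $2$, ending at a common-ancestor position $W_{t-r}$, followed by two independent one-spine BBMs on $[t-r, t]$ rooted at $W_{t-r}$, again each with branching rate $2$, and terminating at $h_t(X_t(i))$ for $i = 1, 2$. I would condition first on the three endpoints, writing $W_{t-r} = m_{t-r} + y$ and $h_t(X_t(i)) = m_t + w_i$ with $w_i \in [v, u]$, and express the probability in question as a triple integral against the Gaussian densities $p_{t-r}(m_{t-r} + y) \prod_{i=1,2} p_r(m_t + w_i - m_{t-r} - y)$ over $(y, w_1, w_2)$.

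On this conditioning, the event $\{\wh{h}_t^* \leq u\}$ factorizes into three conditionally independent sub-events, one per piece, each requiring that all side-BBMs emitted in that piece have maximum at time $t$ at most $m_t + u$. Using the time-reversal and centering reduction of Lemma~\ref{l:5.1}, each sub-event becomes the event that a decorated random-walk-like process $\{\wh{W}_{t, \sigma_k} + \wh{h}^{\sigma_k *}_{\sigma_k}\}$ stays below $0$ at its Poisson sampling times, on the relevant sub-interval and with boundary values determined by the endpoints. I would then apply Lemma~\ref{lem:15}: for the shared piece, bound~\eqref{e:52} yields $C (u^+ + 1)((y - u)^- + 1)/(t - r)$, and for each branch, the sharper bound~\eqref{e:52.1} (when its sign hypothesis is met, with the weaker~\eqref{e:52} handling the complementary configurations) yields $C r^{-1} ((u - y)^- + \rme^{-3(y-u)^+/2}) (1 + \rme^{-3(w_i - u)^+/2}) \exp((y - w_i)^2/(2r))$.

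Multiplying these three bounds by the Gaussian densities, the bridge-correction exponentials $\exp((y - w_i)^2/(2r))$ cancel part of each branch's Gaussian weight, leaving a residual Gaussian in $(y, w_1, w_2)$. Its $m_t$-dependent prefactor evaluates to $\exp(-m_t^2/(2t - r)) = \rme^{-t - r + O(\log t)}$ at baseline, accounting for the $\rme^{-t - r}$ factor. The exponential weights coming from the Gaussians at $w_1, w_2$ are of $\rme^{-\sqrt{2}(w_1 + w_2)}$-type, which together with the $(u^+ + 1)$ factor and the $\rme^{\sqrt{2}u}$ coming from the shared-piece endpoint combine, after integration over $w_i \in [v, u]$, into $(u^+ + 1) \rme^{\sqrt{2}u} \rme^{-2\sqrt{2} v} (u - v + 1)^2$. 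The factor $\rme^{-(u-v)^2/(4t)} + \rme^{-(u-v)/2}$ arises from the same Gaussian-versus-linear tail dichotomy used in the proof of Lemma~\ref{l:16}, applied to the $(w_1 + w_2)/2$-centered Gaussian.

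The main technical obstacle is producing the $(r \wedge (t - r))^{3/2}$ denominator. The three Lemma~\ref{lem:15} prefactors combine to $1/((t - r) r^2)$, which is too large when $r \wedge (t - r)$ is small; the improvement comes from the Gaussian integration over $y$. Under the Brownian-bridge conditioning on $(w_1, w_2)$, the conditional law of $y$ is Gaussian with variance $r(t - r)/(2t - r) \asymp r \wedge (t - r)$, and integrating the polynomial factor $((y - u)^- + 1)((u - y)^- + \rme^{-3(y - u)^+/2})^2$ against this Gaussian supplies additional powers of $r \wedge (t - r)$ via standard Gaussian moment bounds. A case analysis separating the regimes $r \leq t/2$ and $r > t/2$, together with the trivial upper bound of $1$ in configurations where Lemma~\ref{lem:15} would overshoot (absorbed into the ``$1 +$'' in the denominator), then delivers the claimed $(1 + (r \wedge (t - r))^{3/2})^{-1}$ factor.
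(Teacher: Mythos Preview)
Your spinal decomposition and the plan to reduce each of the three pieces via Lemma~\ref{l:5.1} and Lemma~\ref{lem:15} are sound, and the paper follows a closely related route. The paper, however, conditions only on the common-ancestor height (a single integral over $z$, with the ancestor written as $\wh h_{t-r}(X_{t-r})=m_{t,r}+z$ where $m_{t,r}:=m_t-m_r-m_{t-r}=\tfrac{3}{2\sqrt{2}}(\log^+r+\log^+(t-r)-\log^+t)$), and for the two branches it applies the already-packaged one-spine bounds~\eqref{e:64} and~\eqref{e:64.3} of Lemma~\ref{l:16}, which absorb both the branch Gaussian density and the $w_i$-integration. This sidesteps your triple integral.

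There is a genuine gap in your last paragraph: the factor $(1+(r\wedge(t-r))^{3/2})^{-1}$ does \emph{not} come from Gaussian moments of $y$. It comes from the logarithmic-centering mismatch $m_{t,r}$; the ancestor's Gaussian density at $m_{t,r}+z$ carries a factor $\rme^{-\sqrt{2}m_{t,r}}$, and by~\eqref{e:574} one has $\rme^{-\sqrt{2}m_{t,r}}\leq C(1+(r\wedge(t-r))^{3/2})^{-1}$. Your bookkeeping cannot produce this. First, the identity $\exp(-m_t^2/(2t-r))=\rme^{-t-r+O(\log t)}$ is false: at leading order $m_t^2/(2t-r)=2t^2/(2t-r)=t+r-r(t-r)/(2t-r)$, which differs from $t+r$ by an order-$t$ quantity when $r\asymp t$; the correct baseline exponent is $-m_{t-r}^2/(2(t-r))-(m_r+m_{t,r})^2/r$, and it is precisely the $m_{t,r}$ inside the second term that you are dropping. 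Second, the Lemma~\ref{lem:15} prefactors $1/((t-r)r^2)$ cancel against the Gaussian normalisation prefactors of order $(t-r)r^2$, leaving an $O(1)$ constant; what remains to be found is an \emph{additional} decay of order $(r\wedge(t-r))^{-3/2}$, whereas Gaussian moment bounds on a polynomial in $y$ would only contribute positive powers of the variance and make the bound larger, not smaller. To repair the argument you must track $m_{t,r}$ explicitly through the branch Gaussians --- at which point the paper's shortcut via Lemma~\ref{l:16} is both cleaner and what actually delivers the denominator.
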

\begin{proof}
In light of Remark~\ref{r:4.5}, by conditioning further on the position of $h_{t-r} \big(X_{t-r}(1)\big)$ (which is also the position of $h_{t-r} \big(X_{t-r}(2)\big)$) the left hand side of \eqref{e:699} can be written as
\begin{equation}
\label{e:502}
\begin{split}
\int_z \wt{\bbP} & \big( \wh{h}_r(X_r) \geq v-z \,, \wh{h}_r^* \leq u-z \big)^2  \\ 
 & \times \wt{\bbP}\Big( \wh{h}_{t-r}(X_{t-r}) - m_{t,r} \in \rmd z, \,\wh{h}^*_t \big(\rmB_{r}(X_t)^\rmc \big) \leq u \Big) ,
\end{split}
\end{equation}
where $m_{t,r} := m_t - m_r - m_{t-r} = \frac{3}{2\sqrt{2}} \big(\log^+ r + \log^+ (t-r) - \log^+ t \big)$ and $X_t$ is the one-spine particle. Observe that $m_{t,r}$ is always non-negative and satisfies
\begin{equation}
\label{e:574}
m_{t,r} - \tfrac{3}{2\sqrt{2}} \log^+ \wedge^t(r)  \in [-2\log 2, 0] \,.
\end{equation}

To bound the second term in the integrand, we use Lemma~\ref{l:5.1} to express it as $\wt{\bbP}  \big( \, \wh{h}_{t-r}(X_{t-r}) - m_{t,r} \!  \in \rmd z \big)$ times
\begin{equation} \label{e:as1}
\bbP  \Big( \max\limits_{k: \sigma_k \in [r,t]} \big(\wh{W}_{t,\sigma_k} \!  + \wh{h}^{\sigma_k*}_{\sigma_k}\big) \!  \leq 0 
\ \Big| \  \wh{W}_{t,r} = z  +  m_{t,r} \! - u,\, \wh{W}_{t,t} =  -u \Big) \,.
\end{equation}
Since $\wh{h}_{t-r}(X_{t-r})$ has a Gaussian distribution with mean $-m_{t-r}$ and variance $t-r$, its probability density function at $\rmd z + m_{t,r}$ is explicitly given by
\begin{equation}
\begin{multlined}
\frac{1}{\sqrt{2 \pi (t-r)}} \exp \bigg(-\tfrac{ \big(\sqrt{2}(t-r) - \frac{3}{2\sqrt{2}} \log^+(t-r) + m_{t,r} + z \big)^2}{2(t-r)} \, \bigg) \\
\leq C (t-r) \rme^{-(t-r) - \sqrt{2} (z +m_{t,r}) } ,
\end{multlined}
\end{equation}
where we have used the bound on $m_{t,r}$ and the fact that $z \frac{\log^+(t-r)}{t-r} - C' \frac{z^2}{t-r}$ is bounded uniformly in $t,r$ and $z$ for any $C' > 0$. 
At the same time, we can use \eqref{e:52} to bound the conditional probability in \eqref{e:as1} by 
$C (t-r)^{-1} (u^+ + 1)\big((u-z-m_{t,r})^+ + 1\big)$. 

Turning to the first term in~\eqref{e:502}, if $z \leq u$ we use \eqref{e:64} to bound it by
\begin{equation}\label{e:677}
\begin{multlined}
C \Big( \rme^{-r} \rme^{-\sqrt{2} (v-z)} (u-v+1) (u-z + 1) \big(
\rme^{-(v-z)^2/4t} + \rme^{(v-z)/2} \big) \Big)^2 \\
\leq
C \rme^{-2r} (u-v+1)^2 \rme^{-2\sqrt{2} v} \rme^{2\sqrt{2}z} (u-z + 1)^2 
\big(\rme^{-(v-z)^2/4t} + \rme^{v-z}\big) \,.
\end{multlined}
\end{equation}
Otherwise, if $z > u$, we use \eqref{e:64} for one factor and \eqref{e:64.3} for the other. This gives
\begin{equation}\label{e:678}
\begin{multlined}
C \Big(\rme^{-r} \rme^{-\sqrt{2} (v-z)} (u-v+1) \big(
\rme^{-\frac{(v-z)^2}{4t}} + \rme^{\frac{(v-z)}{2}} \big) \Big) \\
\times \Big(\rme^{-r} \rme^{-\sqrt{2} (v-z)} (u-v+1) \rme^{-\frac{3}{2}(z-u)} \Big) \\
=
C \rme^{-2r} (\!u\!-\!v+1)^2 \rme^{-2\sqrt{2} v} \rme^{2\sqrt{2}z} \rme^{-\frac{3}{2}(z-u)}
\big(\rme^{-(v-z)^2/4t} + \rme^{(v-z)/2}\big) \,.
\end{multlined}
\end{equation}

We now split the integral in~\eqref{e:502} according to whether $z \leq u$ or $z > u$. In the former range, we use~\eqref{e:677} and bound it by
\begin{equation}
\label{e:602}
\begin{multlined}
C\rme^{-t-r - \sqrt{2}m_{t,r}} (u^+ +1) (u-v+1)^2 \rme^{-2 \sqrt{2} v} \\
\times
\int_{z \leq u} \Big(\rme^{-\frac{(v-z)^2}{4t} + \sqrt{2}z} + \rme^{v + (\sqrt{2}-1)z} \Big) (u-z+1)^3  \rmd z \,.
\end{multlined}
\end{equation}
Expanding the first parenthesis in the integrand and then integrating each of the resulting terms separately, the integral of the second term is bounded by $C \rme^{\sqrt{2}u-(u-v)}$. For the integral of the first term, we observe that the exponent $-(v-z)^2/(4t) + \sqrt{2}z$ is maximized at $z=2\sqrt{2}t +v$. Therefore, if $u < 2\sqrt{2}(1-\eta)t + v$ for some $\eta > 0$, the integral of the first term is bounded by a constant times the value of the integrand at $u$, which gives the bound $C \rme^{\sqrt{2}u} \rme^{-(u-v)^2/4t}$, with $C>0$ depending on $\eta$.
On the other hand, if $u > 2 \sqrt{2}(1-\eta)t  + v$, then we integrate the first term in absolute value over all $\bbR$, thereby obtaining the upper bound
\begin{equation}
\begin{split}
C t \rme^{2t + \sqrt{2} v}(u-v-2\sqrt{2}t + 1)^3 
\leq C \rme^{\sqrt{2}u} \rme^{-(u-v)/2} \,,
\end{split}
\end{equation}
for $\eta$ small enough, where we have used that $2\sqrt{2}(1-\eta)t \leq u-v$.
Putting all of these together, the integral in \eqref{e:602} can always be bounded by 
\begin{equation}
\begin{multlined}
C \rme^{\sqrt{2}u} \big( \rme^{-(u-v)/2} + \rme^{-(u-v)^2/4t} + \rme^{-(u-v)}\big) \\
\leq C \rme^{\sqrt{2}u} \big(\rme^{-(u-v)^2/4t} + \rme^{-(u-v)/2}\big) \,. 
\end{multlined}
\end{equation}

Returning to the integral in~\eqref{e:502}, in the range $z \geq u$ we use~\eqref{e:678} to the
get the upper bound 
\begin{equation}
\begin{multlined}
\rme^{-t-r - \sqrt{2}m_{t,r}} (u^+ +1) (u-v+1)^2 \rme^{-2 \sqrt{2} v} \\
\times \int_{z \geq u}  \rme^{\sqrt{2}z - \frac{3}{2}(z-u)}
\big(\rme^{-(v-z)^2/4t} + \rme^{(v-z)/2} \big)  \rmd z \,.
\end{multlined}
\end{equation}
The sum of the first two exponents maximizes at $z=v-(3-2\sqrt{2})t  \leq u - (3 - 2\sqrt{2})t$, while the sum of the first and the last exponents always maximizes at $u$. This means that $z=u$ determines the bound on the integral and gives $C \rme^{\sqrt{2}u} \big(\rme^{-(u-v)^2/4t} + \rme^{-(u-v)/2}\big)$ as an upper bound exactly as in the previous range.

Altogether, the integral in~\eqref{e:502} is bounded above by
\begin{equation}
C \rme^{-t-r - \sqrt{2}m_{t,r}} (u^+ +1) (u-v+1)^2 \rme^{-2 \sqrt{2} v}
\rme^{\sqrt{2}u} \big(\rme^{-(v-u)^2/4t} + \rme^{-(u-v)/2}\big) \,.
\end{equation}
To make the identification with the right hand side of~\eqref{e:699} just notice that~\eqref{e:574} implies 
\begin{equation}
\rme^{-\sqrt{2}m_{t,r}} \leq C\big(1+(r \wedge (t-r))^{-3/2}\big) \,,
\end{equation}
proving the statement.
\end{proof}

We can now use the many-to-two lemma to bound the second moment.
\begin{lem}
\label{l:6.4}
There exists $C > 0$ such that for all $v \leq u$,
\begin{equation}
\begin{multlined}
\bbE \Big(\cE_t\big([v, \infty)\big)^2 ;\; \wh{h}^*_t \leq u \Big) \\
\leq \rme^{-2\sqrt{2} v} (u-v+1)^2 \rme^{\sqrt{2}u} (u^+ + 1)  
	 \big(\rme^{-(u-v)^2/4t} + \rme^{-(u-v)/2}\big) \,.
\end{multlined}
\end{equation}
\end{lem}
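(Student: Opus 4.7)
The plan is to expand the square, apply the Many-to-two Lemma, condition on the genealogical distance between the two spine particles, and use the conditional bound from Lemma~\ref{lem:18.5}. Specifically, in view of the second identity in~\eqref{e:58.2}, we write
\begin{equation}
\cE_t([v,\infty))^2 \, 1_{\{\wh{h}^*_t \leq u\}} = \sum_{x,y \in L_t} F(x,y), \quad F(x,y) := 1_{\{\wh{h}_t(x) \geq v,\, \wh{h}_t(y) \geq v,\, \wh{h}^*_t \leq u\}}.
\end{equation}
Lemma~\ref{l:4.2} then yields
\begin{equation}
\bbE\Big(\cE_t([v,\infty))^2 ;\; \wh{h}^*_t \leq u\Big) = \rme^{3t}\, \wt{\bbE}^{(2)} \Big[ F\big(X_t(1), X_t(2)\big)\, \rme^{-\rmd(X_t(1),X_t(2))} \Big].
\end{equation}

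Next, I would condition on $\rmd(X_t(1), X_t(2)) = r$ and use Remark~\ref{r:4.5}, which tells us that $t - \rmd(X_t(1),X_t(2)) \stackrel{\mathrm{law}}{=} e \wedge t$ with $e$ exponential of rate~$2$. Thus the law of $\rmd(X_t(1),X_t(2))$ has density $2\rme^{-2(t-r)}\,\rmd r$ on $(0,t)$ (plus an atom at $0$ of mass $\rme^{-2t}$ that contributes negligibly, and which I will absorb into constants). Combining this density with the factor $\rme^{-r}$ and the prefactor $\rme^{3t}$, the above expectation is bounded by
\begin{equation}
C \rme^t \int_0^t \rme^{r}\, \wt{\bbP}^{(2)}\Big(F(X_t(1),X_t(2))=1 \,\Big|\, \rmd(X_t(1),X_t(2)) = r\Big)\, \rmd r.
\end{equation}
Now the conditional probability is exactly the quantity bounded in Lemma~\ref{lem:18.5}, which contributes a factor $\rme^{-t-r}$ and the denominator $1+(r\wedge(t-r))^{3/2}$. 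After cancellation of $\rme^{t+r}$, what remains is
\begin{equation}
C\, \rme^{\sqrt{2}u}(u^+ + 1)\, \rme^{-2\sqrt{2} v}(u-v+1)^2 \Big(\rme^{-(u-v)^2/4t} + \rme^{-(u-v)/2}\Big) \int_0^t \frac{\rmd r}{1+(r \wedge (t-r))^{3/2}}.
\end{equation}

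The final step is to observe that, by symmetry around $t/2$ and direct computation,
\begin{equation}
\int_0^t \frac{\rmd r}{1+(r \wedge (t-r))^{3/2}} \leq 2\int_0^{\infty} \frac{\rmd r}{1+ r^{3/2}} < \infty,
\end{equation}
so the integral is bounded by a constant independent of $t$. This yields the desired upper bound on the truncated second moment. The main quantitative point is that Lemma~\ref{lem:18.5} provides precisely the $(r\wedge(t-r))^{-3/2}$ decay needed to keep the $r$-integral convergent, so no further delicate work is required here; the argument is essentially assembly of the previously established pieces.
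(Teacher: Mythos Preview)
Your proof is correct and follows essentially the same approach as the paper: apply the many-to-two lemma to the indicator in~\eqref{e:58.2}, disintegrate over the law of $\rmd(X_t(1),X_t(2))$ (an atom at $0$ plus density $2\rme^{-2(t-r)}\,\rmd r$), plug in the bound from Lemma~\ref{lem:18.5}, and observe that the remaining $r$-integral $\int_0^t (1+(r\wedge(t-r))^{3/2})^{-1}\,\rmd r$ is bounded uniformly in $t$. One cosmetic remark: the atom at $r=0$ is not ``negligible'' in the sense of being of lower order --- it contributes a term of the same size $CB$ as the integral --- but you are right that it is absorbed into the overall constant.
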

\begin{proof}
In light of the second equation in~\eqref{e:58.2} we can use (the many-to-two) Lemma~\ref{l:4.2}
with $F(x,y) = 1_{\{ \min \{ \wh{h}_t(y), \wh{h}_t(x) \} \geq v\,,\,\, \wh{h}^*_t \leq u\}}$, thereby obtaining
\begin{equation}
\rme^{3t} \wt{\bbE}^{(2)} \left( \rme^{-\rmd(X_t(1), X_t(2))};\, \min \big\{ \wh{h}_t(X_t(1)), \wh{h}_t(X_t(2)) \big\}  \geq v,\, \wh{h}^*_t \leq u
 \right) \,.
\end{equation}
Conditioning on $\rmd(X_t(1), X_t(2))$ and recalling that the distribution of $t-\rmd \big(X_t(1), X_t(2)\big)$ is exponential with rate $2$ truncated at $t$ (see Remark~\ref{r:4.5}), we may use Lemma~\ref{lem:18.5} to bound the last display by
\begin{equation}
\begin{split}
C & \rme^{\sqrt{2}u}  (u^+ + 1) 
\rme^{-2\sqrt{2} v} (u-v+1)^2 \big(\rme^{-(u-v)^2/4t} + \rme^{-(u-v)/2}\big) \\
& \times \rme^{3t} \bigg(  \rme^{-3t} + \int_{r=0}^t \frac{\rme^{-t-r}}{1+(r \wedge (t-r))^{3/2}} \rme^{-r} \rme^{-2(t-r)} \rmd r \bigg).
\end{split}
\end{equation}
Since the term in the second line is bounded by a constant, the result follows.
\end{proof}

\section{Proofs of Cluster Level Set Propositions}
\label{s:ClusterProperties}
The aim in this section is to prove the cluster properties stated in Subsection~\ref{ss:ClusterProperties}. We start with the following lemma that characterizes the limiting cluster distribution in terms of the cluster around the spine particle, conditioned to be the global maximum.  Recall the spinal decomposition from Subsection~\ref{ss:Spine} and that in particular $X_t$ denots the spine particle at time $t$.
\begin{lem}
\label{l:7.0}
Let $\cC \sim \nu$ be distributed according to the cluster law. Then for any $\nu$-continuity set $B \subseteq \bbM$ and any $u \in \bbR$,
\begin{equation}
\label{e:387}
\bbP(\cC \in B) = \lim_{t \to \infty} 
\wt{\bbP} \big( \cC_{t, r_t} (X_t) \in B \, \big|\, \wh{h}_t(X_t) = \wh{h}_t^* = u\big) ,
\end{equation}
where $\cC_{t, r_t} (X_t):= \sum_{y \in \mathrm{B}_{t,r_t} (X_t) } \delta_{h_t(y) -h_t(X_t)}$ denotes the cluster around $X_t$ as defined in \eqref{e:5B}.
\end{lem}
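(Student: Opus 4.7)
My plan is to reduce the spine-based conditional probability on the left to the conditional law of the cluster around the \emph{unique} global maximum particle of standard BBM, and then to invoke the joint convergence \eqref{e:N7}. Let $x^*_t := \argmax_{x \in L_t} h_t(x)$, which is almost surely well-defined.

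First, I would apply the many-to-one Lemma~\ref{l:4.1} with test function $F(x) = 1_{\{\cC_{t,r_t}(x) \in B\}} 1_{\{\wh{h}_t(x) \in A\}} 1_{\{\wh{h}_t(x) = \wh{h}_t^*\}}$ for a Borel set $A \subseteq \bbR$. Since the maximum is a.s. unique, the sum on the right collapses to a single indicator, giving
\begin{equation*}
\wt{\bbP}\big( \cC_{t,r_t}(X_t) \in B,\, \wh{h}_t(X_t) \in A,\, \wh{h}_t(X_t) = \wh{h}_t^* \big) = \rme^{-t}\, \bbP\big( \cC_{t,r_t}(x^*_t) \in B,\, \wh{h}_t^* \in A \big).
\end{equation*}
Specializing to $B = \bbM$ yields $\wt{\bbP}(\wh{h}_t(X_t) = \wh{h}_t^*) = \rme^{-t}$ and, after normalization and disintegration in $u$,
\begin{equation*}
\wt{\bbP}\big( \cC_{t,r_t}(X_t) \in B \,\big|\, \wh{h}_t(X_t) = \wh{h}_t^* = u \big) = \bbP\big( \cC_{t,r_t}(x^*_t) \in B \,\big|\, \wh{h}_t^* = u \big).
\end{equation*}

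Second, since $x^*_t \in L^*_t$, the pair $(\wh{h}_t^*, \cC_{t,r_t}(x^*_t))$ is precisely the atom of $\wh{\cE}_t$ with largest $u$-coordinate. Because the intensity $Z\rme^{-\sqrt{2}u}\rmd u$ is a.s. integrable near $+\infty$, the top atom of the limiting Cox process $\wh{\cE}$ is a.s. well-defined and continuous under the vague topology restricted to any $(M,\infty) \times \bbM$. Together with \eqref{e:N7} this gives the joint weak convergence
\begin{equation*}
\big( \wh{h}_t^*,\, \cC_{t,r_t}(x^*_t) \big) \Longrightarrow (u^1, \cC^1),
\end{equation*}
where $(u^1, \cC^1)$ is the top atom of $\wh{\cE}$. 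By the product form of the intensity in \eqref{e:N7} and the construction in \eqref{e:5}, $\cC^1 \sim \nu$ independently of $u^1$ and $Z$, so $\bbP(\cC^1 \in B \mid u^1 = u) = \nu(B)$ for every $u$.

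Third, I would upgrade the joint weak convergence to pointwise convergence of the conditional law. Here one uses that $\wh{h}_t^*$ admits a continuous density $p_t$ (via Bramson's F-KPP analysis) and $u^1$ admits a continuous density $p$ (from $u^1 = G + \log Z$), with $p_t \to p$ uniformly on compacts. Coupled with equicontinuity in $u$ of $\bbP(\cC_{t,r_t}(x^*_t) \in B \mid \wh{h}_t^* = u)$ — which follows from a shift argument applied to the random-walk representation in Lemma~\ref{l:5.2}, since conditioning on the maximum at height $m_t + u$ instead of $m_t$ corresponds merely to shifting the endpoints of $\wh{W}_{t,s}$ by an amount $O(u)$ that is asymptotically absorbed — a standard density-disintegration argument then yields
\begin{equation*}
\bbP\big( \cC_{t,r_t}(x^*_t) \in B \,\big|\, \wh{h}_t^* = u \big) \longrightarrow \nu(B) \quad \text{for every } u \in \bbR,
\end{equation*}
for any $\nu$-continuity set $B$. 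The principal obstacle is precisely this last step: going from joint weak convergence plus asymptotic independence to pointwise convergence of conditional densities, which requires the uniform density convergence $p_t \to p$ and the equicontinuity in $u$ mentioned above. Both ingredients are standard but require non-trivial verification using the random-walk machinery of Section~\ref{s:Tools}.
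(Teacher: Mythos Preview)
Your approach is essentially the same as the paper's: both reduce the spine conditional to the conditional law of the cluster around the \emph{actual} maximum particle $x_t^*$ via the many-to-one lemma, and both then argue that this conditional law converges to $\nu(B)$. The many-to-one step is identical (the paper just does it last rather than first).

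The difference lies in how the conditional convergence $\bbP\big(\cC_{t,r_t}(x_t^*)\in B\,\big|\,\wh h_t^*=u\big)\to\nu(B)$ is justified. The paper takes a shorter route: it cites the proof of Theorem~2.3 in~\cite{ABBS} for the \emph{unconditional} convergence $\bbP\big(\cC_{t,r_t}(x_t^*)\in B\big)\to\nu(B)$, and then asserts that the conditional version follows directly from the product structure of the limiting intensity in~\eqref{e:N7} together with the absolute continuity of its first coordinate. Your route---extracting the top atom from $\wh\cE_t$, invoking~\eqref{e:N7} for joint convergence, and then disintegrating via uniform density convergence $p_t\to p$ plus equicontinuity in $u$ through the random-walk representation of Lemma~\ref{l:5.2}---is more laborious and, as you yourself flag, leaves the equicontinuity step only sketched. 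The paper's version sidesteps this by leaning on~\cite{ABBS} and treating the passage to the conditional statement as essentially immediate from the product form of the limit; in effect, both arguments face the same disintegration issue, but the paper is content to handle it at the level of ``product intensity plus absolutely continuous marginal'' rather than building the regularity by hand.
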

\begin{proof}
The proof of Theorem~2.3 in \cite{ABBS} shows that $\bbP \big(\cC_{t,r_t}(X_t^*) \in B \big) \longrightarrow \bbP(\cC \in B) $ as $t \to \infty$, where $\cC_{t,r_t}(X_t^*)$ is the cluster around the highest particle $X_t^* : = \argmax_{x \in L_t} h_t(x)$. Thanks to the product structure of the intensity measure governing the limiting Poisson point process and the absolute continuity of its first coordinate, the above limit still holds if we condition on 
$h_t^* = m_t +u$ for any $u \in \bbR$, namely
\begin{equation}
\bbP(\cC \in B) = \lim_{t \to \infty}  \bbP \big(\cC_{t,r_t}(X_t^*) \in B  \,\big|\, h_t^* = m_t +u \big) . 
\end{equation}
We rewrite the probability in the right-hand side above as the conditional expected value of  $\sum_{x \in L_t} 1_{ \{ \cC_{t,r_t} (x)  \in B,\;\wh{h}_t(x)=\wh{h}^*_t\}}$, 
and use (the many-to-one) Lemma~\ref{l:4.1} twice, with 
$x \mapsto 1_{\{\cC_{t,r_t}(x) \in B\} \cap \{ \wh{h}^*_t = \wh{h}_t(x) \in \rmd u\}}$ and then with $x \mapsto 1_{\{ \wh{h}^*_t = \wh{h}_t(x) \in \rmd u\}} $ as the random function $F(x)$, to obtain
\begin{equation}
 \bbP \big(\cC_{t,r_t}(X_t^*) \in B  \mid h_t^* = m_t +u \big) = 
\frac{\wt{\bbP} \big(\cC_{t,r_t}(X_t) \in B \,,\,\, \wh{h}_t(X_t) = \wh{h}_t^* \in \rmd u\big)}{\wt{\bbP} \big(\wh{h}_t(X_t) = \wh{h}^*_t \in \rmd u \big)} \,,
\end{equation}
which is equal to the right hand side of~\eqref{e:387}.
\end{proof}

For what follows in this section, we will mostly work with variants of the conditional probability $\wt{\bbP}(\cdot \mid\wh{h}_t(X_t) = \wh{h}_t^* )$, in which case the configuration $\cC_{t,r_t}(X_t)$ around the spine is exactly the configuration around the maximal particle $X_t^*$ therefore we shorten the notation $\cC_{t,r_t}(X_t)$ into
\begin{equation}
\cC_{t,r_t}^* : = \cC_{t,r_t}(X_t) = \textstyle \sum_{y \in \mathrm{B}_{t,r_t} (X_t) } \delta_{h_t(y) -h(X_t)}
\end{equation}   

We can now begin proving the propositions in Subsection~\ref{ss:ClusterProperties}. We dedicate a subsection to each of these proofs.
\subsection{Proof of Proposition~\ref{p:12} }
The proof of Proposition~\ref{p:12} follows readily from the two results below, whose proofs we postpone to the end of the section. The first one gives the $v \to \infty$ asymptotic of $\wt{\bbE} \, \cC_{t,r_t}^*\big([-v,0]\big)$.

\begin{lem}
\label{l:7.1}
There exists $C > 0$ such that as $t \to \infty$ and then $v \to \infty$,
\begin{equation}
\label{e:389}
\wt{\bbE}\Big(\cC_{t,r_t}^*\big([-v,0]\big) \,\Big|\, \wh{h}^*_t = \wh{h}_t(X_t) = 0\Big)
		\sim C \rme^{\sqrt{2} v} \,.
\end{equation}
\end{lem}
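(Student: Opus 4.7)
The plan is to represent the conditional expectation via the decorated random-walk-like process (Lemma~\ref{l:5.2}), apply Campbell--Mecke to turn the Poisson sum into an integral, estimate each factor using the tools of Sections~\ref{s:Reduction} and~\ref{s:Moments}, and finally integrate. By Lemma~\ref{l:5.2} with $u = 0$,
\[
\wt{\bbE}\big(\cC_{t,r_t}^*([-v,0]) \,\big|\, \wh{h}_t^* = \wh{h}_t(X_t) = 0\big)
\;=\;\frac{\bbE\Big[\sum_{\sigma_k \leq r_t} \cE^{\sigma_k}_{\sigma_k}([-v,0] - \wh{W}_{t,\sigma_k})\,;\;\cA_t \,\Big|\, \wh{W}_{t,0} = \wh{W}_{t,t} = 0\Big]}{\bbP_{0,0}^{t,0}(\cA_t)}.
\]
Applying Campbell--Mecke for the rate-$2$ Poisson process $\cN$ (and using that $h^s$ is independent of $W$, $\cN$, and the other decorations $h^{s'}$), the numerator rewrites as
\[
2\int_0^{r_t}\!\! ds \int_{\bbR}\! dz\;\bbE\big[\cE^s_s([-v-z, -z]);\,\wh{h}^{s*}_s \leq -z\big]\cdot \bbP_{0,0}^{t,0}\big(\wh{W}_{t,s} \in dz,\,\cA_t\big),
\]
with the constraint $\wh{W}_{t,s} + \wh{h}^{s*}_s \leq 0$ (from inserting the atom at $s$) isolated inside the first moment. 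Lemma~\ref{lem:15} with $r = v = w = 0$ gives $\bbP_{0,0}^{t,0}(\cA_t) \sim 2 f^{(0)}(0) g(0)/t$.

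Next, I will estimate each factor in the integrand. The first moment is exactly the quantity controlled by Lemma~\ref{l:18} (with $v \mapsto -v-z$, $u \mapsto -z$, $t \mapsto s$): the upper bound~\eqref{e:2001} controls every $s, z$, while for bounded $z$ and $s$ in a polynomial-in-$v$ window (roughly $s \asymp v^2$), the sharp asymptotic~\eqref{e:2002} gives
\[
\bbE\big[\cE^s_s([-v-z, -z]);\,\wh{h}^{s*}_s \leq -z\big]\;\sim\;\frac{(v+1)g(z)}{\sqrt{\pi}}\,\rme^{\sqrt{2}(v+z)-(v+z)^2/(2s)}.
\]
For the random-walk factor, I will split $\cA_t$ at time $s$ via the Markov property of $\wh{W}_{t,\cdot}$ into conditionally independent events on $[0,s]$ and $[s,t]$ given $\wh{W}_{t,s} = z$, apply Lemma~\ref{lem:15} to each half (to $[s,t]$ directly; to $[0,s]$ via a time-reversal argument), and multiply by the Brownian-bridge density of $\wh{W}_{t,s}$ at $z$. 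After dividing by the denominator $\sim 2 f^{(0)}(0) g(0)/t$ this yields
\[
\bbP_{0,0}^{t,0}\big(\wh{W}_{t,s} \in dz,\,\cA_t\big)\big/\bbP_{0,0}^{t,0}(\cA_t)\;\sim\;C\,F(z)\,s^{-3/2}\rme^{-z^2/(2s)}\,dz
\]
as $t\to\infty$, uniformly over $s \leq r_t$ and bounded $z$, where $F(z)$ is built from the $f,g$ factors of Lemma~\ref{lem:15} and decays so that $z\mapsto \rme^{\sqrt{2}z}g(z)F(z)$ is integrable on $\bbR$. The main contribution comes from $s \asymp v^2$; after the substitution $r = v^2/s$ the $s$-integral becomes $v^{-1}\int_0^\infty r^{-3/2}\rme^{-1/(2r)}dr$, a finite positive multiple of $v^{-1}$, while the $z$-integral is a finite $v$-independent constant, and multiplying by $(v+1)\rme^{\sqrt{2}v}$ from the first-moment asymptotic gives $\sim C\rme^{\sqrt{2}v}$.

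The principal technical obstacle lies in the random-walk factor analysis. The drift $\gamma_{t,\cdot}$ of Lemma~\ref{lem:15} is calibrated to the full interval $[0,t]$, so restricted to $[0,s]$ or $[s,t]$ it does not match the drifts $\gamma_{s,\cdot}$ and $\gamma_{t-s,\cdot}$ needed for a verbatim application of Lemma~\ref{lem:15} to each half; I will address this by absorbing the mismatch into the decorations (whose exponential tails from Lemma~\ref{l:TailBBMMax} are preserved) and re-verifying conditions~\eqref{e:A1}--\eqref{e:A4} of Propositions~\ref{p:A2}--\ref{p:A4} for the modified curves. A secondary task is the uniform control of the $z$-tail of the integral, for which the pointwise upper bounds~\eqref{e:2.11}--\eqref{e:2.12} of Proposition~\ref{p:A2} combined with the exponential tail of $\wh{h}^{s*}_s$ from Lemma~\ref{l:TailBBMMax} will show that contributions from $|z|$ large are negligible.
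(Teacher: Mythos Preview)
Your proposal is correct and follows essentially the same route as the paper: the paper packages your Campbell--Mecke step and the conditioning on $\wh{W}_{t,s}=z$ into auxiliary Lemmas~\ref{l:7.3}, \ref{l:7.4} and~\ref{l:7.5}, then splits the $s$-integral into a main window $s\in[\eta v^2,\eta^{-1}v^2]$ with $|z|<M$ and a negligible remainder, and integrates exactly as you describe. One simplification worth noting: the drift mismatch you flag as the principal obstacle is handled in the paper by observing that $\gamma_{t,u}-\gamma_{t',u}$ is \emph{linear} in $u$ and therefore vanishes under the bridge conditioning, so that $q_t\big((0,0);(s,z)\big)=q_s\big((0,0);(s,z)\big)$ and Lemma~\ref{lem:15} applies directly to each half without any need to modify the decorations or re-verify~\eqref{e:A1}--\eqref{e:A4}.
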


Whereas the second one provides upper bounds for the second moment of $\cC_{t,r_t}^*\big([-v,0]\big)$.
\begin{lem}
\label{l:7.2}
There exists $C > 0$ such that for all $v \geq 0$,
\begin{equation}
\label{e:390}
\limsup_{t \to \infty}
	\wt{\bbE}\Big(\big(\cC_{t,r_t}^*([-v,0])\big)^2 \,\Big|\, \wh{h}^*_t = \wh{h}_t(X_t) = 0\Big)
		\leq C (v+1) \rme^{2\sqrt{2} v} \,.
\end{equation}
\end{lem}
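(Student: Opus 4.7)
The plan is to mimic the proof of Lemma~\ref{l:7.1}, expanding the square into a double sum and splitting the contributions according to whether the two Poisson sampling times in the decorated random-walk representation coincide. By Lemma~\ref{l:5.2}, under the conditioning $\{\wh{h}_t^{*} = \wh{h}_t(X_t) = 0\}$ the cluster $\cC_{t,r_t}^{*}$ is equidistributed with $\sum_{\sigma_k \leq r_t} \cE_{\sigma_k}^{\sigma_k}(\cdot - \wh{W}_{t,\sigma_k})$ under $\bbP_{0,0}^{t,0}(\cdot \mid \cA_t)$, with $\cA_t$ as in that lemma, so squaring decomposes the left side of \eqref{e:390} into $\mathrm{Diag}(t,v) + \mathrm{Off}(t,v)$ collecting terms with $\sigma_j=\sigma_k$ and $\sigma_j \neq \sigma_k$, respectively.

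The diagonal is handled via Palm calculus for $\cN$ together with the second-moment bound of Lemma~\ref{l:6.4}: conditioning on $\wh{W}_{t,s} = z$ produces
\[
\mathrm{Diag}(t,v) \!=\! \int_0^{r_t} \!\! 2\,\rmd s \int \rmd z \, \bbE\!\left[\cE^{s}_{s}([-v-z,-z])^2 ;\, \wh{h}^{s*}_{s} \leq -z\right] \cdot \frac{\bbP_{0,0}^{t,0}(\cA_t,\, \wh{W}_{t,s} \in \rmd z)}{\bbP_{0,0}^{t,0}(\cA_t)},
\]
and Lemma~\ref{l:6.4} with the substitution $v \rightsquigarrow -v-z$, $u \rightsquigarrow -z$ bounds the inner expectation by $C(v+1)^2 \rme^{2\sqrt{2}v}$ times a polynomial and Gaussian expression in $z$. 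Estimating the ratio via Proposition~\ref{p:A2} in the numerator and the lower bound $\bbP_{0,0}^{t,0}(\cA_t) \gtrsim 1/t$ used in the proof of Lemma~\ref{l:7.1}, followed by integration in $z$ and $s$ as in that proof, gives $\mathrm{Diag}(t,v) = O(\rme^{2\sqrt{2}v})$, which is strictly below the target $(v+1) \rme^{2\sqrt{2}v}$.

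The off-diagonal contribution factorises: since $\cE^{\sigma_j}_{\sigma_j}$ and $\cE^{\sigma_k}_{\sigma_k}$ are independent of each other (and of $W$, $\cN$) whenever $\sigma_j \neq \sigma_k$, two-point Palm calculus produces
\[
\mathrm{Off}(t,v) = C \!\!\int\limits_{0 < s_1 < s_2 < r_t} \!\!\! \rmd s_1 \rmd s_2 \iint \rmd z_1 \rmd z_2 \prod_{i=1}^{2}\bbE\bigl[\cE^{s_i}_{s_i}(A_{z_i});\, \wh{h}^{s_i *}_{s_i}\leq -z_i\bigr] \cdot \rho,
\]
with $A_z := [-v-z,-z]$ and $\rho$ the ratio $\bbP_{0,0}^{t,0}(\cA_t, \wh{W}_{t,s_i}\in \rmd z_i,\, i=1,2)/\bbP_{0,0}^{t,0}(\cA_t)$. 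Each first-moment factor is bounded by Lemma~\ref{l:18}, yielding $(v+1) \rme^{\sqrt{2}(v+z_i)}(z_i^- + 1)$ times Gaussian corrections, while $\rho$ is controlled by iterating Proposition~\ref{p:A2} on the three sub-intervals $[0,s_1]$, $[s_1,s_2]$, $[s_2, t]$. Pulling out $\rme^{2\sqrt{2}v}$ and carrying out the Gaussian integrations in $z_1, z_2$ (after matching the exponential factors $\rme^{\sqrt{2} z_i}$ with the bridge-density Gaussians in $z_i$) reduces the problem to a time-integral analogous to the one in the proof of Lemma~\ref{l:7.1}. There, only one of the two substitutions $u_i = (v+z_i)^2/(2 s_i)$ produces the usual cancellation of a power of $v$, leaving the claimed linear growth $(v+1)$.

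The main technical difficulty is the uniform upper bound on $\rho$, i.e.\ on the probability that the decorated random-walk bridge from $(0,0)$ to $(t,0)$ stays below $0$ at all Poisson sampling times while passing through specified heights $z_1, z_2$ at $s_1 < s_2$. Proposition~\ref{p:A2} supplies such a bound on a single sub-interval, but iterating it three times requires shifting and tilting the boundary conditions on each piece; Lemma~\ref{l:lisa} guarantees that the tilted versions of $\gamma_{t,\cdot}$ still satisfy \eqref{e:A3} with the same $\delta \in (0,1/2)$, so the iteration is valid, but the careful tracking of the polynomial corrections at the endpoints $s_1 \to 0$, $s_2 - s_1 \to 0$, $t - s_2 \to 0$ and their interaction with the Gaussian factors on each sub-interval forms the bulk of the work and must be done carefully to avoid a spurious additional factor of $v$.
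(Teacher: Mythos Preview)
Your approach is essentially the same as the paper's. The paper packages exactly your diagonal/off-diagonal Palm decomposition into Lemma~\ref{l:7.3} (which gives the representation $4\int j_{t,v}(s,s')\,\rmd s\,\rmd s' + 2\int j_{t,v}(s,s)\,\rmd s$) and your pointwise bounds into Lemma~\ref{l:7.4}, whose proof conditions on $\wh{W}_{t,s}$, $\wh{W}_{t,s'}$ and uses precisely the three-interval iteration of Lemma~\ref{lem:15}/Proposition~\ref{p:A2} together with Lemma~\ref{l:18} on each factor for $s\neq s'$, and Lemma~\ref{l:6.4} for $s=s'$; the proof of Lemma~\ref{l:7.2} itself is then a three-line integration of those bounds.

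One small correction: your claim that $\mathrm{Diag}(t,v)=O(\rme^{2\sqrt{2}v})$ is too optimistic. Following the paper's computation, the diagonal bound from Lemma~\ref{l:6.4} carries a factor $(v+1)^2$, and the $s$-integral $\int_0^\infty s^{-1/2}(s+1)^{-1}\big(\rme^{-v^2/(4s)}+\rme^{-v/2}\big)\,\rmd s$ only recovers one power of $(v+1)^{-1}$, so the diagonal already sits at the target order $(v+1)\rme^{2\sqrt{2}v}$, not strictly below it. This does not affect the validity of the argument.
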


\begin{proof}[Proof of Proposition~\ref{p:12}]
By Lemma~\ref{l:7.2}, for all $v \geq 0$ there exist $t_0 \geq 0$ such that the collection of random variables $\big\{\cC_{t,r_t}^*([-v,0])
:\: t \geq t_0\big\}$ is uniformly integrable under the conditional measure $\wt{\bbP}(\cdot \,|\, \wh{h}^*_t = \wh{h}_t(X_t) = 0\big)$ and therefore in light Lemma~\ref{l:7.0} with $u=0$, the expectation of $\cC^*_{t,r_t}\big([-v,0]\big)$ under this measure converges as $t \to \infty$ to the expectation of $\cC([-v,0])$ under $\nu$, provided that $\cC$ does not charge $-v$ with positive probability. The latter condition, which is equivalent to $[-v, 0]$ being a stochastic continuity set for $\cC$, is needed in order to ensure that $\cC_{t,r_t}^*([-v,0])$ converges weakly to $\cC([-v,0])$ under the conditional measure.

Now, although $[-v, 0]$ is indeed $\cC$-stochastic continuous, we can avoid having to prove this by proceeding in a different way.
Given $v \in \bbR$, we can always find $v - 1/v < v' \leq v \leq v'' \leq v+1/v$ such that $v', v''$ are not charged by $\cC$ with probability $1$. The existence of such points is assured by the fact that the set of points which are charged with positive probability by $\cC$ is at most countable. Then by monotonicity,
\begin{equation}
\begin{split}
\bbE \cC([-v, 0]) &\geq \bbE \cC([-v', 0]) =:\lim\limits_{t \to \infty} \wt{\bbE} \cC^*_{t,r_t}([-v', 0])\,; \\
\bbE \cC([-v, 0]) &\leq   \bbE \cC([-v'',0]) = \lim\limits_{t \to \infty} \wt{\bbE} \cC^*_{t,r_t} \big([-v'', 0]) \,.
\end{split}
\end{equation}
Now, the first and last quantities are asymptotically equivalent to $C\rme^{\sqrt{2}v'}$ and $C \rme^{\sqrt{2}v''}$ respectively, which in light of the choice of $v', v''$ are also asymptotic to 
$C \rme^{\sqrt{2}v}$. This shows the first part of the proposition with $C_\star = C$, where $C$ is the constant in Lemma~\ref{l:7.1}.

The second part of the proposition follows from Lemma~\ref{l:7.0}, Lemma~\ref{l:7.2} and an application of Fatou's lemma, whenever $[-v,0]$ is a stochastic continuity sets under $\cC$ (as a process on $\bbR_-$). As before, if this is not the case, we pick $v''$ as before and use monotonicity again.
\end{proof}

It remains therefore to prove Lemma~\ref{l:7.1} and Lemma~\ref{l:7.2} and at this point we can appeal to Lemma~\ref{l:5.2} to represent the cluster $\cC^*_{t,r_t}$ in terms of the decorated random walk process of Section~\ref{s:Reduction}. This is the content of the next lemma, but before we can state it, we need several new definitions and/or abbreviations. First, recall the random objects: $W$, $H$ and $\cN$ from Section~\ref{s:Reduction} and that $\cE^s_t$ is the extremal process of $h^s_t$. Next, let us abbreviate for $t \geq 0$, 
\begin{equation}
\label{e:694}
\cA_t := \Big \{
\max_{k : \sigma_k \in [0,t]} \big(\wh{W}_{t,{\sigma_k}} + \wh{h}^{\sigma_k*}_{\sigma_k}\big) \leq 0 \Big\} 
\quad, \qquad
\wh{\bbP}_t(\cdot) = \bbP \big(\cdot\,\big|\, \wh{W}_{t,0} = \wh{W}_{t,t} = 0 \big) \,,
\end{equation}
with $\wh{\bbE}_{t}$ the corresponding expectation.
Finally, for $v \geq 0$ and $0 \leq s \leq t$ we set $j_{t,v}(s) := \wh{\bbE}_t J_{t,v}(s)$ where
\begin{equation}
\label{e:394}
J_{t,v}(s):= \cE_s^s \big([-v,0] - \wh{W}_{t,s}\big) 1_{\{\wh{h}^{s*}_s \leq -\wh{W}_{t,s}\}}  \times 1_{\cA_t} \,,
\end{equation}
and for $0 \leq s \leq s' \leq t$, also $j_{t,v}(s,s') := \wh{\bbE}_t J_{t,v}(s,s')$ where
\begin{equation}
\begin{split}
J_{t,v}(s,s'):=  &
\cE_s^s \big([-v,0] - \wh{W}_{t,s}\big) 1_{\{\wh{h}^{s*}_s \leq -\wh{W}_{t,s}\}} \\
& \, \times
\cE_{s'}^{s'} \big([-v,0] - \wh{W}_{t,s'}\big) 1_{\{\wh{h}^{s'*}_{s'} \leq -\wh{W}_{t,s'}\}} \times 1_{\cA_t} \,.
\end{split}
\end{equation}

We now have,
\begin{lem}
\label{l:7.3}
Let $v \geq 0$. Then.
\begin{equation}
\label{e:396}
\wt{\bbE}\Big(\cC_{t,r_t}^*\big([-v,0]\big) ;\; \wh{h}^*_t \leq 0 \,\big|\, \wh{h}_t(X_t) = 0\Big) 
= 2 \int_{s=0}^{r_t} j_{t,v}(s) \rmd s\,.
\end{equation}
and
\begin{equation}
\label{e:397}
\begin{split}
\wt{\bbE} \Big(\big(\cC_{t,r_t}^*([-v,0])\big)^2 ;\; \wh{h}^*_t \leq 0\,\big|\, 
	\wh{h}_t(X_t) = 0\Big) = 4 &\int_{s,s'=0}^{r_t} j_{t,v}(s,s') \rmd s \rmd s' \\
	& + 2 \int_{s=0}^{r_t} j_{t,v}(s,s) \rmd s \,.
\end{split}
\end{equation}
\end{lem}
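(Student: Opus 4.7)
The proof proceeds in two stages: first, a reduction to the decorated random-walk representation via Lemma~\ref{l:5.2}; second, an application of Palm calculus (Slivnyák--Mecke) to the Poisson point process $\cN$.

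\textbf{Reduction step.} Lemma~\ref{l:5.2} identifies the law of $\cC_{t,r}(X_t)$ under $\wt{\bbP}(\,\cdot\mid \wh h_t^\ast = \wh h_t(X_t) = 0)$ with that of $\sum_{\sigma_k \leq r}\cE^{\sigma_k}_{\sigma_k}(\cdot - \wh W_{t,\sigma_k})$ under $\wh{\bbP}_t(\,\cdot\mid \cA_t)$. Multiplying through by the normalizing constant $\wt{\bbP}(\wh h^\ast_t \leq 0 \mid \wh h_t(X_t) = 0)$, which by Lemma~\ref{l:5.1} (with $u=w=0$) equals $\wh{\bbP}_t(\cA_t)$, I obtain the un-normalized identity
\[
\wt{\bbE}\Big(F(\cC^\ast_{t,r_t});\,\wh h^\ast_t \leq 0 \,\Big|\, \wh h_t(X_t)=0\Big)
= \wh{\bbE}_t\Big[F\Big(\sum_{\sigma_k \leq r_t}\cE^{\sigma_k}_{\sigma_k}(\cdot - \wh W_{t,\sigma_k})\Big)\,1_{\cA_t}\Big],
\]
valid for any nonnegative measurable functional $F$ on $\bbM$ by a standard monotone-class extension of the distributional identity. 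The two statements~\eqref{e:396} and~\eqref{e:397} correspond to $F(\cC) = \cC([-v,0])$ and $F(\cC) = \cC([-v,0])^2$ respectively.

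\textbf{First moment via Slivnyák--Mecke.} Under $\wh{\bbP}_t$ the Poisson point process $\cN$ of intensity $2\,\rmd s$ retains its law and remains independent of $(W,H)$. The key bookkeeping observation is that adjoining a deterministic atom at $s\in[0,t]$ to $\cN$ modifies the barrier event by one extra pointwise constraint, namely $1_{\cA_t(\cN+\delta_s)} = 1_{\cA_t(\cN)}\cdot 1_{\{\wh h^{s\ast}_s \leq -\wh W_{t,s}\}}$. Applying the one-point Slivnyák--Mecke formula to the nonnegative integrand $g(s,\cN)= \cE^s_s([-v,0]-\wh W_{t,s})\cdot 1_{[0,r_t]}(s)\cdot 1_{\cA_t(\cN)}$ therefore produces the factor $2$ from the intensity and the added indicator, collapsing everything into $2\int_0^{r_t} \wh{\bbE}_t J_{t,v}(s)\,\rmd s$ and thus yielding~\eqref{e:396}.

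\textbf{Second moment.} Setting $A_k := \cE^{\sigma_k}_{\sigma_k}([-v,0]-\wh W_{t,\sigma_k})\,1_{\{\sigma_k\leq r_t\}}$, I would expand the square as $\bigl(\sum_k A_k\bigr)^2 = \sum_k A_k^2 + \sum_{k\neq k'} A_k A_{k'}$. The diagonal sum is treated by the same Slivnyák--Mecke step as in the previous paragraph, noting that at the inserted point the two $\cE$-factors coincide and the squared indicator collapses to itself; this matches the very definition of $J_{t,v}(s,s)$ and produces the contribution $2\int_0^{r_t} j_{t,v}(s,s)\,\rmd s$. For the off-diagonal I would invoke the two-point Campbell identity
\[
\wh{\bbE}_t\sum_{\sigma_k\neq\sigma_{k'}} g(\sigma_k,\sigma_{k'},\cN)
= 4\iint\wh{\bbE}_t\, g(s,s',\cN + \delta_s + \delta_{s'})\,\rmd s\,\rmd s',
\]
together with the identity $1_{\cA_t(\cN+\delta_s+\delta_{s'})} = 1_{\cA_t(\cN)}\cdot 1_{\{\wh h^{s\ast}_s \leq -\wh W_{t,s}\}}\cdot 1_{\{\wh h^{s'\ast}_{s'} \leq -\wh W_{t,s'}\}}$ (valid for distinct $s,s'\in[0,t]$). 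Since the BBMs $h^s$ and $h^{s'}$ are independent for $s\neq s'$, the two $\cE$-factors in the integrand behave correctly under the expectation, producing exactly $4\iint_0^{r_t} j_{t,v}(s,s')\,\rmd s\,\rmd s'$ and completing~\eqref{e:397}. The only delicate point in the whole argument is the pointwise transformation of $\cA_t$ under Palm insertion, and the definitions of $J_{t,v}(s)$ and $J_{t,v}(s,s')$ have been crafted precisely to absorb the indicators produced by this insertion, so no real obstacle is anticipated.
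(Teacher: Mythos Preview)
Your proposal is correct and follows essentially the same route as the paper: reduce to the decorated random-walk representation via Lemmas~\ref{l:5.1} and~\ref{l:5.2}, then apply Palm--Campbell (Slivnyak--Mecke) calculus for the Poisson process $\cN$, using precisely the observation that the middle indicator in the definition of $J_{t,v}$ absorbs the extra constraint produced by the Palm insertion. The only cosmetic difference is that for the second moment the paper applies the second-order Palm formula to the product measure $\cN^2$ directly (whose second-moment intensity $\cM^2$ decomposes as $4\,\rmd s\,\rmd s' + 2\,\delta_{s=s'}\,\rmd s$), whereas you split diagonal and off-diagonal by hand first; the two are the same computation.
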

\begin{proof}
Let us start with~\eqref{e:396}. By Lemma~\ref{l:5.1} with $r=0$, $u=w=0$ and Lemma~\ref{l:5.2} with $r=r_t$ (ignoring the law of $h_{t-s}(X_{t-s})$), we may write the left hand side as
\begin{equation}
\wh{\bbE}_t \int_{s=0}^{r_t} J_{t,v}(s) \cN(\rmd s) \,.
\end{equation}

Since $\cN$ is a Poisson point process on $\bbR_+$ with intensity $2 \rmd x$, it associated Palm kernel can be written as $\big(\bbP(\cN_s \in \cdot) :\: s \geq 0\big)$ where $(\cN_s :\: s \geq 0)$ is a family of point processes such that 
$\cN_s \stackrel{\mathrm{law}}{=} \cN +\delta_s$, assumed to be defined alongside  $W$ and $H$ and independent of them.
Now, conditional on $\cF := \sigma(W, H)$ the random function $J_{t,v}$ depends only on $\cN$  (through the last indicator in its definition). Therefore by Palm-Campbell theorem (see, e.g. Proposition 13.1.IV in ~\cite{daley2007introduction}) and independence between $\cN_s$ and $\cF$,
\begin{equation}
\wh{\bbE}_t \Big(\int_{s=0}^{r_t} J_{t,v}(s) \cN(\rmd s) \,\Big|\, \cF \Big)
= 2 \int_{s=0}^{r_t} \wh{\bbE}_t \big( J_{t,v}(\cN_s,s) \,\big|\, \cF \big) \rmd s  
\ ,\quad \wh{P}_t-\text{a.s.} \,,
\end{equation}
where $J_{t,v}(\cN_s,s)$ is defined as $J_{t,v}(s)$ in~\eqref{e:394} only with $\cN_s$ replacing $\cN$. However, because of the middle indicator in definition~\eqref{e:394}, there is in fact no difference between $J_{t,v}(\cN_s,s)$ and $J_{t,v}(s)$. Taking now expectation with respect to $\wh{\bbE}_t$ and using Fubini's theorem to exchange between the integral and the expectation on the right hand side, we obtain~\eqref{e:396}.

The second claim of the lemma is quite similar. We first write the left hand side of~\eqref{e:397} as
\begin{equation}
\wh{\bbE}_t \Big(\int_{s=0}^{r_t} J_{t,v}(s) \cN(\rmd s)\Big)^2 =
\wh{\bbE}_t \int_{s,s'=0}^{r_t} J_{t,v}(s,s') \cN^2(\rmd s \times \rmd s') \,,
\end{equation}
where $\cN^2$ is the product measure of $\cN$ with itself. Letting $(\cN_{s,s'} :\: s,s' \geq 0)$ be a collection of point process which are independent of $W$ and $H$ and with 
$\cN_{s,s'} \stackrel{\mathrm{law}}{=} \cN +\delta_s + 1_{s' \neq s} \delta_{s'}$, we now use the second order Palm-Campbell Theorem (see, e.g. Ex 13.1.11 in~\cite{daley2007introduction} or alternatively just apply the usual theorem to $\cN^2$). This shows that the last expectation is equal to
\begin{equation}
\int_{s,s'=0}^{r_t} \wh{\bbE}_t \big( J_{t,v}(\cN_{s,s'}, s,s') \big) \cM(\rmd s \times \rmd s') 
= \int_{s,s'=0}^{r_t} \wh{\bbE}_t \big( J_{t,v}(s,s') \big) \cM(\rmd s \times \rmd s') \,,
\end{equation}
where in the first integral $J_{t,v}(\cN_{s,s'}, s,s')$ is defined as $J_{t,v}(s,s')$ only with $\cN_{s,s'}$ replacing $\cN$, again making no difference, and in the second integral $\cM^2$ is the intensity measure of the process $\cN^2$ on $\bbR_+^2$. Since $\cM^2$ satisfies $\cM^2(A) := 4 \int_{s,s'=0}^\infty 1_A(s,s') \rmd s \rmd s' + 2\int_{s=0}^\infty 1_A(s,s) \rmd s$ for all Borel sets $A \subseteq \bbR^2_+$, the result follows.
\end{proof}

Next, we need asymptotics and bounds on $j_{t,v}(s)$ and $j_{t,v}(s,s')$. This is where the results of Section~\ref{s:Moments} will be used. For what comes next, given $M \geq 0$, we shall need the following refinements of $J_{t,v}(s)$ from~\eqref{e:394}:
\begin{equation}
\label{e:901}
J_{t,v}^{< M}(s) = J_{t,v}(s) 1_{\{|\wh{W}_{t,s}| < M\}}
\,, \qquad
J_{t,v}^{\geq M}(s) = J_{t,v}(s) 1_{\{|\wh{W}_{t,s}| \geq M\}} \,,
\end{equation}
with $j_{t,v}^{<M}(s)$, $j_{t,v}^{\geq M}(s)$ the respective expectations under $\wh{\bbE}_t$. 
We start with upper bounds.
\begin{lem}
\label{l:7.4}
There exists $C, C' > 0$ such that for all $t \geq 0$, $0 \leq s \leq t/2$, $v \geq 0$ and $M \geq 0$,
\begin{equation}
\label{e:603}
j_{t,v}^{\geq M}(s) \leq C \frac{  \rme^{\sqrt{2} v} (v+1) }{t (s+1) \sqrt{s}} \times  \rme^{-C' M} \Big(\rme^{- \frac{v^2}{16s}} + \rme^{-\frac{v}{2}}\Big) \,.
\end{equation}
Also, there exists $C > 0$ such that for all $t \geq 0$, $0 \leq s \leq s' \leq t/2$ and $v \geq 0$,
\begin{equation}\label{e:604}
j_{t,v}(s,s')  \leq C \, \frac{ (v+1)^2 \, \rme^{2\sqrt{2}v}  \Big(\rme^{-\frac{v^2}{16s}} + \rme^{-\frac{v}{4}}\Big) \Big(\rme^{-\frac{v^2}{16s'}} + \rme^{-\frac{v}{4}}\Big) }{t (s+1) (s'-s+1) \sqrt{s \, (s'-s +1_{s=s'} )} }     
\end{equation}
\end{lem}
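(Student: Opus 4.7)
The plan is to condition on $\wh W_{t,s}=z$, factor the expectation into a BBM contribution and two random-walk pieces on $[0,s]$ and $[s,t]$, and integrate the resulting expression against the Gaussian density of $\wh W_{t,s}$ under $\wh\bbP_t$. Since the BBMs $h^\sigma$ are independent and a pinned Brownian bridge splits into two independent bridges, the event $\cA_t$ decomposes as $\cA_{[0,s]}\cap\cA_{[s,t]}$, yielding
\begin{equation*}
j^{\ge M}_{t,v}(s)=\int_{|z|\ge M}E_{\mathrm B}(z)\,P_{[0,s]}(z)\,P_{[s,t]}(z)\,\varphi_{t,s}(z)\,\rmd z,
\end{equation*}
with $E_{\mathrm B}(z)=\bbE\big(\cE^s_s([-v-z,\infty));\wh h^{s*}_s\le -z\big)$, $P_{[a,b]}(z)=\wh\bbP_t(\cA_{[a,b]}\mid\wh W_{t,s}=z)$, and $\varphi_{t,s}(z)\le Cs^{-1/2}\exp(-(z+\gamma_{t,s})^2 t/(2s(t-s)))$ for $s\le t/2$.

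For the pointwise bounds, Lemma~\ref{l:18} directly gives $E_{\mathrm B}(z)\le C(v+1)(z^-+1)\rme^{\sqrt2(v+z)}\big(\rme^{-(v+z)^2/(4s)}+\rme^{-(v+z)/2}\big)$. For the random-walk pieces, a deterministic linear tilt turns the pinned piece on $[0,s]$ into a standard bridge from $0$ to $z$ on $[0,s]$ minus the effective drift $\tfrac{3}{2\sqrt 2}(\log^+u-\tfrac{u}{s}\log^+s)$ (the $\log^+t$ terms cancel explicitly), which satisfies the hypotheses of Proposition~\ref{p:A2} by Lemma~\ref{l:lisa}; analogously on $[s,t]$. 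Since the two endpoints of each bridge multiply to $\le 0$, the sharper estimate~\eqref{e:2.12} applies and yields $P_{[0,s]}(z)\le C(z^-+\rme^{-3z^+/2})s^{-1}\rme^{z^2/(2s)}$ and $P_{[s,t]}(z)\le C(z^-+\rme^{-3z^+/2})(t-s)^{-1}\rme^{z^2/(2(t-s))}$.

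The key algebraic step is the exponential collapse: the partial-fraction identity $t/(s(t-s))=1/s+1/(t-s)$ implies that the $z^2$ terms from $P_{[0,s]}(z)P_{[s,t]}(z)$ exactly cancel against the variance in $\varphi_{t,s}$, leaving only the subexponential factor $\exp(-(2z\gamma_{t,s}+\gamma_{t,s}^2)t/(2s(t-s)))=O(\rme^{C|z|/\sqrt{s\vee 1}})$ (using $|\gamma_{t,s}|\le C(1+\log^+ s)$). The integrand then becomes a polynomial-in-$z$ multiple of $\rme^{\sqrt2(v+z)}(\rme^{-(v+z)^2/(4s)}+\rme^{-(v+z)/2})(z^-+\rme^{-3z^+/2})^2/(s(t-s)\sqrt s)$. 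Splitting $(v+z)^2/(4s)\ge v^2/(8s)-z^2/(4s)$ in the Gaussian regime, and using $3>\sqrt 2$ to keep $\rme^{\sqrt 2 z-3z^+}$ summable in the exponential regime, the integration over $|z|\ge M$ produces the factor $\rme^{-C'M}$ while the remaining $z$-independent part collects to $(v+1)\rme^{\sqrt 2 v}(\rme^{-v^2/(16s)}+\rme^{-v/2})$. Bounding $(t-s)^{-1}\le 2/t$ completes~\eqref{e:603}.

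The proof of~\eqref{e:604} proceeds identically, now pinning $(\wh W_{t,s},\wh W_{t,s'})=(z_1,z_2)$ and factoring into three random-walk pieces on $[0,s]$, $[s,s']$, $[s',t]$ together with two independent BBM contributions bounded by Lemma~\ref{l:18}; the additional $(s'-s+1)^{-1}$ factor emerges from the middle bridge's Gaussian normalisation, and for $s=s'$ one replaces Lemma~\ref{l:18} by the second-moment estimate Lemma~\ref{l:6.4}, which accounts for the convention $(s'-s+1_{s=s'})^{1/2}=1$ in the denominator. The main technical obstacle is ensuring the exact cancellation of the Gaussian $z^2$-terms in the exponential collapse---without it the integrand fails to be integrable over $\bbR$---and carefully splitting the BBM exponent $-(v+z)^2/(4s)$ into a fixed factor $\rme^{-v^2/(16s)}$ and an absorbable $z$-dependent Gaussian factor so the subsequent $z$-integration contributes only an $O(1)$ constant rather than something depending on $v$.
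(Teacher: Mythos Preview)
Your proposal is essentially correct and follows the paper's approach: condition on $\wh W_{t,s}$ (respectively $(\wh W_{t,s},\wh W_{t,s'})$), factor via the Markov property and independence of $H$, bound the BBM contribution by Lemma~\ref{l:18} (respectively Lemma~\ref{l:6.4} when $s=s'$), bound the random-walk barrier pieces by Lemma~\ref{lem:15}, and integrate in $z$.

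The one organisational difference worth noting is in the Gaussian cancellation. You apply the sharp bound~\eqref{e:52.1} to \emph{both} barrier pieces and then use the identity $t/(s(t-s))=1/s+1/(t-s)$ to cancel the full Gaussian exponent in $\varphi_{t,s}$, leaving only the harmless cross term $\exp\big(-(2z\gamma_{t,s}+\gamma_{t,s}^2)\,t/(2s(t-s))\big)$. The paper is less symmetric: it replaces the variance $s(t-s)/t$ in $\varphi_{t,s}$ by its upper bound $s$, applies~\eqref{e:52.1} only to the $[0,s]$ piece (and only for $z>0$), and uses the simpler bound~\eqref{e:52} on $[s,t]$. Both routes land on the same integrand up to constants; yours is arguably cleaner.

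Two small points to fix. First, the replacement of $s^{-1}$ by $(s+1)^{-1}$ in the final bound requires a separate one-line argument for $s\in[0,1]$: there you simply bound $P_{[0,s]}(z)\le 1$ and use $\varphi_{t,s}(z)\le Cs^{-1/2}$ directly. Second, in the two-point case the factor $(s'-s+1)^{-1}$ does \emph{not} come from the Gaussian normalisation of the joint density (which contributes $(s'-s)^{-1/2}$) but from applying~\eqref{e:52} to the middle barrier piece on $[s,s']$; note that there the endpoints $z,z'$ need not satisfy $zz'\le 0$, so~\eqref{e:52.1} is unavailable and you must use~\eqref{e:52} combined with the trivial bound $1$ when $s'-s<1$.
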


\begin{proof}
Starting with the first inequality, by conditioning on $\wh{W}_{t,s}$ we write $j_{t,v}^{\geq M}(s)$ 
\begin{equation}\label{as.int.1}
\int_{|z| \geq M} q_t\big((0,0);(s,z)\big) \times e_{s,v}(z)\times q_t\big((s,z);(t,0)\big) \times p_t(s,z) \rmd z \,,
\end{equation}
where $q_t\big((s_1,z_1),(s_2,z_2)\big)$ is given by 
\begin{equation}
\label{e:406}
\bbP \Big( \max_{k : \sigma_k \in [s_1,s_2]} \big(\wh{W}_{t,{\sigma_k}} + \wh{h}^{\sigma_k*}_{\sigma_k}\big) \leq 0 \,\Big|\, \wh{W}_{t,s_1} = z_1,\, \wh{W}_{t,s_2} = z_2 \Big);
\end{equation}
$e_{s,v}(z)  := \bbE \big(\cE_s ([-v,0]-z) ;\; \wh{h}^*_s \leq -z \big)$ and $p_t(s,z) $ is the (conditional) density function $\bbP \big(\wh{W}_{t,s} \in \rmd z \,\big|\, \wh{W}_{t,0} = \wh{W}_{t,t} = 0 \big) / \rmd z$.

Observe that the definition of $q_t$ above does not change if we replace $\wh{W}_{t,u}$ by $\wh{W}_{t',u}$ for any $t' \geq s_2$ everywhere inside the probability brackets. Indeed, recalling the definition of $\wh{W}_{t,u}$ in~\eqref{e:20.5}, we see that the difference $\wh{W}_{t,u} - \wh{W}_{t',u}$ is a (deterministic) linear function of $u$, which is lost under the conditioning, because of the Gaussian law of $\wh{W}$. In particular, we can rewrite the integral as
\begin{equation}
\label{e:5.22}
\int_{|z|\geq M} q_s\big((0,0);(s,z)\big) \times e_{s,v}(z)\times q_t\big((s,z);(t,0)\big) \times p_t(s,z) \rmd z \,.
\end{equation}

Now, conditioned on $\wh{W}_{t,0} = \wh{W}_{t,t}$ the law of $\wh{W}_{t,s}$ is Gaussian with mean $-\gamma_{t,s}$ and variance $s(t-s)/t$. Thanks to the assumption $s \leq t/2$, the above variance always lies inside $[s/2, s]$ and hence $p_t(s,z)$ is smaller than 
$C \,s^{-1/2} \rme^{-(z+\gamma_{t,s})^2/2s}$. Using Lemma~\ref{lem:15}, either the first upper bound if $z \leq 0$ or the second if $z \geq 0$, we have
\begin{equation}
q_s\big((0,0);(s,z)\big) \times p_t(s,z) \leq C  \frac{\big(z^- + \rme^{-\frac{3}{2}z^+} \big)}{(s+1) \sqrt{s} } \,.
\end{equation}
Above we have replaced $s^{-1}$ from~\eqref{e:52} by $(s+1)^{-1}$. To justify such replacement we notice that if $s \geq 1$, we can compensate for this change increasing the constant $C$. Whereas, if $s \in [0, 1]$ we just bound the left hand side above by $p_t(s,z)$ which is always smaller than the right hand side, again increasing the constant if necessary.

Using the upper bound in Lemma~\ref{l:18} to estimate $e_{s,v}(z)$ and again the first upper bound in Lemma~\ref{lem:15} for $q_t\big((s,z);(t,0)\big)$, the integral in~\eqref{e:5.22} is smaller than
\begin{multline}
\label{e:511}
C \int_{|z| \geq M}  \frac{\big(z^- + \rme^{-\frac{3}{2}z^+} \big)}{(s+1) \sqrt{s} }  \rme^{\sqrt{2}(v+z)} (v+1) (z^-+1) 
 \Big(\rme^{-\frac{(v+z)^2}{4s}} + \rme^{-\frac{v+z}{2}} \Big) \frac{z^-+1}{t-s}  \rmd z \\
\leq C \frac{  \rme^{\sqrt{2} v} (v+1) }{t (s+1) \sqrt{s}}  \int (z^-+1)^2 \Big(z^- + \rme^{-\frac{3}{2}z^+} \Big) \Big(\rme^{-\frac{(v+z)^2}{4s} + \sqrt{2}z} + \rme^{-\frac{v+z}{2} + \sqrt{2} z} \Big) \rmd z \,,
\end{multline}
where the range of the last integral is $|z|\geq M$.

We now distribute the last parenthesis in the integrand and obtain two distinct integrals. Observing that
$1/2 \leq \sqrt{2} \leq 3/2$, the first integral can be bounded above by $C \rme^{-C' M} \rme^{-v^2/16s}$ if $z \geq -v/2$ and otherwise by 
\begin{equation}
\rme^{-\sqrt{2}((v/2) \vee M)} ((v/2) \vee M + 1)^3 \leq C \rme^{-C' M} \rme^{-v/2} \,.
\end{equation} 
The second can just be bounded by $C\rme^{-C'M} \rme^{-v/2}$. Combining these bounds the last integral in~\eqref{e:511} can always be bounded by $C \rme^{-C'M} \big(\rme^{-v^2/16s} + \rme^{-v/2} \big)$, which shows the first part of the lemma. 

Moving on to the second, assume first that $s \neq s'$ and condition this time on $\wh{W}_{t,s}$ and $\wh{W}_{t,s'}$ to write $j_{t,v}(s,s')$ as
\begin{equation} \label{eq:as.1.123}
\begin{split}
\int_{z,z'} q_s\big((0,0);(s,z)\big) & \times e_{s,v}(z)\times q_{s'}\big((s,z); (s',z')\big) \\
& \times e_{s',v}(z') \times q_t\big((s',z');(t,0)\big) \times p_t \big((s,z); (s',z')\big) \rmd z \rmd z'\,,
\end{split}
\end{equation}
where $p_t\big((s,z);(s',z')\big) = \bbP \big(\wh{W}_{t,s} \in \rmd z, \wh{W}_{t,s'} \in \rmd z' \,\big|\, \wh{W}_{t,0} = \wh{W}_{t,t} = 0 \big)$ and $ e_{\cdot,\cdot}(\cdot)$, $q_\cdot (\cdot)$ are defined as before. Then $p_t\big((s,z);(s',z')\big)$ satisfies
\begin{equation}
p_t\big((s,z);(s',z')\big) \leq \pi^{-1} \bigg( \frac{t}{s(s'-s)(t-s')} \bigg)^{1/2}
\exp \Big(-\tfrac{(z+\gamma_{t,s})^2}{2s} - \tfrac{(z'+\gamma_{t,s'})^2}{2(t-s')} \Big) \,,
\end{equation}

As in the bound for $j_{t,v}(s)$, we now use the upper bounds in Lemma~\ref{lem:15} for both $q_s$ and $q_t$ in the integrand, with the ``right'' bound chosen depending on whether $z$ (respectively $z'$) are positive or negative. This bounds $q_s\big((0,0);(s,z)\big) \times q_t\big((s',z');(t,0)\big) \times p_t\big((s,z);(s',z')\big)$ by
\begin{equation}
C t^{-1} s^{-1/2} (s+1)^{-1} (s'-s)^{-1/2} \big(z^- + \rme^{-\frac{3}{2}z^+}\big) \big(z'^- + \rme^{-\frac{3}{2}z'^+}\big) \,,
\end{equation}
where we have used that $t-s' \in [t/2, t]$ and again replaced the $q_s$ term by $1$ if $s \in [0,1]$.

Using now Lemma~\ref{l:18} to bound the ``$e$-terms'' and again the first upper bound in Lemma~\ref{lem:15} for the remaining ``$q$-term'' if $(s'-s) \geq 1$ or otherwise the trivial bound $1$, the double integral in \eqref{eq:as.1.123} is bounded up to a multiplicative factor by
\begin{equation}
\begin{split}
\tfrac{ \rme^{2\sqrt{2} v} (v +  1)^2}{t  (s  +  1) \! (s' \! -  s \!+ 1) \! \sqrt{ \! s (s' \! - s)}}  
 & \! \int_{z,z'} \textstyle \! \Big(\! z^- \! + \! \rme^{-\frac{3}{2}z^+} \! \Big) \! \Big( \! z'^- \!+ \! \rme^{-\frac{3}{2}z'^+} \!\Big)
(\! z^- \! + \! 1 \!)^2( \! z'^- \! + \!1 \!)^2   \\
& \, \ \textstyle \times \Big(\! \rme^{-\frac{(v+z)^2}{4s}} \! + \! \rme^{-\frac{(v+z)}{2}} \! \Big)\!\Big( \!\rme^{-\frac{(v+z')^2}{4s'}}  \! + \! \rme^{-\frac{(v+z')}{2}} \! \Big)\!  \rmd z  \rmd z' .
\end{split}
\end{equation}
The above integral factors into two identical single variable integrals which are again equal to the integral in~\eqref{e:511} when $M=0$. Therefore the bound obtained there applies making the double integral smaller than 
$\big(\rme^{-v^2/(16s)} + \rme^{-v/2} \big)\big(\rme^{-v^2/(16s')} + \rme^{-v/2} \big)$ and the whole last display smaller than the right hand side of~\eqref{e:604}.

Lastly we handle the case $s=s'$ and it is here where we need the second moment bound from Section~\ref{s:Moments}. Again, we write $j_{t,v}(s,s)$ as
\begin{equation}
\int_{z} q_s\big((0,0);(s,z)\big) \times e^{(2)}_{s,v}(z)\times q_t\big((s,z);(t,0)\big) \times p_t(s,z) \rmd z \,,
\end{equation}
where $e^{(2)}_{s,v}(z) := \bbE \big(\cE \big([-(v+z),-z] \big)^2 ;\; \wh{h}^*_s \leq -z \big)$.
We now repeat the argument in the proof of~\eqref{e:603} with $M=0$, only that we use the bound on $e^{(2)}_{s,v}(z)$ from Lemma~\ref{l:6.4} instead of the bound on $e_{s,v}(z)$. This gives as an upper bound on $j_{t,v}(s,s)$,
\begin{equation}
C \, \frac{ 1 }{t \sqrt{s} (s+1)} 
(v+1)^2\rme^{2\sqrt{2} v} \Big(\rme^{-\frac{v^2}{4s}} + \rme^{-\frac{v}{2}}\Big)
\int_z (z^-+1)^2 \big(z^- + \rme^{-\frac{3}{2}z^+} \big) \rme^{\sqrt{2}z} \rmd z \,.
\end{equation}
The last integral is bounded by a constant and thus the whole expression can be made smaller than the right hand side of~\eqref{e:604} if we properly tune the preceding constants.
\end{proof}

Next, we need also asymptotics for $j_{t,v}^{<M}(s)$. This is given in the next lemma
\begin{lem}
\label{l:7.5}
There exists $C > 0$ such that as $t \to \infty$ followed by $v \to \infty$ and then $M \to \infty$,
\begin{equation}
j_{t,v}^{<M}(s) \sim C t^{-1} s^{-3/2} v \rme^{\sqrt{2} v - \frac{v^2}{2s}} \,,
\end{equation}
uniformly in $s \in [\eta v^2, v^2/\eta]$ for any fixed $\eta >0$.
\end{lem}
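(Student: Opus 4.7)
Following the argument used at the start of the proof of Lemma~\ref{l:7.4}, I would condition on $\wh{W}_{t,s}=z$ to write
\begin{equation}
j_{t,v}^{<M}(s) = \int_{|z|<M} q_s\bigl((0,0);(s,z)\bigr)\, e_{s,v}(z)\, q_t\bigl((s,z);(t,0)\bigr)\, p_t(s,z)\,\rmd z,
\end{equation}
where the four factors are as defined there (after the time-horizon invariance allowing us to replace $q_t((0,0);(s,z))$ by $q_s((0,0);(s,z))$). The strategy is to pin down an asymptotic equivalent of each factor, uniformly in $|z|<M$ and $s\in[\eta v^2,v^2/\eta]$, and conclude via dominated convergence on the compact window $[-M,M]$.

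For the two $q$-factors I would invoke~\eqref{e:53}: with $r=0$ it gives $q_s((0,0);(s,z))\sim 2f^{(0)}(0)g(z)/s$ as $s\to\infty$, and applied with $r=s$ it gives $q_t((s,z);(t,0))\sim 2f^{(s)}(z)g(0)/(t-s)\sim 2f^{(s)}(z)g(0)/t$ as $t\to\infty$. The continuity statement~\eqref{e:54} together with monotonicity (Remark~\ref{r:monotonicity}) then makes $f^{(s)}(z)\to f(z)$ uniformly in $z\in[-M,M]$ as $s\to\infty$. For $e_{s,v}(z)$ I would note that on $\{\wh h_s^*\leq -z\}$ we have $\cE_s([-v,0]-z)=\cE_s([-v-z,\infty))$, and apply Lemma~\ref{l:18} with $(v,u,t)$ replaced by $(-v-z,-z,s)$. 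The hypotheses $|u|\leq 1/\epsilon$ and $s^\epsilon<u-v<s^{1-\epsilon}$ are satisfied uniformly in $|z|<M$ and $s\in[\eta v^2,v^2/\eta]$ by choosing $\epsilon<\min(1/2,1/M)$ and taking $v$ large; this yields $e_{s,v}(z)\sim(v+1)\rme^{\sqrt2(v+z)-(v+z)^2/(2s)}g(z)/\sqrt\pi$. Finally, $p_t(s,z)$ is the Gaussian density with mean $-\gamma_{t,s}$ and variance $s(t-s)/t$; as $t\to\infty$ the variance tends to $s$, and since $\gamma_{t,s}=O(\log v)$ while $\sqrt s\geq\sqrt\eta\,v$, one gets $p_t(s,z)\sim(2\pi s)^{-1/2}$ uniformly in $|z|<M$ in the iterated limit.

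Expanding $(v+z)^2/(2s)=v^2/(2s)+vz/s+z^2/(2s)$ with $v/s\leq 1/(\eta v)\to 0$, the factor $\rme^{-(v+z)^2/(2s)}/\rme^{-v^2/(2s)}\to 1$ uniformly in $z\in[-M,M]$, so $\rme^{-v^2/(2s)}$ can be pulled out of the integral. Multiplying the four equivalents gives
\begin{equation}
j_{t,v}^{<M}(s) \sim \frac{v\,\rme^{\sqrt2 v-v^2/(2s)}}{t\,s^{3/2}}\cdot \frac{4f^{(0)}(0)g(0)}{\sqrt{2\pi^3}} \int_{|z|<M} f(z)\,g(z)^2\,\rme^{\sqrt2 z}\,\rmd z,
\end{equation}
uniformly in $s\in[\eta v^2,v^2/\eta]$. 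The integrand $f(z)g(z)^2\rme^{\sqrt2 z}$ is integrable on $\bbR$: at $-\infty$ it is $O(|z|^3\rme^{\sqrt2 z})$ by~\eqref{e:53.5}, and at $+\infty$ both $f$ and $g$ decay like $\rme^{-2(1-\delta)z}$ (the positive-$z$ bound inherited from~\eqref{e:2.12}), so the whole integrand decays exponentially. Hence the integral converges as $M\to\infty$ to a finite positive constant $C^*$, and the claim holds with $C=4f^{(0)}(0)g(0)C^*/\sqrt{2\pi^3}$. The main obstacle I anticipate is obtaining the uniformity in $s$ simultaneously in all three inputs (the $q$-asymptotics, Lemma~\ref{l:18}, and the Gaussian density), which requires careful bookkeeping of the allowed ranges of the $\epsilon$'s; everything else is a routine application of the results already proved in Sections~\ref{s:Tools}--\ref{s:Moments}.
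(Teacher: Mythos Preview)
Your proposal is correct and follows essentially the same route as the paper's proof: the same integral decomposition, the same asymptotics for each factor from Lemmas~\ref{lem:15} and~\ref{l:18}, and dominated convergence on $|z|<M$ leading to $\int_{|z|<M} f(z)g(z)^2\rme^{\sqrt 2 z}\,\rmd z$. Two small remarks: the paper only establishes $f(z)\leq C$ for $z>0$ (not exponential decay), which still suffices since $g(z)^2\leq C\rme^{-3z}$; and for passing to the limit $f^{(s)}\to f$ inside the integral the paper uses dominated convergence with the uniform bound $f^{(s)}(z)\leq C(z^-+1)$ rather than arguing uniform convergence in $z$ from Remark~\ref{r:monotonicity} (which would require continuity of $f$, not established).
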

\begin{proof}
As in the previous lemma, we start by writing $j_{t,v}^{<M}(s)$ as the integral
\begin{equation}
\label{e:418}
j_{t,v}^{<M}(s) = \int_{|z| < M} q_s\big((0,0);(s,z)\big) \times e_{s,v}(z)\times q_t\big((s,z);(t,0)\big) \times p_t(s,z) \rmd z \,,
\end{equation}
with $q_s(\cdot), e_{s,v} (\cdot)$ and $p_t(\cdot)$ defined as before. We now use the corresponding asymptotic results, in place of the upper bounds we have used before, to derive asymptotics for the above integral when the limits are taken in the prescribed order. 

Accordingly, let us first fix $\eta$, $s$, $v$ and $M$ and take $t \to \infty$. 
Conditioned on $\wh{W}_{t,0} = \wh{W}_{t,t} = 0$, the law of $\wh{W}_{t,s}$ is Gaussian with mean $\frac{3}{2\sqrt{2}}\big(s(\log^+ t)/t - \log^+ s\big)$ and variance $s(t-s)/t$. Hence, for all $z$ and $s$ fixed the density $p_t(s,z)$ of $\wh{W}_{t,s}$ tends to 
\begin{equation}
(2\pi s)^{-1/2} \exp \left(-\tfrac{\big(z+\frac{3}{2\sqrt{2}}\log^+s\big)^2}{2s} \right) \qquad \text{as $t \to \infty$},
\end{equation} 
and is bounded by $C s^{-1/2}$ for all $t \geq s/2$ and any $z \in \bbR$ fixed. At the same time, by the third part of Lemma~\ref{lem:15}, we know that $q_t((s,z);(t,0))$ is asymptotic equivalent to $2t^{-1} f^{(s)}(z) g(0)$ as $t \to \infty$. The first upper bound in the same lemma also says that $q_t((s,z);(t,0))$ is smaller than $C (t-s)^{-1}(z^-+1) < 2 C t^{-1}(z^-+1)$ if $t \geq s/2$, which yields $f^{(s)}(z) \leq C(z^-+1)$ for all $s >0, \, z \in \bbR$ and $t$ sufficiently large.
Then, using the dominated convergence theorem, we can replace the quantities in the integrand of~\eqref{e:418} with their asymptotic equivalences and obtain that the integral itself is asymptotic to
\begin{equation}
\label{e:619}
2  \frac{g(0)}{t  \sqrt{2\pi s}} \int_{|z| < M} q_s\big((0,0);(s,z)\big) e_{s,v}(z) f^{(s)}(z) \exp \Big(-\tfrac{\big(z+\frac{3}{2\sqrt{2}}\log^+s\big)^2}{2s} \Big) \rmd z \,,
\end{equation}
when $t \to \infty$ for fixed $s$ and $v$.

Next, we  keep $M$ fixed and take $v \to \infty$. We will consider $s \in [\eta v^2, \eta^{-1}v^2]$, so that $s \to \infty$ as well. Then, by the third part of Lemma~\ref{lem:15} again, we have that for any fixed $z$
\begin{equation}
q_s\big((0,0);(s,z)\big)  \sim 2 \, \frac{f^{(0)}(0)g(z)}{s} 
\qquad \text{as $s \to \infty$},
\end{equation}
with $f^{(0)}(0), g(z) > 0$ from the lemma. Moreover, the upper bounds in the same lemma also show that the left hand side above is smaller than $C s^{-1}(z^-+\rme^{-3z^+/2})$ for all $z$ and $s$. Again, this implies that $g(z) \leq C(z^-+\rme^{-3z^+/2})$ for all $z$ with the constant independent of $s$.
As for $f^{(s)}(z)$, the last part of Lemma~\ref{lem:15} says that $f^{(s)}(z)$ is positive and it tends to $f(z) > 0$ as $s \to \infty$ and since we have established that $f^{(s)}(z) \leq C(z^-+1)$ the same bound applies to the function $f$. 
Finally, we estimate $e_{s,v}(z)$ using Lemma~\ref{l:18} with $u, v, t$ there replaced by $-z, -(v+z)$ and $s$ respectively. Since $|z| \leq M$ and $ \eta \sqrt{s} \leq  v \leq  \eta^{-1} \sqrt{s}$, the conditions of the lemma are satisfied with $\epsilon= 1/M$ and all $s$ large enough, which yields



\begin{equation}
e_{s,v}(z) \sim v  \frac{g(z)}{\sqrt{\pi}}  \exp \Big( \sqrt{2}(v+z) - \tfrac{(v+z)^2}{2s} \Big) 
\sim 
 \Big( v \rme^{\sqrt{2} v - \frac{v^2}{2s}} \Big)  \frac{g(z)}{\sqrt{\pi}} \rme^{\sqrt{2} z} \,,
\end{equation}
when $v \to \infty$ uniformly in $s \in [\eta v^2, \eta^{-1}v^2]$ and $|z| < M$.
Combining all the above and using the dominated convergence theorem again, we see that the integral in~\eqref{e:619} is asymptotic to
\begin{equation}
C   \frac{v \rme^{\sqrt{2} v - \frac{v^2}{2s}}}{s} \int_{|z|<M} \rme^{\sqrt{2}z} f(z) g(z)^2 \rmd z \,,
\end{equation}
as $v \to \infty$ uniformly in $s$ as required and for fixed $M$. Finally, in light of the positivity and upper bounds for $f$ and $g$ the last integral converges when $M \to \infty$ to a positive and finite constant. Collecting all the results together, we finish the proof.
\end{proof}

We can now prove Lemma~\ref{l:7.1} and Lemma~\ref{l:7.2} and thereby complete the proof of Proposition~\ref{p:12}.
\begin{proof}[Proof of Lemma~\ref{l:7.1}]
Fix first $v \geq 0$ and write  $\wt{\bbE}\big(\cC_{t,r_t}^*\big([-v,0]\big) \,\big|\, \wh{h}^*_t = \wh{h}_t(X_t) = 0\big)$ as
\begin{equation}
\frac{\wt{\bbE}\Big(\cC_{t,r_t}^*\big([-v,0]\big) ;\; \wh{h}^*_t \leq 0 \,\Big|\, \wh{h}_t(X_t) = 0\Big)}{\wt{\bbP} \big(\wh{h}^*_t \leq 0 \,\big|\, \wh{h}_t(X_t) = 0\big)} \,.
\end{equation}
An application of Lemma~\ref{l:5.1} with $r=u=w=0$ followed by the third part of Lemma~\ref{lem:15} 
shows that the denominator is asymptotic to $C t^{-1}$ as $t \to \infty$ with $C \in (0,\infty)$. Hence, it remains to treat the numerator.

Now let $M, \eta > 0$, assume $t$ is large enough and use Lemma~\ref{l:7.3} to write the numerator as  
\begin{equation}
\label{e:824}
2\int_{s=\eta v^2}^{\eta^{-1}v^2} j^{< M}_{t,v}(s) \rmd s +
2\int_{s=0}^{r_t} \Big(j_{t,v}(s) 1_{s\in[\eta v^2,  \eta^{-1} v^2]^\rmc} +
j^{\geq M}_{t,v}(s) 1_{s\in[\eta v^2,  \eta^{-1} v^2]}  \Big) \rmd s \,.
\end{equation}
We first want to claim that the second integral becomes negligible when $M \to \infty$ and $\eta \to 0$, in the asymptotic regime we consider. To this end, we observe that $j_{t,v}(s)=j_{t,v}^{\geq 0}(s)$, so the first upper bound in Lemma~\ref{l:7.4} may be used to estimate $j_{t,v}^{\geq M}(s)$ as well as $j_{t,v}(s)$ and bound the second integral above by $C t^{-1} \rme^{\sqrt{2} v} (v+1)$ times
\begin{equation}
\label{e:924}
\begin{split}
& \int_{s=0}^\infty \frac{\rme^{-v^2/16s} + \rme^{-v/2}}{ \sqrt{s} (s+1)}  
\Big(1_{ \{ s \in [\eta v^2, \eta^{-1}v^2]^\rmc \}} + \rme^{-C' M} 1_{\{ s \in [ \eta v^2, \eta^{-1}v^2] \}} \Big) \rmd s \\
& \ 
\leq \int_{0}^\infty  \! \frac{\rme^{-\frac{v}{2}}}{ \sqrt{s} (s\!+\!1)}  \rmd s 
\! +\! \int_{0}^{\eta v^2} \!  \frac{\rme^{-\frac{v^2}{16s}}}{ s^{3/2}}  \rmd s  
\!+ \! \rme^{-C'M} \! \int_{\eta v^2}^\infty  s^{-\frac{3}{2}}  \rmd s 
\!+\! \int_{\frac{v^2}{\eta} }^\infty  s^{-\frac{3}{2}}  \rmd s  \\
& \ \leq 
C \bigg(\rme^{-\frac{v}{2}} + \frac{1}{v}  + \frac{\rme^{-C' M}}{v  \sqrt{\eta} } + \frac{ \sqrt{\eta}}{v} \,  \bigg) \,.
\end{split}
\end{equation}
Therefore the second integral is bounded above by $t^{-1} \rme^{\sqrt{2}v}$ times $\big(\rme^{-v/4} + \rme^{-C' M} \eta^{-1/2} + \sqrt{\eta}\big)$. The latter factor tends to $0$ when $v \to \infty$ followed by $M \to \infty$ and then $\eta \to 0$.

At the same time, thanks to the uniform convergence in Lemma~\ref{l:7.5} we know that as $t \to \infty$ followed by $v \to \infty$ and then $M \to \infty$, the first integral in~\eqref{e:824} is asymptotic equivalent to 
\begin{equation}
C t^{-1} v \rme^{\sqrt{2} v} \int_{s=\eta v^2}^{\eta^{-1}v^2 } s^{-3/2} \rme^{-v^2/2s} \rmd s
= C t^{-1} \rme^{\sqrt{2} v} \int_{y=\eta}^{\eta^{-1}} y^{-3/2} \rme^{-1/2y} \rmd y \,,
\end{equation}
where we have substituted $y=v^2 s$ to obtain the second integral. Taking now $\eta \to 0$, the last integral converges to a constant which is positive and finite.

Combining the estimate on the first integral with the bound on the second shows that the numerator is asymptotically equivalent to $C t^{-1} \rme^{\sqrt{2}v}$ as $t \to \infty$ followed by $v \to \infty$. Together with the $Ct^{-1}$ asymptotics for the denominator, this yields the desired result. 
\end{proof}

Lastly, we provide:
\begin{proof}[Proof of Lemma~\ref{l:7.2}]
As in the proof of Lemma~\ref{l:7.1} we can write the left hand side of~\eqref{e:390} as 
\begin{equation}
\frac{\wt{\bbE}\Big(\big(\cC_{t,r_t}^*([-v,0])\big)^2 ;\; \wh{h}^*_t \leq 0 \,\big|\, \wh{h}_t(X_t) = 0\Big)}{ \wt{\bbP} \big( \wh{h}^*_t\leq 0 \vert  \wh{h}_t(X_t) =0\big) } \,.
\end{equation}
The denominator is asymptotic to $C t^{-1}$ and hence it is enough to show that the expectation in the numerator is bounded above by $Ct^{-1} (v+1)\rme^{2\sqrt{2}v}$ for all $t$ large enough. Again, we can use Lemma~\ref{l:7.3} and Lemma~\ref{l:7.4} to bound this expectation for fixed $v$ and $t$ large enough by $C t^{-1} \rme^{2 \sqrt{2}v} (v+1)^2$ times
\begin{equation}
\int_{s=0}^{\infty} \frac{\rme^{-v^2/16s} + \rme^{-v/4}}{ \sqrt{s} (s+1)} \times
\bigg(1 + \int_{s'=s}^{\infty} \frac{1}{\sqrt{s'-s}(s'-s+1)} \rmd s'\bigg) \rmd s\,.
\end{equation}

The second integral is bounded by a constant uniformly in $s$. The first is bounded by a constant if $v \in [0,1]$ and otherwise, using the substitution $s=v^2 y$, by 
\begin{equation}
C \rme^{-v/4} + v^{-1} \int_{y=0}^\infty y^{-3/2} \rme^{-1/16y} \rmd y 
\leq C (v+1)^{-1} \,.
\end{equation}
All together the expectation in question is bounded above by $C t^{-1} \rme^{2 \sqrt{2}v} (v+1)$ whenever $t$ is large, as we set out to prove.
\end{proof}

\subsection{Proof of Proposition~\ref{prop:right.tail.cluster}}
\begin{proof}[Proof of Proposition~\ref{prop:right.tail.cluster}]
As in the proofs before, by monotonicity it is enough to show that the limit in~\eqref{equation.thm:right.tail.cluster.BBM} holds along $v$'s which are not charged with positive probability by $\cC$. Assuming that $v$ is as such, we use Lemma~\ref{l:7.0} with $u=0$, Lemma~\ref{l:5.1} with $r=u=w=0$ and finally Lemma~\ref{l:5.2} to write $\bbP \big(\cC([-v, 0)) = 0 \big)$ as the limit 
\begin{equation}
\label{e:96B}
\lim_{t \to \infty}  
\frac{\bbP \Big(\max \limits_{k: \sigma_k \in [0,t]} \Big(\wh{W}_{t, \sigma_k} \!+ \! \wh{h}^{\sigma_k*}_{\sigma_k} \! + \!v 1_{[0,r_t]}(\sigma_k) \Big)\! \leq \! 0
 \,\Big|\, \wh{W}_{t,0} \! = \! \wh{W}_{t,t} \! = 0 \Big)}
{\bbP \Big(\max\limits_{k: \sigma_k \in [0,t]} \big(\wh{W}_{t, \sigma_k} + \wh{h}^{\sigma_k*}_{\sigma_k} \big) \leq 0 \,\Big|\, \wh{W}_{t,0} = \wh{W}_{t,t}  = 0 \Big)} \,.
\end{equation}
The denominator is asymptotic to $C t^{-1}$ by the third statement in Lemma~\ref{lem:15} with $v=w=0$ and $r=r_t$. It therefore remains to bound the numerator.

For a lower bound, we follow the heuristics of Brunet and Derrida and restrict the event in the numerator by intersecting with the event that up time $v/2$ there was no branching and that at this time  $\wh{W}_{t,v/2} \leq -v$. Explicitly, we lower bound the numerator in~\eqref{e:96B} by
\begin{equation}
\label{e:97B}
\begin{multlined}
\bbP \big( \sigma_1 > v/2 ,\, \wh{W}_{t,v/2} \leq -v \,\big|\, \wh{W}_{t,0} = \wh{W}_{t,t} = 0 \big) \\
\times
\bbP \Big(\max_{k: \sigma_k \in [0,t]} \big(\wh{W}_{t, \sigma_k} + \wh{h}^{\sigma_k*}_{\sigma_k} +v1_{[0,r_t]}(\sigma_k) \big) \leq 0
 \,\Big|\, \wh{W}_{t,v/2} =-v  ,\,  \wh{W}_{t,t}  = 0 \Big) \,,
\end{multlined}
\end{equation}
where we have used the stochastic monotonicity of the trajectories of $\wh{W}_{t,s}$ with respect to the initial conditions in the second term above. Now $\wh{W}_{t,v/2}$ under $\wh{W}_{t,0} = \wh{W}_{t,t} = 0$ has a Gaussian law with mean $\frac{3}{2\sqrt{2}}\big(v (2t)^{-1} \log^+t -\log^+ (v/2) \big)= -\frac{3}{2\sqrt{2}}\log^+(v/2) + o(1)$ and variance $v(2t-v)/(4t) = v/2 + o(1)$, with both $o(1)$ terms tending to $0$ as $t \to \infty$. At the same time $\sigma_1$ is exponential with rate $2$ and independent of $\wh{W}$. It follows therefore that the first probability in~\eqref{e:97B} will be bounded from below by
\begin{equation}
C \rme^{-v} \tfrac{1}{(v/2)^{1/2}} \exp \Big(-\tfrac{ ( \frac{3}{2\sqrt{2}}\log^+(v/2) +v )^2}{v} \Big) 
\geq C' v^{-\frac{1}{2}} \rme^{-2v} \,,
\end{equation}
for all $t$ large enough.

As for the second probability in~\eqref{e:97B}, using the total probability formula with respect to $W_{t,r_t}$ and recalling the definition of $q_t$ from~\eqref{e:406}, it is at least
\begin{equation}
\begin{split}
\int_{w=-r_t^{2/3}}^{-2v} 
	q_t & \big((v/2, 0);(r_t, w\!+\!v)\big)  \times  q_t\big((r_t, w);(t,0)\big) \\
	&\times \bbP \big(\wh{W}_{t,r_t} \in \rmd w \,\big|\, \wh{W}_{t,v/2} = -v,\, \wh{W}_{t,t} = 0 \big) \,\rmd w .
	\end{split} 
\end{equation}
As we have noticed before (e.g. in the proof of Lemma~\ref{l:7.4}), we can replace $W_{t,u}$ by $W_{r_t,u}$ in the definition of $q_t$, thereby obtaining,
\begin{equation}\label{li.C10}
q_t\big((v/2, 0);(r_t, w+v)\big)  = q_{r_t} \big((v/2, 0);(r_t, w+v)\big)\,.
\end{equation}
Thanks to the asymptotic statement in Lemma~\ref{lem:15}, the right hand side  of \eqref{li.C10} is at least $C r_t^{-1}w^-$ in the above ranges of $v,w$ for all $t$ large enough. The same statement also shows that $q_t\big((r_t, w);(t,0)\big) \geq C' t^{-1}w^-$ under the same conditions.

With the bounds above replacing the corresponding quantities, the last integral is equal to
\begin{equation}
\frac{C}{t r_t } \bbE \Big(\wh{W}_{t,r_t}^2\; ;\; \wh{W}_{t,r_t} \in \big[-(r_t)^{2/3},\, -2v\big] \,\Big|\, \wh{W}_{t,v/2} = -v,\, \wh{W}_{t,t} = 0 \Big) \,.
\end{equation}
Under the conditioning $\wh{W}_{t,r_t}$ is Gaussian with mean and variance given respectively~by
\begin{equation}
\begin{split}
(\gamma_{t,v/2} -v ) &\frac{t-r_t}{t-\tfrac{v}{2}} -\gamma_{t,r_t} = \tfrac{3}{2\sqrt{2}}\big(\log \frac{v}{2} - \log r_t - v+ o(1)\big)  \\
& \quad \text{ and } \quad
\big(r_t - \tfrac{v}{2} \big) \frac{t- r_t}{t-\tfrac{v}{2}} = r_t - \tfrac{v}{2} + o(1) \,,
\end{split}
\end{equation}
with $o(1) \to 0$ as $t \to \infty$. Therefore, for all $t$ large enough the last expectation is at least $C r_t$, making the entire expression bounded below by $C t^{-1}$. Plugging this in~\eqref{e:96B} shows that the numerator is at least $C' t^{-1}v^{-1/2} \rme^{-2v}$ and in light of the asymptotics for the denominator, also that for all $v \geq 1$,
\begin{equation}
\label{e:103B}
\bbP \big(\cC([-v, 0)) = 0 \big) \geq C'v^{-1/2} \rme^{-2v} \,.
\end{equation}

We turn to an upper bound for the numerator of~\eqref{e:96B}. Thanks to Lemma~\ref{l:TailBBMMax}, we know that the lower tails of $\wh{h}^*_t$ decay uniformly in $t \geq 0$. It follows that for any $\epsilon > 0$, there must exists $M > 0$ large enough, such that $\bbP(\wh{h}^*_t < -M) < \epsilon$. Fixing such $\epsilon > 0$ and $M$ and assuming that $v > M$ and that $t \geq r_t^2$, we let 
\begin{equation}
\tau = \inf \big\{s \geq 0 :\: \wh{W}_{t,s} = -v + M\} \wedge v^2\big\} \,.
\end{equation}
Then the numerator in~\eqref{e:96B} conditional on $\tau = s \leq v^2$, is at most 
\begin{equation}
\begin{multlined}
\bbP \Big(\max_{k:\: \sigma_k \in [0,s]} \wh{h}^{\sigma_k*}_{\sigma_k} \leq -M\Big) \\
\times \bbP \Big(\max_{k: \sigma_k \in [s,t]} \big(\wh{W}_{t, \sigma_k} + \wh{h}^{\sigma_k*}_{\sigma_k} \big) \leq 0  \,\Big|\, \wh{W}_{t,s} = -v,\, \wh{W}_{t,t}  = 0 \Big) \,,
\end{multlined}
\end{equation}
where we have used stochastic monotonicity of $W$ with respect to the boundary conditions and independence between $W$, $H$ and $\cN$.

Conditioning on $\cN([0,s])$ and using the fact that $\bbP ( \wh{h}^{\sigma_k*}_{\sigma_k} \leq -M ) < \epsilon$ for each atom $\sigma_k$ of $\cN$ under the conditioning, we may bound the first probability above by $\bbE \epsilon^{\, \cN([0,s])} = \rme^{-2s(1-\epsilon)}$. 
As for the second, using the first upper bound in Lemma~\ref{lem:15}, we see that it is bounded above by $C(v+1)(t-s)^{-1} \leq C'(v+1)t^{-1}$, for all $t$ large enough with $C'$ not depending on $v$.  

At the same time, conditional on $\wh{W}_{t,0} = \wh{W}_{t,t} = 0$ the distribution of $\wh{W}_{t,s}$ is Gaussian with mean $-\gamma_{t,s} = -\frac{3}{2\sqrt{2}}\log^+s + o(1)$ and variance $s(t-s)t^{-1} = s + o(1)$ as $t \to \infty$ with both $o(1)$ tending to $0$ uniformly in $s \leq v^2$. Then, setting $z:= -v + M$, for all $v$ large enough and then $t$ large enough we have
\begin{equation}
\begin{split}
\bbP \big(\tau \! \in \! \rmd s \,\big|\, \wh{W}_{t,0} \!= \! \wh{W}_{t,t}  \!= \!0 \! \big) /\rmd s & 
\leq \bbP \big(\wh{W}_{t,s} \in \rmd z \,\big|\, \wh{W}_{t,0} = \wh{W}_{t,t} =0 \big) / \rmd z   \\ 
& \leq C s^{-1} \exp\Big(-  \tfrac{\big(v - M -\tfrac{3}{2\sqrt{2}}\log^+s\big)^2}{2s}\Big) \\
&\leq C s^{-1} \exp \big(- (1-\epsilon) v^2/(2s) \big) \,, 
\end{split}
\end{equation}
whenever $s < v^2$.

Collecting the above bounds and using the total probability formula, we see that the probability of the event in the numerator of~\eqref{e:96B} is bounded above by
\begin{equation}
C \frac{(v+1)}{t} \Big( \int_{s=0}^{v^2} s^{-1} \rme^{-(1-\epsilon)\big(2s + \frac{v^2}{2s}  \big)} \rmd s 
+ \rme^{-2(1-\epsilon) v^2} \Big) \,. 
\end{equation}
The exponent in the integrand is maximized at $s=v/2$, and its value then is  $-2(1-\epsilon)v$. The last display is therefore at most $C t^{-1}\rme^{-2(1-2\epsilon)v}$ for all $v$ large enough. Together with the asymptotics for the denominator in~\eqref{e:96B}, this shows that for any $\epsilon > 0$ if $v$ is large enough, then
\begin{equation}
\label{e:108B}
\bbP \big( \cC([-v,0)) = 0 \big) \leq C \rme^{-2(1-2\epsilon)v} \,,
\end{equation}

Combining~\eqref{e:103B} with~\eqref{e:108B} shows what we wanted to prove.
\end{proof}

\section{Proofs of Extreme Level Set Theorems}
\label{s:GlobalProperties}

In this section we combine the results concerning cluster properties from the previous section with the law of the limiting generalized extremal process $\wh{\cE}$ to derive asymptotic results for $\cE$. We then use the convergence of the finite time generalized extremal process $\wh{\cE}_t$ to its corresponding limit, to derive asymptotic statements for the extremal level sets of $h$.

\subsection{Structure of Extreme Level Sets}
We start with a lemma that essentially contains the statement of Theorem~\ref{thm.asyden} and Theorem~\ref{t:14}. Recall the definition of $\cE(\cdot ;\; B)$ and $\cE_t(\cdot ;\; B)$ in~\eqref{e:421}.
\begin{lem}
\label{l:8.1}
Let $C_\star$ be as in Proposition~\ref{p:12} and $Z$ be as in~\eqref{e:303}. Then, for all $\alpha \in (0,1]$ as $v \to \infty$, 
\begin{equation}
\label{e:537}
\frac{\cE\big([-v,\infty) ;\; [-\alpha v, \infty) \big)}
{C_\star Z v \rme^{\sqrt{2}v}} \overset{\bbP} \longrightarrow \alpha \,.
\end{equation}
\end{lem}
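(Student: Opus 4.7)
The plan is to follow the heuristic from Subsection~\ref{ss:ProofOutline}: use the explicit Poisson cluster structure of $\cE$ to reduce the claim to a conditional first and second moment calculation, and apply Chebyshev's inequality conditionally on $Z$.

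Concretely, using~\eqref{e:5} and~\eqref{e:N8}, I would write
\[
\cE\big([-v,\infty); [-\alpha v, \infty)\big) = \sum_{(u,\cC) \in \wh{\cE}} \cC\big([-v-u,0]\big)\,1_{\{u \geq -\alpha v\}}.
\]
Next I would pick cutoffs $a_v, b_v \to \infty$ with $a_v = o(v)$ and $\rme^{\sqrt{2}b_v} = o(v)$ (for instance $a_v = b_v = \log\log v$) and let $\wt{\cE}_v$ be the same sum with $u$ further restricted to the window $I_v := [-\alpha v + a_v,\, b_v]$. The first step is to show that the two discarded pieces are negligible. The contribution from $u > b_v$ vanishes in probability because, conditionally on $Z$, $\cE^*([b_v,\infty))$ is Poisson with mean $Z\rme^{-\sqrt{2}b_v}/\sqrt{2}\to 0$, so with probability tending to one there are no such atoms at all. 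The near-boundary piece $u \in [-\alpha v,\, -\alpha v + a_v]$ has conditional mean at most $C Z a_v \rme^{\sqrt{2}v}$ by~\eqref{e:29}, which is $o(Z v\rme^{\sqrt{2}v})$.

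For the bulk term $\wt{\cE}_v$ I would condition on $Z$, under which $\wh{\cE}$ is a Poisson process with intensity $Z\rme^{-\sqrt{2}u}\rmd u\otimes \nu$. Campbell's formula gives
\[
\bbE\big[\wt{\cE}_v \,\big|\, Z\big] = Z \int_{I_v} \bbE\cC\big([-v-u,0]\big)\,\rme^{-\sqrt{2}u}\,\rmd u,
\]
and since $v+u \to \infty$ uniformly over $I_v$, the first moment asymptotic~\eqref{e:29} of Proposition~\ref{p:12} yields $\bbE\cC([-v-u,0]) = (C_\star + o(1))\rme^{\sqrt{2}(v+u)}$, producing $\bbE[\wt{\cE}_v \mid Z] = (1+o(1))\,C_\star\,\alpha\, Z\, v\, \rme^{\sqrt{2}v}$. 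For the variance, the Poisson property gives
\[
\Var\big(\wt{\cE}_v \,\big|\, Z\big) = Z\int_{I_v} \bbE\big[\cC([-v-u,0])^2\big]\,\rme^{-\sqrt{2}u}\,\rmd u,
\]
and inserting the second moment bound~\eqref{e:28} and extracting the dominant contribution near $u = b_v$ produces an upper bound of order $Zv\rme^{2\sqrt{2}v}\rme^{\sqrt{2}b_v}$. Hence $\Var(\wt{\cE}_v\mid Z)/\bbE[\wt{\cE}_v\mid Z]^2 \leq C\rme^{\sqrt{2}b_v}/(Zv)$, which on the event $\{Z > \eta\}$ for any fixed $\eta > 0$ tends to zero with our choice of $b_v$. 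Chebyshev's inequality applied conditionally on $Z$ then delivers concentration of $\wt{\cE}_v$ around its conditional mean, and since $Z > 0$ almost surely, letting $\eta \downarrow 0$ completes the argument.

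The main technical nuisance is making sure that the first moment asymptotic in~\eqref{e:29} integrates to the sharp prefactor $\alpha v$ rather than just to an order of magnitude. I plan to handle this by splitting $I_v$ according to whether $v+u$ is bounded or large: on the bounded part the contribution has length $O(1)$ and total mass $O(Z\rme^{\sqrt{2}v})$, which is lower order, while on the bulk a dominated convergence argument applies with the second moment bound~\eqref{e:28} supplying the required $L^1$ domination. The other delicate point is the joint choice of $a_v$ and $b_v$, which must simultaneously kill the boundary pieces and render the conditional variance of smaller order than the squared conditional mean — a balance achieved by taking $a_v = b_v = \log\log v$.
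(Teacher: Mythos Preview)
Your proposal is correct and follows essentially the same route as the paper: the paper also decomposes according to whether $u$ lies in a window $[-\alpha v + \sqrt{\log v},\,\sqrt{\log v}]$, bounds the near-boundary piece by Markov via~\eqref{e:29}, discards the upper tail because $\cE^*$ eventually has no atoms above the cutoff, and handles the bulk by a conditional Chebyshev argument using~\eqref{e:29} for the mean and~\eqref{e:28} for the variance. Your cutoffs $a_v=b_v=\log\log v$ work just as well as the paper's $\sqrt{\log v}$, and your final paragraph's worry is unnecessary: since $v+u\geq a_v\to\infty$ uniformly on $I_v$, the asymptotic~\eqref{e:29} already applies uniformly and integrates directly to $(C_\star+o(1))\rme^{\sqrt{2}v}|I_v|\sim C_\star\alpha v\rme^{\sqrt{2}v}$ without any further splitting.
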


\begin{proof}
Given $-\infty < -v < w < z \leq \infty$, let us abbreviate 
\begin{equation}
\label{e:43}
\textstyle
F_v (w,z) := \cE \big([-v, \infty) ;\; [w, z] \big) = \sum_{(u, \cC) \in \wh{\cE}} \ \cC \big( [-v - u, 0] \big) 1_{[w, z]}(u)  \,.
\end{equation}
Since conditional on $Z$ the law of $\wh{\cE}$ is that of a Poisson point process whose intensity factorizes (see \eqref{e:N7}), we can write
\begin{gather}
\bbE \big( F_v(w,z) \,\big|\, Z \big) = \int_w^z \bbE \cC \big([-v-u, 0]\big) Z \rme^{-\sqrt{2} u} \rmd u \,, \\
\Var \big( F_v(w,z) \,\big|\, Z \big) = \int_w^z \bbE \big(\cC \big([-v-u, 0]\big)\big)^2 Z \rme^{-\sqrt{2} u} \rmd u \,, 
\end{gather}
with $\cC$ distributed according to $\nu$. Using then Proposition~\ref{p:12}, observing that the right hand side in the first statement of the proposition can be made into an upper bound, albeit with a different constant, we then get
\begin{align}
\label{e:41}
\bbE \big( F_v(w,z) \,\big|\, Z \big)
& \leq C \int_w^z \rme^{\sqrt{2}(v+u)} Z \rme^{-\sqrt{2} u} \rmd u 
= C Z \rme^{\sqrt{2}v} (z-w) \,, \\
\label{e:42}
\Var \big( F_v(w,z) \,\big|\, Z \big)
&  \leq C \int_w^z (v+u) \rme^{2\sqrt{2}(v+u)} Z \rme^{-\sqrt{2} u} \rmd u  \\
& \leq C' Z \rme^{2 \sqrt{2} v + \sqrt{2} z} (z+v), \notag ,
\end{align}
which is valid for all $v,w,z$ as above. Moreover,
\begin{equation}
\label{e:43.5}
\bbE \big( F_v(w,z) \,\big|\, Z \big)
\sim C_\star Z \rme^{\sqrt{2}v} (z-w) \,,\quad\mbox{as $w+v \to \infty$ and uniformly in $z$.}
\end{equation}

Now given $\alpha$ as in the conditions of the Proposition and $v \geq 1$, let us set $w = -\alpha v$, $u = -\alpha v + \sqrt{\log v}$ and $z = \sqrt{\log v}$ and write
\begin{equation}
\label{e:44.5}
F_v(w,\infty) = F_v(w,u) + F_v(u,z) + F_v(z,\infty) \,.
\end{equation}
For the first term, we obtain from~\eqref{e:41} that 
\begin{equation}
\frac{\bbE \big( F_v(w,u) \,\big|\, Z \big)}{Zv\rme^{\sqrt{2}v}}
\leq C \frac{\sqrt{\log v}}{v} \, \overset{v \to \infty} \longrightarrow 0 \,,
\quad \text{for all $v \geq 1$,}
\end{equation}
which implies by Markov's inequality that  
$F_v(w,u) / \big(Zv\rme^{\sqrt{2}v}\big)$ 
converges to $0$ as $v \to \infty$ in $\bbP(\cdot|Z)$-probability for almost every $Z$ and hence that $F_v(w,u) / \big(Zv\rme^{\sqrt{2}v}\big)$ converge to $0$ in $\bbP$-probability. 
As for the second term in~\eqref{e:44.5}, we use respectively~\eqref{e:43.5} and~\eqref{e:42} to obtain 
\begin{equation}
\begin{split}
& \frac{ \bbE \big( F_v(u,z) \,\big|\, Z \big) }{C_\star Z v \rme^{\sqrt{2}v} } 
\sim \frac{z-u}{v} \,  \sim \alpha  \qquad \text{as $v \to \infty$};\\
& \qquad \text{and} \qquad
\frac{ \Var \big( F_v(u,z) \,\big|\, Z \big) }{ \bbE \big( F_v(u,z) \,\big|\, Z \big)\big)^2 }
\leq \frac{C \rme^{\sqrt{2} z}}{Z} \frac{ (z+v) }{ (z-u)^2} 
\, \overset{v \to \infty} \longrightarrow 0 \,.
\end{split}
\end{equation}
Chebyshev's inequality then shows that $\big(C_\star Zv\rme^{\sqrt{2}v}\big)^{-1} F_v(u,z)$ tends to $\alpha$ as $v \to \infty$ in $\bbP(\cdot\,|\,Z)$-probability for almost every $Z$ and hence that 
\begin{equation}
 \big(C_\star Zv\rme^{\sqrt{2}v}\big)^{-1} F_v(u,z)   \to \alpha \quad \text{in $\bbP$-probability as $v \to \infty$}.
\end{equation}

Lastly, for the third term in~\eqref{e:44.5}, observe that whenever $\wh{\cE}([z, \infty) ) = 0$, we also have $F_v(z,\infty) = 0$. Since conditional on $Z$, the intensity measure governing the law of $\wh{\cE}$ is finite on $[0,\infty)$ almost surely, the latter must happen for large enough $z$. This shows that $F_v(z,\infty) \, \overset{v \to \infty} \longrightarrow 0$ almost surely and in particular that 
\begin{equation}
\label{e:52.5}
F_v(z,\infty) \big(Zv\rme^{\sqrt{2}v}\big)^{-1} \longrightarrow 0
\qquad \text{as $v \to \infty$ in $\bbP$-probability.}
\end{equation}
Combining the convergence results for the three terms in the left hand side of~\eqref{e:44.5} shows that
$F_v(w,\infty)  \big(C_\star Zv\rme^{\sqrt{2}v}\big)^{-1}$ converges in $\bbP$-probability to $\alpha$ as $v \to \infty$. Since $F_v(w,\infty)$ is precisely the left hand side of~\eqref{e:37}, the proof is complete.
\end{proof}

The proof of Theorem~\ref{thm.asyden} and Theorem~\ref{t:14} are now straightforward.
\begin{proof}[Proof of Theorem~\ref{thm.asyden}]
The first part of the Theorem has been already proved in Lemma~\ref{l:8.1} with $\alpha = 1$, keeping in mind~\eqref{e:N8}. For the second part, observe that the joint convergence of $(\wh{\cE}_t, Z_t)$ to $(\wh{\cE}, Z)$ together with the almost sure convergence of $Z_t$ to $Z$, shows that $(\wh{\cE}_t, Z)$ also converges jointly weakly to $(\wh{\cE}, Z)$. Moreover, for any $v \geq 0$ and a Borel set $B \subseteq \bbR $, the map $\wh{\cE} \mapsto \cE\big([-v,\infty) ;\; B\big)$ is continuous in the underlying topology for almost every $\wh{\cE}$. This is because $\wh{\cE}$ has a conditional Poissonian law with a product intensity measure, of which the first coordinate is absolutely continuous with respect to Lebesgue.  

Since $Z$ is almost surely positive, the latter implies that  for all $v \geq 0$,
\begin{equation}
\frac{\cE_t\big([-v,\infty) ;\; [-v, \infty] \big)}{C_\star Zv\rme^{\sqrt{2}v}}
\ \overset{t \to \infty}\Longrightarrow \ 
\frac{\cE \big([-v,\infty) ;\; [-v, \infty]  \big)}{C_\star Zv\rme^{\sqrt{2}v}} \,.
\end{equation}
The numerator on the right hand side is exactly $\cE([-v,\infty))$ in light of~\eqref{e:N8}. For the left hand side, the asymptotic separation of extreme values as manifested in~\eqref{e:108} shows that we can replace the numerator with $\cE_t\big([-v, \infty)\big)$ with the convergence still holding. This together with the first statement of the theorem yields the desired result.
\end{proof}

\begin{proof}[Proof of Theorem~\ref{t:14}]
The first part is again an immediate consequence of Lemma~\ref{l:8.1}. Just divide both numerator and denominator by $C_\star Z v \rme^{\sqrt{2} v}$ for $v \geq 1$, recalling that $Z$ is almost surely positive. Then take $v \to \infty$ and use Lemma~\ref{l:8.1} with the given $\alpha$ for the numerator and $\alpha=1$ for the denominator. Using also relation~\eqref{e:N6}, this gives~\eqref{e:37}.

As for the second part, the same argument as in the previous proof shows that
the numerator and denominator in~\eqref{e:624} converge weakly jointly to the numerator and denominator of~\eqref{e:37} respectively. This together with the first part shows the second part of the theorem. 
\end{proof}

\subsection{Distance to the Second Maximum}
Finally, let us prove the theorem concerning the distance to the second maximum.
\begin{proof}[Proof of Theorem~\ref{t:2.9}]
Starting with the first statement and assuming that $\cE$ is realized as in~\eqref{e:5}, we have
\begin{equation}
\label{e:185}
\big \{ v^1 - v^2 > w \big\} = \big\{u^1 - u^2 > w\big\} \cap \big\{\cC^1([-w, 0)) = 0 \big\} \,.
\end{equation}
Since the cluster decorations are independent of the ``backbone'' Poisson point process $\cE^*$, the two events on the right hand side are independent and hence
\begin{equation}
\label{e:185B}
\bbP \big( v^1 - v^2 > w \big) = \bbP \big(u^1 - u^2 > w\big) \bbP \big(\cC([-w, 0)) = 0 \big) \,.
\end{equation}

Now, to compute the first probability on the right hand side, notice that we can rewrite the intensity measure in the law of $\cE^*$ as $\rme^{-\sqrt{2} (u - (\log Z)/\sqrt{2})} \rmd u$. This recasts $\cE^*$ as a randomly shifted Poisson point process with intensity measure $\rme^{-\sqrt{2}u} \rmd u$.  This random shift is irrelevant for the quantity $u^1 - u^2$ and hence we may even assume that $Z = 1$.

In this case, by conditioning on $u^1$ we can write the probability that $u^1 - u^2 > w$ as 
\begin{multline}
\label{e:M6.25}
\int_{u=-\infty}^\infty \rme^{-\sqrt{2} u -\frac{1}{\sqrt{2}}\rme^{-\sqrt{2} u}}
\exp\Big(- \tfrac{\rme^{-\sqrt{2}u}(\rme^{-\sqrt{2} w} - 1)}{\sqrt{2}} \Big)\rmd u \\
= \rme^{- \sqrt{2} u} \int_{z=-\infty}^\infty \rme^{-\sqrt{2}z -\frac{1}{\sqrt{2}} \rme^{-\sqrt{2}z}} \rmd z \,,
\end{multline}
where we have used the substitution $z=u-w$. The integral converges to a finite positive constant showing that $\bbP\big(u^1 - u^2 > w\big) = C \rme^{-\sqrt{2} w}$.

Therefore, taking the logarithm of both sides in~\eqref{e:185B}, dividing by $w$ and letting $w \to \infty$, the first term converges to $-\sqrt{2}$ in light of what we have just proved, while the second converges to $-2$ in light of Proposition~\ref{prop:right.tail.cluster}. The two together show the first part of the theorem.

For the second part of the theorem, first in light of the tightness of the maximum the joint distribution of the first and second highest points of $\cE_t$ converge weakly to the distribution of $v^1$ and $v^2$. It follows then by the continuous mapping theorem that the distribution of $h_t^* - h_t^{*(2)}$ converges weakly to the distribution of $v^1 - v^2$. This shows~\eqref{e:33} when $w \to \infty$ along continuity points of the distribution of $v^1 - v^2$. The extension to any $w$ follows by monotonicity following arguments similar to the ones used in the proofs before.
\end{proof}

\section*{Acknowledgements} 
The authors would like to thank Anton Bovier and the Institute for Applied Mathematics at Bonn University, as well as Dima Ioffe and the Technion - Israel's institute of technology, for providing a wonderful environment for this collaborative work.

\bibliographystyle{abbrv}
\bibliography{GBBStructure}

\end{document}